\numberwithin{equation}{subsection}
\theoremstyle{plain}
\newtheorem{thm}[subsection]{Theorem}
\newtheorem{prop}[subsection]{Proposition}
\newtheorem{lemma}[subsection]{Lemma}
\theoremstyle{definition}
\theoremstyle{remark}
\newtheorem{rem}[subsection]{Remark}
\newtheorem{final remark}[subsection]{Final Remark}
\newcommand*\bigcdot{\mathpalette\bigcdot@{.65}}
\newcommand*\bigcdot@[2]{\mathbin{\vcenter{\hbox{\scalebox{#2}{$\m@th#1\bullet$}}}}}
\begin{document}
\title{Hodge-Witt decomposition of relative crystalline cohomology}
\author{Oliver Gregory and Andreas Langer}
\begin{abstract}
For a smooth and proper scheme over an artinian local ring with ordinary reduction over the perfect residue field we prove - under some general assumptions - that the relative de Rham-Witt spectral sequence degenerates and the relative crystalline cohomology, equipped with its display structure arising from the Nygaard complexes, has a Hodge-Witt decomposition into a direct sum of (suitably Tate-Twisted) multiplicative displays. As examples our main results include the cases of abelian schemes, complete intersections, surfaces, varieties of K3 type and some Calabi-Yau $n$-folds. 
\end{abstract}
\address{Laver Building, University of Exeter, Exeter, EX4 4QF, Devon, UK}
\email {o.b.gregory@exeter.ac.uk}
\address{Harrison Building, University of Exeter, Exeter, EX4 4QF, Devon, UK}
\email{a.langer@exeter.ac.uk}
\date{July 13, 2021 \\ This research was supported by EPSRC grant EP/T005351/1}
\maketitle
\pagestyle{myheadings}

\section{Introduction}

We fix an artinian local ring $R$ with perfect residue field $k$ of characteristic $p>0$. Let $X$ be a smooth proper scheme over $\mathrm{Spec}\,R$. Under some general assumptions on $X$, we proved in \cite{LZ07} and \cite{GL20} that the crystalline cohomology $H_{\rm{cris}}^{i}(X/W(R))$ is equipped with the structure of a higher display, with divided Frobenius maps arising from canonical maps on the Nygaard filtration of the relative de Rham-Witt complex $W\Omega_{X/R}^{\bullet}$. Moreover, if the closed fibre $X_{k}$ of $X$ is an ordinary K3 surface, we proved in \cite{LZ19} that the relative de Rham-Witt spectral sequence
\begin{equation}\label{ss}
E_{1}^{i,j}=H^{j}(X,W\Omega_{X/R}^{i})\Rightarrow\mathbb{H}^{i+j}(X,W\Omega_{X/R}^{\bullet})
\end{equation}
degenerates at $E_1$, giving rise to a Hodge-Witt decomposition of $H^{2}_{{\rm cris}}(X/W(R))$, with its display structure, into a direct sum of displays associated to the formal Brauer group, its twisted dual and the \'{e}tale part of the extended Brauer group.

In this paper we extend this result and produce new examples of the Hodge-Witt decomposition of relative crystalline cohomology. In the following, let $X$ be a smooth proper scheme over ${\rm Spec}\,R$ of relative dimension $d$ satisfying the following assumption: 

There exists a compatible system of smooth liftings $X_{n}/{\rm Spec}\,W_{n}(R)$, $n\in\mathbb{N}$, of $X$ such that the following properties hold:
\\
\begin{itemize}
\item[(A1)] The cohomology $H^{j}(X_{n},\Omega_{X_{n}/W_{n}(R)}^{i})$ is a free $W_{n}(R)$-module for each $i,j$.
\item[(A2)] For each $n$, the Hodge-de Rham spectral sequence
\begin{equation*}
E_{1}^{i,j}=H^{j}(X_{n},\Omega_{X_{n}/W_{n}(R)}^{i})\Rightarrow\mathbb{H}^{i+j}(X_{n},\Omega_{X_{n}/W_{n}(R)}^{\bullet})=H_{\rm{dR}}^{i+j}(X_{n}/W_{n}(R))
\end{equation*}
degenerates at $E_1$.
\end{itemize}
\begin{rem}
We make the following remarks about the assumptions (A1) and (A2): \begin{itemize}
\item[-] The conditions (A1) and (A2) are satisfied in the case of abelian schemes by \cite[Proposition 2.5.2]{BBM82}, for K3 surfaces by ``cohomology and base change'' \cite[II. 5]{Mum70} and the simplicity of the Hodge diamond, and for smooth relative complete intersections by \cite[Theorem 1.5]{Del73}.  
\item[-] Suppose that $pR=0$, i.e. $X$ is a purely equicharacteristic deformation of $X_{k}$. If $\dim(X_{n}/W_{n}(R))<p$ then Ogus' generalisation of the Deligne-Illusie $W_{2}$-lifting theorem shows that the assumption that $X_{n}$ has a smooth lifting $X_{n+1}$ over $\mathrm{Spec}\,W_{n+1}(R)$ implies both (A1) and (A2) for $X_{n}$. Indeed, let $f_{n}:X_{n}\rightarrow\mathrm{Spec}\,W_{n}(R)$ be the structure morphism. Then by \cite[Theorem 8.2.6]{Ogu94}, the filtered complex $(Rf_{n\ast}\Omega_{X_{n}/W_{n}(R)}^{\bullet},Rf_{n\ast}\Omega_{X_{n}/W_{n}(R)}^{\geq\bullet})$ is a ``Fontaine-complex'', and hence (A1) and (A2) follow from \cite[Corollary 5.3.7]{Ogu94}.
\item[-] Suppose again that $pR=0$. If $X_{n}$ admits a Frobenius lifting, i.e. a morphism $X_{n}\rightarrow X_{n}$ of $W_{n}(R)$-schemes which lifts the absolute Frobenius on $X_{n}\times_{\mathrm{Spec}\,\mathbb{Z}/p^{n}}\mathrm{Spec}\,\mathbb{Z}/p$, then \cite[Theorem 1.1 and Corollary 2.3]{Nak97} shows that we do not need the restriction on dimension appearing in the previous remark. Note however that the existence of a Frobenius lift is a very strong condition.
\end{itemize}
\end{rem}

We shall also assume that the closed fibre $X_{k}$ of $X$ satisfies the following assumption:
\\
\begin{itemize}
\item[(A3)] $X_{k}$ has a smooth versal deformation space.
\end{itemize}
\

We say that $X$ admits a Hodge-Witt decomposition of $H^{s}_{\rm{cris}}(X/W(R))$ as displays if the relative Hodge-Witt spectral sequence \eqref{ss} degenerates at $E_{1}$ and if there exists a direct sum decomposition
\begin{equation}
H^{s}_{\rm{cris}}(X/W(R))=\bigoplus_{i+j=s}H^{i}(X,W\Omega_{X/R}^{j})
\end{equation}
on which $H^{s}_{\rm{cris}}(X/W(R))$ is equipped with the display structure arising from the Nygaard complexes, and each $H^{i}(X,W\Omega_{X/R}^{j})$ is equipped with the $(-j)$-fold Tate twist of a multiplicative display structure induced by the Frobenius $F$ on $W\Omega_{X/R}^{j}$ such that
\begin{align*}
W(R)\otimes_{F,W(R)} & H^{i}(X,W\Omega_{X/R}^{j})\rightarrow H^{i}(X,W\Omega_{X/R}^{j}) \\
& x\otimes m \hspace{5mm}\mapsto\hspace{5mm}xFm
\end{align*}
is an isomorphism. Note that the crystalline Frobenius on $H^{s}_{\rm{cris}}(X/W(R))$ induces the map $p^{j}F$ on $H^{i}(X,W\Omega_{X/R}^{j})$. In the case $R=k$ a perfect field, the above is the Hodge-Witt or slope decomposition. 

Recall from \cite[Definition 7.2]{BK86} and \cite[Definition 4.12]{IR83} that a complete variety $X$ over a perfect field $k$ of characteristic $p>0$ is called ordinary if $H^{i}(X,B\Omega^{j}_{X/k})=0$ for all $i\geq 0$ and $j>0$, where $B\Omega^{j}_{X/k}=d\Omega_{X/k}^{j-1}$. This coincides with the usual definition of ordinary for abelian varieties \cite[Lemma 6.2]{vdGK03}, and coincides with having height one formal Brauer group for K3 surfaces \cite[Lemma 1.3]{Nyg83}. By \cite[Th\'{e}or\`{e}me 4.13]{IR83}, ordinary varieties admit a slope decomposition. In this article we consider the generalisation of this to families over $R$. We prove the following:
\begin{thm}\label{surfaces}
Suppose that $k$ is algebraically closed. Let $X$ be a smooth projective surface over $\mathrm{Spec}\,R$ with geometrically connected fibres, and admitting a compatible system of liftings $X_{n}/{\rm Spec}\,W_{n}(R)$ satisfying (A1), (A2), (A3) and such that the closed fibre $X_{k}$ is ordinary. Let $p\geq 3$. Then $X$ admits a Hodge-Witt decomposition of $H^{s}_{\rm{cris}}(X/W(R))$ for all $0\leq s\leq 4$.
\end{thm}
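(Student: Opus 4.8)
The plan is to handle the cohomological degrees one at a time, cutting the range in half by Poincar\'{e} duality and then assembling the decomposition slope-by-slope from the ordinary structure on the closed fibre $X_{k}$. First I would reduce to $s\in\{0,1,2\}$. Crystalline Poincar\'{e} duality furnishes a perfect pairing $H^{s}_{\mathrm{cris}}(X/W(R))\times H^{4-s}_{\mathrm{cris}}(X/W(R))\to H^{4}_{\mathrm{cris}}(X/W(R))$, the latter being the $(-2)$-fold Tate twist of the unit multiplicative display, compatibly with the crystalline Frobenius and with the display structures of \cite{GL20}; under this pairing the Hodge-Witt piece $H^{i}(X,W\Omega^{j}_{X/R})$ is identified with the $W(R)$-dual of $H^{2-i}(X,W\Omega^{2-j}_{X/R})$. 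Since $j+(2-j)=2$ matches the total twist, the $(-j)$-fold Tate twist of a multiplicative display on one piece is dual to the $(-(2-j))$-fold twist of a multiplicative display on the other, so degeneration and a decomposition for $s\leq 2$ propagate to $s=3,4$. The case $s=0$ is immediate: geometric connectedness of the fibres gives $H^{0}_{\mathrm{cris}}(X/W(R))=H^{0}(X,W\mathcal{O}_{X})=W(R)$ with its unit multiplicative display.

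Next I would prove degeneration of \eqref{ss} at $E_{1}$ by a rank count descended from $X_{k}$. Ordinarity of $X_{k}$ and \cite[Th\'{e}or\`{e}me 4.13]{IR83} give the slope decomposition of $H^{*}_{\mathrm{cris}}(X_{k}/W(k))$, the vanishing of the dominoes, and the equalities $\dim_{W(k)}H^{j}(X_{k},W\Omega^{i}_{X_{k}/k})=h^{i,j}$. I would lift these finiteness and rank statements to the relative setting, so that each $H^{j}(X,W\Omega^{i}_{X/R})$ is free over $W(R)$ of rank $h^{i,j}$; on the other hand the crystalline--de Rham comparison identifies $H^{s}_{\mathrm{cris}}(X/W(R))$ with $\varprojlim_{n}H^{s}_{\mathrm{dR}}(X_{n}/W_{n}(R))$, which by (A1) and (A2) is free of rank $\sum_{i+j=s}h^{i,j}$. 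Equality of $\sum_{i+j=s}\operatorname{rank}E_{1}^{i,j}$ with $\operatorname{rank}H^{s}_{\mathrm{cris}}(X/W(R))$ then forces all differentials to vanish and yields the direct sum decomposition.

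It remains to equip each summand with the $(-j)$-fold Tate twist of a multiplicative display, that is, to show that $F$ linearizes to an isomorphism $W(R)\otimes_{F,W(R)}H^{i}(X,W\Omega^{j}_{X/R})\xrightarrow{\sim}H^{i}(X,W\Omega^{j}_{X/R})$. For the slope-zero pieces $H^{i}(X,W\mathcal{O}_{X})$ (relevant to $s=1,2$) this is the unit-root condition, holding because ordinarity makes Frobenius bijective on $H^{i}(X_{k},\mathcal{O}_{X_{k}})$ and this bijectivity lifts over $W(R)$; the top-slope pieces follow by the duality of the first paragraph. For the middle pieces $H^{0}(X,W\Omega^{1}_{X/R})$ and $H^{1}(X,W\Omega^{1}_{X/R})$ I would use the ordinarity vanishing $H^{i}(X_{k},B\Omega^{j}_{X_{k}/k})=0$, which makes the Cartier operator induce isomorphisms on the relevant cohomology and hence $F$ a unit on the associated Hodge-Witt cohomology. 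Assumption (A3) enters here to guarantee that the resulting slope decomposition on $X_{k}$ genuinely deforms over $R$ to a display, rather than merely to a module with a Frobenius.

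The main obstacle I anticipate is the final compatibility, namely that the display structure induced on $H^{s}_{\mathrm{cris}}(X/W(R))$ by the Nygaard complexes restricts on each Hodge-Witt summand to exactly the $(-j)$-fold Tate twist of the multiplicative display, with crystalline Frobenius acting as $p^{j}F$. This amounts to showing that the Nygaard filtration splits along the slope decomposition, so that the divided Frobenius $F_{r}=p^{-r}F$ on the $r$-th step restricts to $p^{j-r}F$ on the slope-$j$ piece and recovers the honest $F$ there after the appropriate twist. I expect this to be where ordinarity is used most essentially and where the hypothesis $p\geq 3$ is required, carrying out in every cohomological degree the analysis performed for ordinary K3 surfaces in \cite{LZ19}.
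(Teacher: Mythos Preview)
Your outline has the right endpoints (Poincar\'e duality, the trivial case $s=0$, and the need to match the Nygaard display structure), but the middle contains a genuine gap. In your second paragraph you write that you ``would lift these finiteness and rank statements to the relative setting, so that each $H^{j}(X,W\Omega^{i}_{X/R})$ is free over $W(R)$ of rank $h^{i,j}$.'' This is exactly the content of the theorem, and there is no base-change theorem for relative Hodge--Witt cohomology over an artinian $R$ that hands it to you. The paper's actual mechanism (Proposition~\ref{HW} and Lemma~\ref{(iii)}) is: first prove, for each fixed $(i,j)$, that $V$ is injective on $H^{i}(X,W\Omega^{j}_{X/R})$ with cokernel $H^{i}(X,\Omega^{j}_{X/R})$, via diagram chases comparing the Nygaard complexes $N^{r}W\Omega^{\bullet}$ with the filtered de Rham complexes $\mathcal{F}^{r}\Omega^{\bullet}$ (this is where Theorem~\ref{filtered comparison} and the maps $\alpha_{r}$, $\tensor[^F]{\alpha}{_r}$ of Section~3 are used); then construct an explicit multiplicative display $P$ together with an $F$-equivariant map $\varsigma:P\to H^{i}(X,W\Omega^{j}_{X/R})$; and finally show that $\overline{\varsigma}$ is an isomorphism modulo $I_{R}$ (by reduction to $R=k$), which Lemma~\ref{(iii)} promotes to an isomorphism $\varsigma$. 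Your rank-count proposal bypasses all three steps without replacing them.

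Moreover, even granting $E_{1}$-degeneration and freeness, you would only obtain a \emph{filtration} of $H^{s}_{\mathrm{cris}}(X/W(R))$ with the correct graded pieces; a $W(R)$-module splitting exists but is not canonical and has no reason to be compatible with the crystalline Frobenius or the divided Frobenii $F_{r}$. The paper produces the splitting not from the spectral sequence but by identifying each Hodge--Witt group with the Dieudonn\'e module of a concrete $p$-divisible (formal) group: $H^{1}(X,W\mathcal{O}_{X})\simeq\mathbb{D}(\widehat{\mathrm{Pic}}_{X/R})$ and $H^{2}(X,W\mathcal{O}_{X})\simeq\mathbb{D}(\widehat{\mathrm{Br}}_{X/R})$ via rigidity of $\hat{\mathbb{G}}_{m}$ (this is where $k$ algebraically closed is used), $H^{0}(X,W\Omega^{1}_{X/R})\simeq\mathbb{D}(\mathrm{Pic}_{X/R}(p)^{\mathrm{\acute{e}t}})$ via the Albanese map, $H^{1}(X,W\Omega^{1}_{X/R})\simeq\mathbb{D}(\Phi^{\mathrm{\acute{e}t}}_{X/R})$ via the extended Brauer group as in \cite{LZ19}, and $H^{2}(X,W\Omega^{1}_{X/R})\simeq\mathbb{D}(\mathcal{A}lb(p)^{0})$. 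These identifications simultaneously supply the multiplicative display structure and exhibit the summands inside $H^{s}_{\mathrm{cris}}$. Poincar\'e duality is used only for $s=3$ after $s=1$ is done this way, and (A3) enters to check a commutativity (of $F_{1}$ versus $F$) by passing to the versal deformation where $p$ is a non-zero-divisor.
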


\begin{rem}
For $s=1$ the decomposition coincides with the decomposition of the display associated to the $p$-divisible group $\mathrm{Pic}(X)(p)$ over $R$ of the reduced Picard scheme into connected and \'{e}tale part (see \eqref{direct sum bbm} and the subsequent discussion).  The case $s=3$ is obtained by Poincar\'{e}/Cartier duality and yields the decomposition of the display associated to the $p$-divisible group of the Albanese scheme $\mathrm{Alb}_{X}$. For $s=2$ we get the same decomposition as for ordinary K3 surfaces.
\end{rem}

The main new examples, apart from abelian surfaces which are also covered in the next theorem, are smooth complete intersections in a projective space over $R$ of dimension $2$. They satisfy assumptions (A1), (A2) by \cite[Thm 1.5]{Del73} and are generically ordinary \cite[Th\'{e}or\`{e}me 0.1]{Ill90}. It is well known that the deformations of a complete intersection are unobstructed (see e.g. \cite[Theorem 9.4]{Har10}), and hence satisfy (A3).

\begin{thm}\label{abelian}
Let $A$ be an abelian scheme over $\mathrm{Spec}\,R$ with ordinary closed fibre, such that $\dim A=d<p$. Then $A$ admits a Hodge-Witt decomposition of $H^{s}_{\rm{cris}}(A/W(R))$ in all degrees $0\leq s\leq 2d$.
\end{thm}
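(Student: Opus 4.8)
The plan is to reduce the assertion in all degrees $0\le s\le 2d$ to the single case $s=1$, exploiting that the cohomology of an abelian scheme is the exterior algebra on its first cohomology. First I would check that $A$ meets the standing hypotheses: a compatible system of smooth liftings $A_{n}/\mathrm{Spec}\,W_{n}(R)$ exists by the unobstructedness of abelian-scheme deformations along the nilpotent thickenings $\mathrm{Spec}\,R\hookrightarrow\mathrm{Spec}\,W_{n}(R)$; (A1) and (A2) hold by \cite[Proposition 2.5.2]{BBM82}; (A3) holds since the deformation functor of an abelian variety is smooth; and the hypothesis $d<p$ guarantees that the Nygaard/de Rham--Witt display formalism of \cite{LZ07, GL20} is available throughout the relevant range of degrees.

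For the base case $s=1$, the module $H^{1}_{\mathrm{cris}}(A/W(R))$ is the Dieudonn\'{e} crystal of the $p$-divisible group $G=A[p^{\infty}]$, with its crystalline Frobenius. Since $A_{k}$ is ordinary, over the (henselian) base $R$ the connected--\'{e}tale sequence $0\to G^{\circ}\to G\to G^{\mathrm{et}}\to 0$ has $G^{\circ}$ of multiplicative type (its Cartier dual lifts an \'{e}tale group, hence is \'{e}tale) and $G^{\mathrm{et}}$ \'{e}tale. Through the de Rham--Witt slope spectral sequence I would identify the slope-$0$ part $H^{1}(A,W\mathcal{O}_{A})$ with the unit-root display attached to $G^{\mathrm{et}}$ and the slope-$1$ part $H^{0}(A,W\Omega^{1}_{A/R})$ with the display attached to $G^{\circ}$; both are free of rank $d$. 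The \'{e}tale/multiplicative-type dichotomy shows precisely that the de Rham--Witt Frobenius $F$ is a $\sigma$-linear isomorphism on $H^{1}(A,W\mathcal{O}_{A})$, so it is a multiplicative display, while $H^{0}(A,W\Omega^{1}_{A/R})$ is the $(-1)$-fold Tate twist of a multiplicative display. A rank count, $\mathrm{rk}\,H^{1}_{\mathrm{cris}}=2d=d+d$, forces degeneration of the slope spectral sequence in total degree one and, by freeness, splits the two-step filtration as $W(R)$-modules, with the crystalline Frobenius inducing $p^{j}F$ on the slope-$j$ graded piece. This is the Hodge--Witt decomposition for $s=1$.

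To bootstrap, I would use that cup product identifies $H^{\bullet}_{\mathrm{cris}}(A/W(R))$ with the exterior algebra $\bigwedge^{\bullet}H^{1}_{\mathrm{cris}}(A/W(R))$, compatibly with crystalline Frobenius and the Nygaard display structure, and likewise that $H^{i}(A,W\Omega^{j}_{A/R})=\bigwedge^{i}H^{1}(A,W\mathcal{O}_{A})\otimes_{W(R)}\bigwedge^{j}H^{0}(A,W\Omega^{1}_{A/R})$. Writing $M_{0}=H^{1}(A,W\mathcal{O}_{A})$ and $M_{1}=H^{0}(A,W\Omega^{1}_{A/R})$, the decomposition $H^{1}_{\mathrm{cris}}=M_{0}\oplus M_{1}$ propagates through the exterior power to
\[
H^{s}_{\mathrm{cris}}(A/W(R))=\bigwedge\nolimits^{s}(M_{0}\oplus M_{1})=\bigoplus_{i+j=s}\Big(\bigwedge\nolimits^{i}M_{0}\otimes_{W(R)}\bigwedge\nolimits^{j}M_{1}\Big)=\bigoplus_{i+j=s}H^{i}(A,W\Omega^{j}_{A/R}).
\]
Since tensor and exterior powers of multiplicative displays are again multiplicative and Tate twists are additive, each summand $\bigwedge^{i}M_{0}\otimes\bigwedge^{j}M_{1}$ is the $(-j)$-fold Tate twist of a multiplicative display and the crystalline Frobenius induces $p^{j}F$ on it; degeneration of the full slope spectral sequence then follows, since the exterior power of the degenerate degree-one sequence is again degenerate (equivalently, the rank identities $\mathrm{rk}\,H^{s}_{\mathrm{cris}}=\sum_{i+j=s}\mathrm{rk}\,H^{i}(A,W\Omega^{j}_{A/R})$ hold in every degree).

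I expect the main obstacle to be the base case together with these compatibility claims, not the combinatorics of the exterior power. Concretely, the delicate points are: (i) identifying the slope-graded pieces of the relative de Rham--Witt cohomology with the displays of the multiplicative-type and \'{e}tale parts of $G$ over the \emph{non-reduced} base $R$ and verifying the multiplicative-display property there, which is exactly where ordinariness enters and where one must be content with a splitting as $W(R)$-modules --- the slope filtration need not split as $F$-crystals, the obstruction being the Serre--Tate coordinate of the deformation; and (ii) checking that the cup-product/exterior-algebra structure is compatible with the display structure coming from the Nygaard complexes, so that the K\"{u}nneth decomposition is genuinely a decomposition into $(-j)$-fold Tate twists of multiplicative displays and not merely of the underlying $W(R)$-modules.
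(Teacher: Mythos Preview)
Your overall strategy---establish the $s=1$ decomposition and then bootstrap via the exterior-algebra structure of abelian-scheme cohomology---is exactly the paper's. But the assertion you phrase as ``likewise that $H^{i}(A,W\Omega^{j}_{A/R})=\bigwedge^{i}H^{1}(A,W\mathcal{O}_{A})\otimes\bigwedge^{j}H^{0}(A,W\Omega^{1}_{A/R})$'' is not an available input; it is the main thing to be proved, and your rank-count argument for degeneration is circular over the non-reduced base $R$, since it presupposes that the $H^{i}(A,W\Omega^{j}_{A/R})$ are free $W(R)$-modules of the expected rank. Cup product gives only a \emph{map} $\varsigma$ in that direction.

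The paper fills this gap by a two-step mechanism. First, by induction on $j$, using the comparison $N^{r}W\Omega^{\bullet}_{A/R}\simeq\mathcal{F}^{r}\Omega^{\bullet}_{\mathcal{A}/W(R)}$ (Theorem~\ref{filtered comparison}) and the degeneration of the $E_{1}$-spectral sequence for $\mathcal{F}^{r}$, one shows that certain maps $(V,\mathrm{id})$ between hypercohomologies of truncated Nygaard and de Rham--Witt complexes are injective with cokernel $H^{i}(A,\Omega^{j}_{A/R})$, and deduces exact sequences
\[
0\longrightarrow H^{i}(A,W\Omega^{j}_{A/R})\xrightarrow{\ V\ } H^{i}(A,W\Omega^{j}_{A/R})\longrightarrow H^{i}(A,\Omega^{j}_{A/R})\longrightarrow 0
\]
for all $i,j$. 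Second, an abstract criterion (Lemma~\ref{(iii)}): if $P$ is a multiplicative display with an $F$-equivariant map $\varsigma:P\to H^{i}(A,W\Omega^{j}_{A/R})$ carrying $I_{R}P$ into $V\cdot H^{i}(A,W\Omega^{j}_{A/R})$, and the induced map $\overline{\varsigma}:P/I_{R}P\to H^{i}(A,\Omega^{j}_{A/R})$ is an isomorphism of free $R$-modules, then $\varsigma$ itself is an isomorphism. Here $\overline{\varsigma}$ is the ordinary Hodge-cohomology cup product for $A/R$, which is where your rank argument legitimately applies---but only modulo $V$; the lemma is what lifts it. This is precisely the obstacle you flag as (ii), and neither the Nygaard-complex input nor Lemma~\ref{(iii)} appears in your outline.

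A minor bookkeeping point: in the paper's conventions $H^{1}(A,W\mathcal{O}_{A})$ is the Cartier module of the \emph{connected} (multiplicative) part $\widehat{\mathrm{Pic}}_{A/R}$ of $\mathrm{Pic}_{A/R}(p)$, not of the \'{e}tale part of $A[p^{\infty}]$; your slope labels are off by a duality.
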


\begin{rem}
$H^{s}_{\rm{cris}}(A/W(R))$ is equipped with the exterior power structure $\bigwedge^{s}H^{1}_{\rm{cris}}(A/W(R))$ of the display $H^{1}_{\rm{cris}}(A/W(R))$, which is a direct sum
\begin{equation*}
H^{1}_{\rm{cris}}(A/W(R))=H^{1}(A,W\mathcal{O}_{A})\oplus H^{0}(A,W\Omega_{A/R}^{1})
\end{equation*}
of displays according to the connected resp. \'{e}tale part of the $p$-divisible group of the Picard scheme. So we shall derive a canonical isomorphism 
\begin{equation*}
\bigwedge^{i}H^{1}(A,W\mathcal{O}_{A})\otimes\bigwedge^{j}H^{0}(A,W\Omega_{A/R}^{1})\xrightarrow{\simeq}H^{i}(A,W\Omega_{A/R}^{j})
\end{equation*}
of multiplicative displays. 
\end{rem}

In the final section we shall prove our most general result on Hodge-Witt decompositions under deformation. By an $n$-fold we mean a smooth and proper scheme of dimension $n$, defined over a field. We consider a smooth and proper family of $n$-folds $f:\mathcal{X}\rightarrow\mathcal{S}$ over a smooth base $\mathcal{S}\rightarrow\mathrm{Spf}\,W(k)$, with $n< p$. Suppose that the following two properties hold:
\\
\begin{itemize}
\item[(B1)] The relative Hodge sheaves $R^{j}f_{\ast}\Omega_{\mathcal{X}/\mathcal{S}}^{i}$ are locally free for each $i,j$.
\item[(B2)] The relative Hodge-de Rham spectral sequence
\begin{equation*}
E_{1}^{i,j}=R^{j}f_{\ast}\Omega_{\mathcal{X}/\mathcal{S}}^{i}\Rightarrow\mathbb{R}^{i+j}f_{\ast}\Omega_{\mathcal{X}/\mathcal{S}}^{\bullet}
\end{equation*}
degenerates at $E_1$.
\end{itemize}
(See Remark \ref{assumptions} for a discussion of (B1) and (B2) and the relationship with (A1) and (A2)). Then we prove:

\begin{thm}\label{general}
Suppose that $\mathrm{Spec}\,k\rightarrow\mathcal{S}$ is a $k$-point of $\mathcal{S}$ such that the fibre $X_{k}:=\mathcal{X}\times_{\mathcal{S}}\mathrm{Spec}\,k$ over this point is ordinary. Then for any artinian local ring $R$ with residue field $k$, if $\mathrm{Spec}\,k\rightarrow\mathcal{S}$ lifts to a morphism $\mathrm{Spec}\,R\rightarrow\mathcal{S}$
\begin{equation*}
\begin{tikzpicture}[descr/.style={fill=white,inner sep=1.5pt}]
        \matrix (m) [
            matrix of math nodes,
            row sep=2.5em,
            column sep=2.5em,
            text height=1.5ex, text depth=0.25ex
        ]
        { \mathrm{Spec}\,R & \mathcal{S}  \\
       \mathrm{Spec}\,k  & \ \\};

        \path[overlay,->, font=\scriptsize] 
        (m-1-1) edge (m-1-2)
        (m-2-1) edge (m-1-2)
        ;
        
        \path[overlay, right hook->, font=\scriptsize]
        (m-2-1) edge (m-1-1)
        ;
                                        
\end{tikzpicture}
\end{equation*}
the deformation $X:=\mathcal{X}\times_{\mathcal{S}}\mathrm{Spec}\,R$ of $X_{k}$ admits a Hodge-Witt decomposition of $H_{\mathrm{cris}}^{s}(X/W(R))$ as displays in all degrees $0\leq s\leq 2n$.
\end{thm}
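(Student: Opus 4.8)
The plan is to reduce Theorem~\ref{general} to the situation already axiomatized by (A1) and (A2), and then to spread out the slope decomposition of the ordinary closed fibre $X_k$ across the artinian base. First I would lift the given morphism $\mathrm{Spec}\,R \to \mathcal{S}$: since $\mathcal{S}$ is smooth over $\mathrm{Spf}\,W(k)$ and each transition map $W_{n+1}(R) \to W_n(R)$ is a surjection with nilpotent kernel, the infinitesimal lifting criterion produces a compatible system of morphisms $\mathrm{Spec}\,W_n(R) \to \mathcal{S}$, and hence a compatible system of smooth liftings $X_n := \mathcal{X} \times_{\mathcal{S}} \mathrm{Spec}\,W_n(R)$ of $X$. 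Because the relative Hodge sheaves are locally free by (B1) and the relative Hodge--de Rham spectral sequence degenerates by (B2), flat base change along $\mathrm{Spec}\,W_n(R)\to\mathcal{S}$ together with cohomology-and-base-change shows that these liftings satisfy (A1) and (A2); the hypothesis $n<p$ is exactly what is needed to invoke the constructions of \cite{LZ07} and \cite{GL20}, so that $H^s_{\mathrm{cris}}(X/W(R)) \cong \mathbb{H}^s(X, W\Omega^\bullet_{X/R})$ carries the display structure arising from the Nygaard complexes. Thus it remains to establish the degeneration of \eqref{ss} and the splitting under (A1), (A2) and ordinarity of $X_k$.

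Next I would record the model over the closed fibre. As $X_k$ is ordinary, \cite[Th\'{e}or\`{e}me 4.13]{IR83} gives that its slope spectral sequence degenerates and that $H^s_{\mathrm{cris}}(X_k/W(k)) = \bigoplus_{i+j=s} H^i(X_k, W\Omega^j_{X_k/k})$, with each summand a finitely generated free $W(k)$-module on which the $(-j)$-fold Tate twist of the Frobenius acts as a unit-root $F$-crystal, i.e. $F$ induces an $F$-linear isomorphism. This is the decomposition we must deform over $W(R)$. The comparison between the relative de Rham--Witt and crystalline theories identifies $\mathbb{H}^s(X, W_n\Omega^\bullet_{X/R})$ with $H^s_{\mathrm{dR}}(X_n/W_n(R))$, which by (A1) and (A2) is a free $W_n(R)$-module of rank $\sum_{i+j=s} \dim_k H^j(X_k, \Omega^i_{X_k/k})$. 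Comparing this rank with the sum of the ranks of the $E_1$-terms of \eqref{ss} — computed after $\otimes_{W(R)} k$, where the sequence degenerates by the ordinary case — forces \eqref{ss} to degenerate at $E_1$ over $R$ as well, and simultaneously shows that each $H^i(X, W\Omega^j_{X/R})$ is free over $W(R)$ of the expected rank. This freeness-and-rank-count is the relative analogue of the Deligne--Illusie argument and mirrors the treatment of the K3 case in \cite{LZ19}.

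Finally I would produce the decomposition as displays. The crystalline Frobenius induces $p^j F$ on the free $W(R)$-module $H^i(X, W\Omega^j_{X/R})$; reducing modulo the maximal ideal of $W(R)$ recovers the unit-root crystal of the ordinary fibre, so by Nakayama's lemma the $F$-linear map $W(R) \otimes_{F,W(R)} H^i(X,W\Omega^j_{X/R}) \to H^i(X, W\Omega^j_{X/R})$, being an isomorphism on the closed fibre between free modules of equal rank, is an isomorphism over $W(R)$; this is precisely the assertion that each graded piece is a $(-j)$-fold Tate twist of a multiplicative display. To split $H^s_{\mathrm{cris}}$ itself I would peel off the slope filtration by $F$: the multiplicative subobject of lowest slope should split off as a direct summand because unit-root $F$-crystals are rigid and deform uniquely from their ordinary special fibre, so the relevant extension classes vanish, and iterating over the slopes $0,1,\dots,2n$ yields the direct sum decomposition compatibly with the display structure. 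The main obstacle is exactly this last step: unlike over a perfect field, neither the degeneration of \eqref{ss} nor the splitting of the slope filtration is automatic over the non-reduced base $R$, and the whole argument hinges on leveraging the ordinarity of $X_k$ — which makes every graded piece multiplicative — to both force degeneration via the rank count and kill the extension classes obstructing the splitting via rigidity of multiplicative displays.
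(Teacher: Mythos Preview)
Your reduction to (A1) and (A2) is fine and matches Remark~\ref{assumptions} in the paper, though note that the paper uses the specific lift $A\to W(A)\to W_n(R)$ through Teichm\"uller representatives rather than an arbitrary one furnished by smoothness. The serious gaps are in your second and third paragraphs.

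The rank-count argument for degeneration presupposes that $H^i(X,W\Omega^j_{X/R})\otimes_{W(R)}k$ recovers $H^i(X_k,W\Omega^j_{X_k/k})$, or at least that the $E_1$-terms are finitely generated $W(R)$-modules whose lengths can be bounded from below by the special-fibre values. There is no base-change theorem of this kind for individual Hodge--Witt groups over a non-reduced base, and no a priori reason the groups are free. The paper does \emph{not} argue by rank count: it proves directly, for each $j$ by induction and for all $i$ at once, that $V$ is injective on $H^i(X,W\Omega^j_{X/R})$ with cokernel $H^i(X,\Omega^j_{X/R})$. This uses the filtered comparison $N^{r}W\Omega^\bullet_{X/R}\simeq\mathcal{F}^{r}\Omega^\bullet_{\mathfrak{X}/W(R)}$ (Theorem~\ref{filtered comparison}) together with the degeneration of the $\mathcal{F}^{r}$-spectral sequence under (A1)--(A2), and a diagram chase comparing $(V,\mathrm{id}):\mathbb{H}^i(N^{j+1}W\Omega^{\geq j})\to\mathbb{H}^i(W\Omega^{\geq j})$ with its de Rham avatar. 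Only after this $V$-exact sequence is in hand can one run a Nakayama-type argument (Lemma~\ref{(iii)}), and even then the input is a map $\varsigma$ from an already-known multiplicative display, not a bare comparison of ranks.

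Your splitting step is also unjustified. Rigidity of unit-root $F$-crystals concerns deformations of a fixed object, not the vanishing of $\mathrm{Ext}^1$ between two such objects; extensions of multiplicative displays over $W(R)$ need not be trivial. The paper obtains the splitting by a different route: over $A=W(k)\llbracket T_1,\dots,T_r\rrbracket$ the pair $(H^s_{\mathrm{dR}}(\mathcal{X}/A),\mathrm{Fil}^\bullet)$ is an ordinary Hodge $F$-crystal in the sense of Deligne \cite{Del81a}, so the conjugate filtration $U_\bullet$ exists with unit-root graded pieces (up to twist) and is \emph{opposite} to the Hodge filtration; this opposition is what gives the direct-sum decomposition, already over $A$. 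One then evaluates at $W(A)$, base-changes to $W(R)$, and shows that the composite $\varsigma:(U_i/U_{i-1})_{W(R)}\to\mathbb{H}^s(X,W\Omega^{\geq i})\to H^{s-i}(X,W\Omega^i_{X/R})$ is an isomorphism by checking it modulo $I_R$ (reducing to the slope decomposition over $k$) and invoking Lemma~\ref{(iii)}. In short, the two ingredients you are missing are the Nygaard/$\mathcal{F}^r$ comparison that controls $V$ on Hodge--Witt cohomology, and Deligne's Hodge--conjugate opposition which is the actual source of the splitting.
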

 
\begin{rem}
Notice that Theorem \ref{general} implies results of the same type as Theorem \ref{surfaces} and Theorem \ref{abelian} whenever $X_{k}$ has a smooth versal deformation space such that the versal family satisfies (B1) and (B2) (see Remark \ref{assumptions}). In particular, Theorem \ref{abelian} follows from Theorem \ref{general} because the formal deformation space of an abelian variety is smooth and the de Rham cohomology of the versal family satisfies (B1) and (B2) by \cite[2.1.1]{Del81a}. Similarly Theorem \ref{surfaces} follows from Theorem \ref{general} in the special case of K3 surfaces (by \cite[\S2]{Del81b}) and smooth complete intersections (by \cite[Theorem 1.5]{Del73}). We do not know of a surface $X/\mathrm{Spec}\, R$ satisfying (A1), (A2) and (A3) whose versal family does not satisfy (B1) and (B2). We have decided to include the separate proofs of Theorem \ref{surfaces} and Theorem
\ref{abelian} because the techniques are different and elucidate different aspects of the theory; for example the connection with formal groups.

\end{rem} 
 
Finally let us highlight some important examples for which the theorem is applicable, focussing first on the case of Calabi-Yau $n$-folds:

\begin{itemize}
\item[--] Let $k$ be a finite field and consider the Fermat Calabi-Yau $n$-fold $X=X_{n+2}^{n}(p)$ given by
\begin{equation*}
X_{1}^{n+2}+X_{2}^{n+2}+\cdots+X_{n+2}^{n+2}=0
\end{equation*}
in $\mathbb{P}_{R}^{n+1}$, such that $p=\mathrm{char} \ k\equiv 1\mod n+2$. In this case $X_{k}$ is ordinary by \cite{Suw93} (see also \cite{Tok96}) and the Artin-Mazur formal group $\Phi_{X_{k}/k}$ in degree $n$ has height $1$ \cite[Theorem 5.1]{vdGK03}. Since $X_k$ is a hypersurface its deformations are unobstructed, so the deformation space $\mathcal{S}$ of $X_{k}$ is smooth and we consider the versal family $f:\mathcal{X}\rightarrow\mathcal{S}$. The conditions (B1) and (B2) are satisfied for families of hypersurfaces (indeed they are satisfied for smooth relative complete intersections more generally) \cite[Theorem 1.5]{Del73}.

\item[--] Another important example is provided by the Dwork pencil of Calabi-Yau $n$-folds. Let $X=[X_{1}:X_{2}:\ldots:X_{n+2}]$ be the homogeneous coordinates of $\mathbb{P}^{n+1}_{R}$. Then the Dwork pencil is the one-parameter family $V_{t}$ of Calabi-Yau hypersurfaces in $\mathbb{P}^{n+1}_{R}$ over $t\in\mathbb{P}_{R}^{1}$ defined by $P_{t}(X)=0$ where 
\begin{equation*}
P_{t}(X)=X_{1}^{n+2}+X_{2}^{5}+\cdots+X_{n+2}^{n+2}-(n+2)tX_{1}X_{2}\cdots X_{n+2}\,.
\end{equation*}
We can consider this family over any ring $R$ such that $n+2\in R^{\ast}$. Let $R$ be artinian local with perfect residue field $k$ such that $\mathrm{char} \ k>n+2$. When we specialise the family to $\mathbb{P}_{k}^{n+1}$ under the base change $R\rightarrow k$, it is shown in \cite[Theorem 2.2]{Yu09} that the Dwork family is generically ordinary. Hence we can choose $t\in R$ such that the variety $P_{t_{0}}(X)=0$ is ordinary, where $t_{0}$ is the image of $t$ in $k$. We may argue in the same way as in first example to see that (B1) and (B2) are satisfied for the versal family of $X_{k}$.
\end{itemize}
\

Note that in general, Calabi-Yau $n$-folds $X_{k}$ in positive characteristic can have obstructed deformations when $n\geq 3$ (see e.g. \cite{Hir99}, \cite{Sch04}). On the other hand, it is shown in \cite{Sch03} and \cite{ESB05} that if $X_{k}$ admits a smooth formal lifting and satisfies appropriate torsion-freeness hypotheses on its crystalline cohomology, then $X_{k}$ has a smooth deformation space (this gives another proof that our examples satisfy (A3)). It is widely expected that an ordinary Calabi-Yau $n$-fold $X_{k}$ admits even a \emph{canonical} formal lifting, and therefore has unobstructed deformations (see e.g. \cite[\S1]{AZ19} for a summary of the state of the art). In particular, we expect that our result applies to any Calabi-Yau $n$-fold $X$ over $\mathrm{Spec}\,R$ whose closed fibre is ordinary. 

The final example that we point out as of particular interest is the case of varieties of K3 type. Recall from \cite[Definition 22]{LZ19} that a smooth and proper scheme $X_{k}$ over $\mathrm{Spec}\,k$ of dimension $2d$ is of K3 type if the lower four rows of the Hodge diamond are of the form
\begin{center}
\begin{tikzpicture}[descr/.style={fill=white,inner sep=1.5pt}]
        \matrix (m) [
            matrix of math nodes,
            row sep=0.2em,
            column sep=0.0em,
            text height=1.5ex, text depth=0.25ex
        ]
        {h^{3,0} & & h^{2,1} & & h^{1,2} & & h^{0,3} & \ \ \ \ \ \ \ \ \ & 0 & & 0 & & 0 & & 0 \\
         & h^{2,0} & & h^{1,1} & & h^{0,2} & & = & & 1 & & h^{1,1} & & 1 & \\
         & & h^{1,0} & & h^{0,1} & & & & & & 0 & & 0 & & \\
         & & & h^{0,0} & & & & & & & & 1 & & & \\
        };

       \end{tikzpicture} 
\end{center}
where $h^{i,j}=\dim_{k}H^{j}(X_{0},\Omega_{X_{0}/k}^{i})$. Moreover, we require that there exists a $\sigma\in H^{0}(X_{k},\Omega_{X_{k}/k}^{2})$ such that $\sigma^{d}\in H^{0}(X_{k},\Omega_{X_{k}/k}^{2d})$ defines an isomorphism $\mathcal{O}_{X_{k}}\xrightarrow{\sim}\Omega^{2d}_{X_{k}/k}$ and a $\rho\in H^{2}(X_{k},\mathcal{O}_{k})$ such that $\rho^{d}$ generates $H^{2d}(X_{k},\mathcal{O}_{X_{k}})$ and the pairing 
\begin{equation*}
H^{1}(X_{k},\Omega_{X_{k}/k}^{1})\times H^{1}(X_{k},\Omega_{X_{k}/k}^{1})\rightarrow k\,,\,\,\omega_{1}\times\omega_{2}\mapsto\int\omega_{1}\omega_{2}\sigma^{d-1}\rho^{d-1}
\end{equation*}
is perfect. Note that $\sigma$ induces an isomorphism $\mathcal{T}_{X_{k}}:=\mathrm{Hom}(\Omega^{1}_{X_{k}/k},\mathcal{O}_{X_{k}})\simeq\Omega^{1}_{X_{k}/k}$. It is clear that K3 surfaces are varieties of K3 type. Further examples can be constructed by considering the Hilbert scheme $X^{[d]}$ of $d$ points on a K3 surface $X$ in characteristic zero. Spreading out $X^{[d]}$ over a scheme $S$ which is flat and of finite type over $\mathrm{Spec}\,\mathbb{Z}$ and reducing modulo $p$ gives a variety of K3 type over the residue field for almost all primes $p$ \cite[pg 484]{LZ19}. In these examples, the odd Betti numbers vanish by \cite[Theorem 0.1]{Got90}.   

Let $X_{k}$ be a variety of K3 type over $\mathrm{Spec}\,k$. Since $H^{0}(X_{k},\mathcal{T}_{X_{k}})=H^{2}(X_{k},\mathcal{T}_{X_{k}})=0$, $X_{k}$ has a universal deformation $f:\mathcal{X}\rightarrow\mathcal{S}$ where $\mathcal{S}=\mathrm{Spf}\,W(k)\llbracket T_{1},\ldots,T_{r}\rrbracket$ and $r=h^{1,1}$. Suppose moreover that the odd Betti numbers of $X_{k}$ vanish. Then $f:\mathcal{X}\rightarrow\mathcal{S}$ satisfies (B1) and (B2) by \cite[II.5]{Mum70} (there is no room for non-zero differentials due to the Betti number condition). We may then use Theorem \ref{general} to conclude that if $X_{k}$ is ordinary and $\dim X_{k}<p$, then the crystalline cohomology (in all degrees) of any deformation of $X_{k}$ over an artinian local ring $R$ with residue field $k$ admits a Hodge-Witt decomposition as displays.
\\
\\
\noindent {\bf{Acknowledgements.}} We are grateful to the referee for many helpful and enlightening comments which improved the paper.

\section{Introduction to higher displays}

In this section we present the main tools developed in \cite{LZ04}, \cite{LZ07}, \cite{LZ19}, \cite{Lan18} and \cite{GL20} to impose a display structure on relative crystalline cohomology. For a smooth scheme $X$ over a ring $R$ on which $p$ is nilpotent, we defined in \cite{LZ04} the relative de Rham-Witt complex $W_{n}\Omega_{X/R}^{\bullet}$ as an initial object in the category of $F$-$V$-procomplexes, in particular it is equipped with operators $F:W_{n}\Omega_{X/R}^{\bullet}\rightarrow W_{n-1}\Omega_{X/R}^{\bullet}$ and $V:W_{n-1}\Omega_{X/R}^{\bullet}\rightarrow W_{n}\Omega_{X/R}^{\bullet}$ extending the Frobenius and Verschiebung on the Witt vector sheaf $W_{n}\mathcal{O}_{X}$, and satisfying the standard relations in Cartier theory. $W_{n}\Omega_{X/R}^{\bullet}$ coincides with Deligne-Illusie's de Rham-Witt complex for $R=k$ a perfect field, and its hypercohomology computes the crystalline cohomology of $X/W_{n}(R)$.

Let $X$ be a proper and smooth scheme over $\mathrm{Spec}\, R$ and let $I_{R,n}=\tensor*[^V]{W_{n-1}(R)}{}$ and $I_{R}=\tensor*[^V]{W(R)}{}$. Then we consider the following variant of $W_{n}\Omega_{X/R}^{\bullet}$, denoted by $N^{r}W_{n}\Omega_{X/R}^{\bullet}$, for $r\geq 0$:
\begin{equation*}
{W_{n-1}\Omega_{X/R}^{0}}_{[F]}\xrightarrow{d}{W_{n-1}\Omega_{X/R}^{1}}_{[F]}\xrightarrow{d}\cdots\xrightarrow{d}{W_{n-1}\Omega_{X/R}^{r-1}}_{[F]}\xrightarrow{dV}W_{n}\Omega_{X/R}^{r}\xrightarrow{d}\cdots\,.
\end{equation*}
This is a complex of $W_{n}(R)$-modules, where ${W_{n-1}\Omega_{X/R}^{i}}_{[F]}$ for $i<r$ denotes $W_{n-1}\Omega_{X/R}^{i}$ considered as a $W_{n}(R)$-module via restriction of scalars along $W_{n}(R)\xrightarrow{F}W_{n-1}(R)$. Let $P_{0}:=\mathbb{H}^{m}(X,W\Omega_{X/R}^{\bullet})=\varprojlim_{n}\mathbb{H}^{m}(X,W_{n}\Omega_{X/R}^{\bullet})$ and $P_{r}:=\varprojlim_{n}\mathbb{H}^{n}(X,N^{r}W_{n}\Omega_{X/R}^{\bullet})$. Then there are maps $F_{r}\,:\,P_{r}\rightarrow P_{0}$ induced by corresponding divided Frobenius maps 
\begin{equation*}
\hat{F}_{r}\,:\,N^{r}W_{n}\Omega_{X/R}^{\bullet}\rightarrow{W_{n-1}\Omega_{X/R}^{\bullet}}
\end{equation*}
defined as the identity in degrees $<r$ and as $p^{i}F$ in degree $r+i$, for $i\geq 0$. The standard relations between $F$, $V$ and $d$ imply that $\hat{F}_{r}$ is well-defined. There are also maps $\hat{\iota}_{r}:N^{r+1}W_{n}\Omega_{X/R}^{\bullet}\rightarrow N^{r}W_{n}\Omega_{X/R}^{\bullet}$, $\hat{\alpha}_{r}:I_{R,n}\otimes N^{r}W_{n}\Omega_{X/R}^{\bullet}\rightarrow N^{r+1}_{n}W\Omega_{X/R}^{\bullet}$
given explicitly in \cite{LZ07} and \cite{GL20} that induce three sets of maps:
\begin{enumerate}
\item $\cdots\rightarrow P_{r+1}\xrightarrow{\iota_{r}}P_{r}\xrightarrow{\iota_{r-1}}\cdots\xrightarrow{\iota_{1}}P_{0}$ a chain of $W(R)$-module homomorphisms,
\item $W(R)$-module homomorphisms $\alpha_{r}:I_{R}\otimes P_{r}\rightarrow P_{r+1}$ ,
\item  Frobenius-linear maps $F_{r}:P_{r}\rightarrow P_{0}$
\end{enumerate}
satisfying the following:
\begin{enumerate}[I)]
\item For $r\geq 1$
\begin{equation*}
\begin{tikzpicture}[descr/.style={fill=white,inner sep=1.5pt}]
        \matrix (m) [
            matrix of math nodes,
            row sep=4em,
            column sep=2em,
            text height=1.5ex, text depth=0.25ex
        ]
        { I_{R}\otimes P_{r} & P_{r+1} \\
        I_{R}\otimes P_{r-1} & P_{r} \\};

        \path[overlay,->, font=\scriptsize] 
        (m-1-1) edge node [above]{$\alpha_{r}$} (m-1-2)
        (m-2-1) edge node [above]{$\alpha_{r-1}$}        (m-2-2)
        (m-1-1) edge node [left]{$\mathrm{id}\otimes\iota_{r-1}$} (m-2-1)
        (m-1-2) edge node [right]{$\iota_{r}$} (m-2-2)
         ;
                        
\end{tikzpicture}
\end{equation*}
commutes and the diagonal map $I_{R}\otimes P_{r}\rightarrow P_{r}$ is multiplication. For $r=0$, the composition $I_{R}\otimes P_{0}\xrightarrow{\alpha_{0}}P_{1}\xrightarrow{\iota_{0}} P_{0}$ is multiplication.

\item For $r\geq 0$, 
\begin{align*}
F_{r+1}\circ\alpha_{r}=\tilde{F}_{r} \,:\,
& I_{R}\otimes P_{r}\rightarrow P_{0} \\
& \tensor*[^V]{\xi}{}\otimes x\mapsto\xi F_{r}x \,.
\end{align*}
\end{enumerate}
The above data define the structure of a predisplay $\mathcal{P}=(P_{i},\iota_{i},\alpha_{i}, F_{i})$ on $H_{\rm{cris}}^{n}(X/W(R))$ \cite[Definition 2.2]{LZ07}. We note that properties I) and II) imply the property III) of a predisplay:
\\
\par
III) $F_{r}(\iota_{r}(y))=pF_{r+1}(y)$
\\
\\
(see \cite[pp 155-156]{LZ07}), i.e. the diagram below is commutative
\begin{equation*}
\begin{tikzpicture}[descr/.style={fill=white,inner sep=1.5pt}]
        \matrix (m) [
            matrix of math nodes,
            row sep=4em,
            column sep=3em,
            text height=1.5ex, text depth=0.25ex
        ]
        { P_{r} & P_{0} \\
        P_{r+1} & P_{0} \\};

        \path[overlay,->, font=\scriptsize] 
        (m-1-1) edge node [above]{$F_{r}$} (m-1-2)
        (m-2-1) edge node [above]{$F_{r+1}$}        (m-2-2)
        (m-2-1) edge node [left]{$\iota_{r}$} (m-1-1)
        (m-2-2) edge node [right]{$p$} (m-1-2)
         ;
                        
\end{tikzpicture}
\end{equation*}
The predisplays appearing in this paper are separated, i.e. the map from $P_{r+1}$ to the fibre product induced by the above diagram is injective \cite[Definition 2.3]{LZ07}. A predisplay is of degree $d$ (or a $d$-predisplay) if the maps $\alpha_{r}$ are surjective for $r\geq d$ \cite[Definition 2.4]{LZ07}.

Now assume that $X$ admits a compatible system of liftings $X_{n}/W_{n}(R)$ satisfying the assumptions (A1) and (A2) in the introduction, and let $\mathcal{F}^{r}\Omega_{X_{n}/W_{n}(R)}^{\bullet}$ be the following filtered version of the de Rham complex $\Omega_{X_{n}/W_{n}(R)}^{\bullet}$:
\begin{equation*}
I_{R,n}\otimes\mathcal{O}_{X_{n}}\xrightarrow{pd}I_{R,n}\otimes\Omega_{X_{n}/W_{n}(R)}^{1}\xrightarrow{pd}\cdots\xrightarrow{pd}I_{R,n}\otimes\Omega_{X_{n}/W_{n}(R)}^{r-1}\xrightarrow{d}\Omega_{X_{n}/W_{n}(R)}^{r}\xrightarrow{d}\cdots\,.
\end{equation*}
As one of the main results in \cite{Lan18}, used in \cite{LZ19} and \cite{GL20}, we recall
\begin{thm}\label{filtered comparison}
For $r<p$ the complexes $\mathcal{F}^{r}\Omega_{X_{n}/W_{n}(R)}^{\bullet}$ and $N^{r}W_{n}\Omega_{X/R}^{\bullet}$ are isomorphic in the derived category of $W_{n}(R)$-modules.
\end{thm}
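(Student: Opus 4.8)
\emph{The plan} is to write down an explicit comparison morphism of complexes of sheaves and then to prove it is a quasi-isomorphism by d\'evissage on the Witt length $n$. Since the statement concerns an isomorphism in the derived category, I would first reduce to a local computation on an affine open $\mathrm{Spec}\,A_{n}\subseteq X_{n}$ on which the smooth $W_{n}(R)$-algebra $A_{n}$ carries a lift $\sigma\colon A_{n}\to A_{n-1}$ of the relative Frobenius, semilinear over $F\colon W_{n}(R)\to W_{n-1}(R)$; such a lift exists by formal smoothness, and global statements are recovered by gluing over a \v{C}ech cover, different lifts giving homotopic comparison maps. On such an affine the choice of $\sigma$ produces a comparison morphism of complexes $c\colon\Omega^{\bullet}_{X_{n}/W_{n}(R)}\to W_{n}\Omega^{\bullet}_{X/R}$ whose composite with the de Rham--Witt Frobenius $F$ realises the divided Frobenius $p^{-i}\sigma^{\ast}$ on $i$-forms. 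It is already here that the hypothesis $r<p$ enters, since these divided Frobenius maps require the denominators $p^{i}$ with $i<p$ to be integral.

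Next I would define the comparison $\Phi\colon\mathcal{F}^{r}\Omega^{\bullet}_{X_{n}/W_{n}(R)}\to N^{r}W_{n}\Omega^{\bullet}_{X/R}$ by setting $\Phi=c$ in degrees $\ge r$ and, in degrees $i<r$, using the identification $I_{R,n}={}^{V}W_{n-1}(R)$, sending $V\xi\otimes\omega\mapsto\xi\cdot F(c(\omega))\in W_{n-1}\Omega^{i}_{[F]}$; thus in low degrees $\Phi$ is exactly the divided Frobenius. The main point is that $\Phi$ is a chain map. The module structures are compatible because $a\cdot V\xi=V(Fa\cdot\xi)$ turns multiplication by $a\in W_{n}(R)$ into the $[F]$-twist. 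In degrees $<r$ the compatibility of the differential $pd$ with the differential $d$ on $W_{n-1}\Omega^{\bullet}_{[F]}$ is precisely the relation $dF=pFd$, which is the structural reason for the factor $p$ appearing in $\mathcal{F}^{r}$. At the Nygaard jump from degree $r-1$ to $r$, the compatibility of $d$ with $dV$ follows from the projection formula $V(\xi\cdot Fy)=V\xi\cdot y$ together with the $W_{n}(R)$-linearity of the differential in the relative complex, which annihilates the constants $W_{n}(R)$.

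I would then prove that $\Phi$ is a quasi-isomorphism by induction on $n$. The base case $n=1$ is immediate: since $I_{R,1}=0$ and $W_{0}\Omega^{\bullet}=0$, both complexes coincide with the complex equal to $\Omega^{i}_{X/R}$ in degrees $i\ge r$ and zero otherwise, and $\Phi$ is the identity there. For the inductive step I would choose the lifts $\sigma$ compatibly in $n$ so that $\Phi$ commutes with the restriction maps $R$, giving a morphism of short exact sequences of complexes whose quotients are $\mathcal{F}^{r}\Omega^{\bullet}_{X_{n-1}/W_{n-1}(R)}$ and $N^{r}W_{n-1}\Omega^{\bullet}_{X/R}$. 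The map between the quotients is a quasi-isomorphism by the inductive hypothesis, so by the long exact cohomology sequences it suffices to treat the kernels. On the de Rham--Witt side this kernel is the top $V$-graded piece, built from $V^{n-1}\Omega^{\bullet}_{X/R}$ and $dV^{n-1}\Omega^{\bullet}_{X/R}$, and on the de Rham side it is the associated graded of $\mathcal{F}^{r}$ for the kernel of $W_{n}(R)\to W_{n-1}(R)$.

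I expect the identification of these kernels to be the main obstacle, and it is here that the bound $r<p$ is indispensable: matching the $p$-scaled Hodge filtration against the $V$-adic filtration on the de Rham--Witt complex is governed by the Cartier isomorphism, whose integral form requires the divided powers attached to degrees $i\le r<p$ to behave well (both the $p^{-i}$ of the divided Frobenius and the divided-power structure in this range). The other delicate point is the globalisation---checking that the locally defined $c$, which depends on the auxiliary Frobenius lift, glues to a morphism in the derived category that is independent of all choices; this is handled by the standard argument that two Frobenius lifts induce homotopic comparison morphisms.
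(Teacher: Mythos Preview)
The paper does not give its own proof of this theorem; it is quoted from \cite{Lan18}. However, Section~3 of the paper recalls enough of the argument from \cite{Lan18} that one can compare. The approach there is genuinely different from yours: rather than a direct map $\Phi$ and induction on the Witt length, one chooses locally a closed embedding $i_n\colon X_n\hookrightarrow Z_n$ into a \emph{Witt lift} (a smooth $W_n(R)$-scheme equipped with a ring map $\mathfrak{S}\colon\mathcal{O}_{Z_n}\to W_n\mathcal{O}_X$), forms the PD-envelope $D_n$, and introduces an intermediate complex $\mathrm{Fil}^{r}\Omega^{\bullet}_{D_n/W_n(R)}$ built from the PD-filtration $\mathcal{J}^{[\bullet]}$ (this is the staircase diagram \eqref{big diagram} in the paper). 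One then constructs explicit maps
\[
\mathcal{F}^{r}\Omega^{\bullet}_{X_n/W_n(R)}\xleftarrow{\ \sim\ }\mathrm{Fil}^{r}\Omega^{\bullet}_{D_n/W_n(R)}\xrightarrow{\ \Sigma\ }N^{r}W_n\Omega^{\bullet}_{X/R}
\]
and shows separately that each is a quasi-isomorphism; the left one by the filtered PD-Poincar\'e lemma \cite[Theorem 7.2]{BO78}, the right one by an explicit analysis of $\Sigma$. Globalisation is by simplicial methods rather than \v{C}ech patching.

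Your plan has two genuine gaps. First, the data ``a Frobenius lift $\sigma\colon A_n\to A_{n-1}$'' does not by itself produce the comparison map $c\colon\Omega^{\bullet}_{A_n/W_n(R)}\to W_n\Omega^{\bullet}_{A/R}$ you need. What is actually required is a $W_n(R)$-algebra homomorphism $A_n\to W_n(A)$ lifting the identity on $A$; this is precisely the Witt-lift datum $\mathfrak{S}$ that \cite{Lan18} takes as input, and your $\sigma$ is at best the shadow $F\circ\mathfrak{S}$ of such a choice. Such a section exists locally by formal smoothness, but you should construct $c$ from $\mathfrak{S}$, not from $\sigma$; otherwise your formula $\Phi({}^V\xi\otimes\omega)=\xi\cdot F(c(\omega))$ has no meaning. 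Second, the heart of your d\'evissage---identifying the kernels of the restriction maps on the two sides---is left as ``the main obstacle'' with only a gesture toward the Cartier isomorphism. This step is where the condition $r<p$ is really used and where most of the work in \cite{Lan18} lies; the PD-envelope route has the advantage that the analogous comparison is packaged into the filtered Poincar\'e lemma, so the divided-power bookkeeping is done once and for all rather than at each stage of an induction on $n$.
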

One might view this theorem as a filtered version of the comparison between de Rham-Witt cohomology and the de Rham cohomology of a lifting.

Next we point out that the complexes $\mathcal{F}^{r}\Omega_{X_{n}/W_{n}(R)}^{\bullet}$ possess -- under the assumptions (A1), (A2) -- very nice properties:  the $E_{1}$-hypercohomology spectral sequence associated to $\mathcal{F}^{r}\Omega_{X_{n}/W_{n}(R)}^{\bullet}$ degenerates at $E_{1}$ (compare \cite{LZ07}, Proposition 3.2 and the properties following Proposition 3.1). The theorem yields a description of the $\mathbb{H}^{m}(X,\mathcal{F}^{r}\Omega_{X_{n}/W_{n}(R)}^{\bullet})$ in terms of de Rham cohomology:
\begin{equation*}
P_{r}\cong I_{R,n}L_{0}\oplus I_{R,n}L_{1}\oplus\cdots\oplus I_{R,n}L_{r-1}\oplus L_{r}\oplus\cdots\oplus L_{m}
\end{equation*}
where $L_{i}:=H^{m-i}(X_{n},\Omega_{X_{n}/W_{n}(R)}^{i})$, indeed we have the following lemma.

\begin{lemma}
Let $X_{n}/\mathrm{Spec}\,W_{n}(R)$, $n\in\mathbb{N}$, be a compatible system of smooth and proper liftings of $X/R$ satisfying (A1) and (A2). Then the $E_{1}$-hypercohomology spectral sequence associated to $\mathcal{F}^{r}\Omega_{X_{n}/W_{n}(R)}^{\bullet}$ degenerates at $E_{1}$ and gives a direct sum decomposition for $i\geq r$
\begin{align*}
\mathbb{H}^{m}(X_{n},\mathcal{F}^{r}\Omega_{X_{n}/W_{n}(R)}^{\bullet})\simeq
& I_{R,n}\otimes H^{m}(X_{n},\mathcal{O}_{X_{n}})\oplus I_{R,n}\otimes H^{m-1}(X_{n},\Omega^{1}_{X_{n}/W_{n}(R)})\oplus\cdots \\
&\oplus I_{R,n}\otimes H^{m-(r-1)}(X_{n},\Omega^{r-1}_{X_{n}/W_{n}(R)})\oplus H^{m-r}(X_{n},\Omega^{r}_{X_{n}/W_{n}(R)}) \\ 
& \oplus \cdots\oplus H^{0}(X_{n},\Omega^{m}_{X_{n}/W_{n}(R)})\,.
\end{align*} 
\end{lemma}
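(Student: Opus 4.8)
**The plan is to deduce both the degeneration and the direct sum decomposition from the two hypotheses (A1), (A2) together with a filtration argument on the complex $\mathcal{F}^{r}\Omega_{X_{n}/W_{n}(R)}^{\bullet}$.** The key observation is that this filtered complex is built from the ordinary de Rham complex by tensoring the low-degree terms (positions $0,\ldots,r-1$) with the flat $W_{n}(R)$-module $I_{R,n}={}^{V}W_{n-1}(R)$ and twisting the differentials by powers of $p$. Since (A2) asserts that the Hodge-de Rham spectral sequence for $\Omega_{X_{n}/W_{n}(R)}^{\bullet}$ already degenerates at $E_1$, my strategy will be to transport this degeneration across to $\mathcal{F}^{r}\Omega_{X_{n}/W_{n}(R)}^{\bullet}$, where the only modifications are the $p$-factors on the differentials and the tensor factor $I_{R,n}$.

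First I would set up the stupid (brutal) filtration $\sigma^{\geq\bullet}$ on $\mathcal{F}^{r}\Omega_{X_{n}/W_{n}(R)}^{\bullet}$, whose associated graded pieces are the individual sheaves, namely $I_{R,n}\otimes\Omega_{X_{n}/W_{n}(R)}^{i}$ placed in degree $i$ for $i<r$ and $\Omega_{X_{n}/W_{n}(R)}^{i}$ in degree $i$ for $i\geq r$. The resulting $E_1$-page is therefore $E_{1}^{i,j}=I_{R,n}\otimes H^{j}(X_{n},\Omega_{X_{n}/W_{n}(R)}^{i})$ for $i<r$ and $E_{1}^{i,j}=H^{j}(X_{n},\Omega_{X_{n}/W_{n}(R)}^{i})$ for $i\geq r$; here I use that $I_{R,n}$ is flat over $W_{n}(R)$ so that forming cohomology commutes with $-\otimes I_{R,n}$. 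The point I would then make precise is that the $E_1$-differentials of this spectral sequence are identified with the $E_1$-differentials of the untwisted Hodge-de Rham spectral sequence, up to multiplication by units and by the invertible endomorphism of $I_{R,n}$ induced by the relevant powers of $p$: the only genuinely new feature is the map $pd$ in internal degrees below $r$, but on the $E_1$-page every differential $d_1$ is induced by $d$ on cohomology sheaves, and the $p$-scaling does not create new nonzero maps where the original ones vanished. Consequently the $d_1$-differentials all vanish by (A2), and the spectral sequence degenerates at $E_1$.

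Granting degeneration, the decomposition follows by combining (A1) with the freeness of the terms. Degeneration means each $\mathbb{H}^{m}$ carries a filtration whose graded pieces are exactly the $E_{1}^{i,m-i}$, i.e. $I_{R,n}\otimes H^{m-i}(X_{n},\Omega^{i}_{X_{n}/W_{n}(R)})$ for $i<r$ and $H^{m-i}(X_{n},\Omega^{i}_{X_{n}/W_{n}(R)})$ for $i\geq r$. To upgrade this filtration to an honest direct sum I would invoke (A1): each $H^{m-i}(X_{n},\Omega^{i}_{X_{n}/W_{n}(R)})$ is a free $W_{n}(R)$-module, so the successive extensions split, yielding the claimed isomorphism. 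This matches the stated formula, where the summands up to internal degree $r-1$ acquire the tensor factor $I_{R,n}$ and those of degree $\geq r$ do not.

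\textbf{The main obstacle} I anticipate is the careful bookkeeping in the degeneration step: one must verify that scaling the low-degree differentials by $p$ and inserting the tensor factor $I_{R,n}$ (which is \emph{not} free but only flat, and on which $p$ acts as a non-unit) does not produce a nonzero $d_1$ where the untwisted one was zero, and, conversely, does not accidentally kill the comparison with (A2). The cleanest way around this is to note that on the $E_1$-page the differential is the map on cohomology induced by the sheaf map $\Omega^{i}\to\Omega^{i+1}$, and that the commutative square relating the twisted and untwisted complexes (via the morphism multiplying by suitable powers of $p$, which is an isomorphism after inverting $p$ but more delicately a morphism of complexes integrally) identifies the two $E_1$-differentials up to an injective scalar; since (A2) forces the target to be zero on $E_1$, the twisted $d_1$ must vanish as well. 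Making this comparison morphism precise at the level of complexes, and checking it is compatible with the filtrations, is where the real work lies.
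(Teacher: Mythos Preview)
Your proposal has three genuine problems, two of which are serious.

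First, a minor error: $I_{R,n}={}^{V}W_{n-1}(R)$ is \emph{not} flat over $W_{n}(R)$. Already for $R=k$ perfect one has $W_{n}(k)=W(k)/p^{n}$ and $I_{k,n}=pW_{n}(k)\cong W(k)/p^{n-1}$, which is killed by $p^{n-1}$ and hence not flat. The identification $H^{j}(X_{n},I_{R,n}\otimes\Omega^{i})\cong I_{R,n}\otimes H^{j}(X_{n},\Omega^{i})$ is nevertheless correct, but it comes from (A1) via cohomology and base change, not from flatness.

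Second, and more seriously, your degeneration argument only establishes $d_{1}=0$. You then assert ``the spectral sequence degenerates at $E_{1}$'', but that requires $d_{r}=0$ for all $r\geq 1$. Your proposed fix, comparing with the Hodge--de Rham spectral sequence via the morphism of complexes that multiplies by $p^{r-1-i}$ in degree $i<r$, does not work: this morphism is \emph{not} injective on the $E_{1}$-page, precisely because $W_{n}(R)$ has $p$-torsion, so vanishing of higher differentials on the target does not pull back. The paper avoids this entirely: it quotes \cite[Propositions 3.1 \& 3.2]{LZ07} for degeneration of the auxiliary complex $\Omega^{\bullet}_{p,r,X_{n}}$ (same $p$-twisted differentials but no $I_{R,n}$) and then controls $\mathcal{F}^{r}\Omega^{\bullet}$ via the short exact sequence of complexes
\[
0\longrightarrow \mathcal{F}^{r}\Omega_{X_{n}/W_{n}(R)}^{\bullet}\longrightarrow\Omega^{\bullet}_{p,r,X_{n}}\longrightarrow\sigma_{\leq r-1}\Omega^{\bullet}_{p,r,X}\longrightarrow 0
\]
whose third term lives over $X=X_{1}$.

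Third, your splitting step fails for the same reason. Even granting degeneration, the graded pieces in internal degree $i<r$ are $I_{R,n}\otimes L_{i}$, and since $I_{R,n}$ is not flat these are not projective $W_{n}(R)$-modules; freeness of $L_{i}$ alone does not force the extensions to split. The paper handles this by constructing an explicit section: the splitting $L_{i}\to\mathbb{H}^{m}(X_{n},\Omega^{\geq i}_{p,r,X_{n}})$ coming from \cite{LZ07} restricts to a map $I_{R,n}\otimes L_{i}\to\mathbb{H}^{m}(X_{n},\Omega^{\geq i}_{p,r,X_{n}})$ whose composite into $\mathbb{H}^{m}(X,\sigma_{\leq r-1}\Omega^{\geq i}_{p,r,X})$ vanishes (it factors through $I_{R,n}\otimes L_{i}\to L_{i}\to L_{i}\otimes_{W_{n}(R)}R$, which is zero), and hence lands in $\mathbb{H}^{m}(X_{n},\mathcal{F}^{r}\Omega^{\geq i}_{X_{n}/W_{n}(R)})$ by the long exact sequence. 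This is the step your outline is missing.
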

\begin{proof}
Consider the complex $\Omega^{\bullet}_{p,r,X_{n}}$
\begin{equation*}
\mathcal{O}_{X_{n}}\xrightarrow{pd}\Omega^{1}_{X_{n}/W_{n}(R)}\xrightarrow{pd}\cdots\xrightarrow{pd}\Omega^{r-1}_{X_{n}/W_{n}(R)}\xrightarrow{d}\Omega^{r}_{X_{n}/W_{n}(R)}\xrightarrow{d}\cdots\,.
\end{equation*}
By \cite[Propositions 3.1 \& 3.2]{LZ07}, the hypercohomology spectral sequence of this complex degenerates and we have a direct sum decomposition
\begin{equation*}
\mathbb{H}^{m}(
X_{n},\Omega^{\bullet}_{p,r,X_{n}})\simeq H_{\mathrm{dR}}^{m}(X_{n}/W_{n}(R))\simeq\bigoplus_{j\geq 0}H^{m-j}(X_{n},\Omega_{X_{n}/W_{n}(R)}^{j})\,.
\end{equation*}
We have an exact sequence of complexes
\begin{equation}
\mathcal{F}^{r}\Omega_{X_{n}/W_{n}(R)}^{\bullet}\rightarrow\Omega^{\bullet}_{p,r,X_{n}}\rightarrow\sigma_{\leq r-1}\Omega^{\bullet}_{p,r,X}\,.
\end{equation}
The hypercohomology spectral sequence of the complex $\sigma_{\leq r-1}\Omega^{\bullet}_{p,r,X}=\sigma_{\leq r-1}\Omega^{\bullet}_{p,r,X_{1}}$ degenerates and leads again to a direct sum decomposition
\begin{equation*}
\mathbb{H}^{m}(X,\sigma_{\leq r-1}\Omega^{\bullet}_{p,r,X})\simeq\bigoplus_{j=0}^{r-1}H^{m-j}(X,\Omega_{X/R}^{j})\,.
\end{equation*} 
Evidently the above exact sequence remains exact after truncation:
\begin{equation}
\mathcal{F}^{r}\Omega_{X_{n}/W_{n}(R)}^{\geq j}\rightarrow\Omega^{\geq j}_{p,r,X_{n}}\rightarrow\sigma_{\leq r-1}\Omega^{\geq j}_{p,r,X}\,.
\end{equation}
Taking cohomology and using base change for de Rham cohomology, we get an exact sequence
\begin{equation*}
0\rightarrow\mathbb{H}^{m}(X_{n},\mathcal{F}^{r}\Omega_{X_{n}/W_{n}(R)}^{\geq j})\rightarrow\mathbb{H}^{m}(X_{n},\Omega^{\geq j}_{p,r,X_{n}})\rightarrow\mathbb{H}^{m}(X,\sigma_{\leq r-1}\Omega^{\geq j}_{p,r,X})\rightarrow 0\,.
\end{equation*}
We have a canonical map 
\begin{equation*}
I_{R,n}H^{m-j}(X_{n},\Omega_{X_{n}/W_{n}(R)}^{j})\rightarrow H^{m-j}(X_{n},\Omega_{X_{n}/W_{n}(R)}^{j})\rightarrow\mathbb{H}^{m}(X_{n},\Omega^{\geq j}_{p,r,X_{n}})
\end{equation*}
induced by the splitting of $\mathbb{H}^{m}(X_{n},\Omega^{\geq j}_{p,r,X_{n}})\rightarrow H^{m-j}(X_{n},\Omega_{X_{n}/W_{n}(R)}^{j})$ and the composite with $\mathbb{H}^{m}(X_{n},\Omega^{\geq j}_{p,r,X_{n}})\rightarrow \mathbb{H}^{m}(X_{n},\sigma_{\leq r-1}\Omega^{\geq j}_{p,r,X})$ is the zero map. Hence we get an induced map $I_{R,n}H^{m-j}(X_{n},\Omega_{X_{n}/W_{n}(R)}^{j})\rightarrow\mathbb{H}^{m}(X_{n},\mathcal{F}^{r}\Omega_{X_{n}/W_{n}(R)}^{\geq j})$ which splits the surjection $\mathbb{H}^{m}(X_{n},\mathcal{F}^{r}\Omega_{X_{n}/W_{n}(R)}^{\geq j})\rightarrow I_{R,n}H^{m-j}(X_{n},\Omega_{X_{n}/W_{n}(R)}^{j})$. The degeneracy of the hypercohomology spectral sequence associated to $\mathcal{F}^{r}\Omega_{X_{n}/W_{n}(R)}^{\bullet}$ (again by \cite[Propositions 3.1 \& 3.2]{LZ07}) then yields the direct sum decomposition for $\mathbb{H}^{m}(\mathcal{F}^{r}\Omega_{X_{n}/W_{n}(R)}^{\bullet})$. 

\end{proof}

Now pass to the projective limit. One defines Frobenius-linear maps $\Phi_{r}:L_{r}\rightarrow P_{0}$ by $\Phi_{r}:=F_{r}|_{L_{r}}$, where $L_{r}=H^{m-r}(\mathcal{X},\Omega^{r}_{\mathcal{X}/W(R)})$ and $\mathcal{X}=\varinjlim_{n}X_{n}$. It is shown in \cite[Theorem 1.1 a)]{GL20}, \cite[Theorem 5.7]{LZ07}, that for $m<p$, the map
\begin{equation*}
\bigoplus_{i=0}^{m}\Phi_{i}\,:\, P_{0}=\bigoplus_{i=0}^{m}L_{i}\rightarrow P_{0}=\bigoplus_{i=0}^{m}L_{i}
\end{equation*}
is a Frobenius-linear isomorphism, and hence we have
\begin{thm}
Let $m<p$. The predisplay $\mathcal{P}=(P_{i},\iota_{i},\alpha_{i},F_{i})$ is a display; it is isomorphic to a display of degree $m$ given by standard data (see \cite[page 149]{LZ07} and \cite[Appendix]{GL20}). 
\end{thm}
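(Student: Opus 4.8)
The plan is to identify $\mathcal{P}$ with the display attached to explicit standard data $(L_0,\ldots,L_m;\Phi_0,\ldots,\Phi_m)$ and then to deduce that it is a display (and not merely a separated predisplay) from the fact, already supplied by \cite[Theorem 1.1 a)]{GL20} and \cite[Theorem 5.7]{LZ07}, that $\bigoplus_i\Phi_i$ is a Frobenius-linear isomorphism. First I would upgrade the preceding Lemma from finite level to the projective limit. Since each $H^{m-i}(X_n,\Omega^i_{X_n/W_n(R)})$ is a free $W_n(R)$-module by (A1), the transition maps in $n$ are surjective, the system is Mittag-Leffler (so $\varprojlim^1$ vanishes and cohomology commutes with the limit), and the direct-sum decomposition of the Lemma passes to the limit to give
\[
P_r\cong I_R L_0\oplus\cdots\oplus I_R L_{r-1}\oplus L_r\oplus\cdots\oplus L_m,
\]
where now $L_i=H^{m-i}(\mathcal{X},\Omega^i_{\mathcal{X}/W(R)})$ is a finitely generated projective $W(R)$-module. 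In particular $P_0=\bigoplus_{i=0}^m L_i$, and because $L_i=0$ for $i>m$ the top nonzero module is $L_m$; this is what I would use to read off that the predisplay has degree $m$.

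Next I would match the intrinsic structure maps $\iota_r,\alpha_r,F_r$ with the combinatorial maps of the standard data under these identifications. The chain-level maps $\hat\iota_r$ and $\hat\alpha_r$ are by construction the natural inclusion and the multiplication $I_R\otimes(-)\to(-)$ on the $r$-th graded piece (and the identity elsewhere); after passing to cohomology and to the limit this shows that $\iota_r$ is the inclusion $I_R L_r\hookrightarrow L_r$ on the $r$-th summand and that $\alpha_r$ is multiplication $I_R\otimes L_r\to I_R L_r$ there. For the Frobenius, recall that $\hat F_r$ acts as $p^{\,j-r}F$ in cohomological degree $j\geq r$; hence on the summand $L_j$ with $j\geq r$ the induced map satisfies $F_r|_{L_j}=p^{\,j-r}\Phi_j$ with $\Phi_j:=F_j|_{L_j}$, while axiom II) ($F_{r+1}\circ\alpha_r=\tilde F_r$) determines $F_r$ on the summands $I_R L_i$ with $i<r$. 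This is precisely the recipe defining the display associated to standard data $(L_i,\Phi_i)$ in \cite[page 149]{LZ07} and \cite[Appendix]{GL20}, so I obtain a morphism of predisplays from $\mathcal{P}$ to the standard one that is an isomorphism on each $P_r$.

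Finally, to conclude that $\mathcal{P}$ is a display, I would invoke the cited invertibility: for $m<p$ the total map $\bigoplus_{i=0}^m\Phi_i:P_0\to P_0$ is a Frobenius-linear isomorphism. This is exactly the condition that the standard data $(L_i,\Phi_i)$ define a display of degree $m$, and transporting it along the isomorphism of the previous paragraph shows that $\mathcal{P}$ is such a display.

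The main obstacle I anticipate is the bookkeeping in the second step: one must check that the derived-category isomorphism of Theorem \ref{filtered comparison}, the degeneration and splitting of the Lemma, and the projective limit are all simultaneously compatible with the three families of structure maps. In particular I expect the delicate point to be that the $p$-power twists built into $\hat F_r$ (the factor $p^{\,j-r}$ in degree $j$) match the normalization of the Frobenius in the standard data and are consistent with the derived relation III), $F_r\circ\iota_r=pF_{r+1}$; indeed for $y\in L_j$ with $j\geq r+1$ one has $F_r(\iota_r y)=p^{\,j-r}\Phi_j=p\cdot p^{\,j-(r+1)}\Phi_j=pF_{r+1}(y)$, and this consistency must be verified uniformly. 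Once the identification of structure maps is in place the conclusion is formal, since the only nontrivial analytic input, the invertibility of $\bigoplus_i\Phi_i$, is supplied by the cited theorems.
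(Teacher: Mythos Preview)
Your proposal is correct and follows exactly the approach the paper takes: the paper does not give an independent proof of this theorem but deduces it directly from the decomposition $P_r\cong I_R L_0\oplus\cdots\oplus I_R L_{r-1}\oplus L_r\oplus\cdots\oplus L_m$ (the preceding Lemma passed to the limit) together with the fact, cited from \cite[Theorem 1.1 a)]{GL20} and \cite[Theorem 5.7]{LZ07}, that $\bigoplus_i\Phi_i$ is a Frobenius-linear isomorphism. Your write-up simply spells out the bookkeeping (compatibility of $\iota_r,\alpha_r,F_r$ with the standard-data maps and the Mittag-Leffler passage to the limit) that the paper leaves implicit in the phrase ``and hence we have''.
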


In our results on the Hodge-Witt decomposition, it turns out that the display defined on $H_{\rm{cris}}^{n}(X/W(R))$ is a direct sum of twisted multiplicative displays. We recall the definitions.

A $3n$-display $(P,Q,F,V^{-1})$ as defined in \cite[Definition 1]{Zin02} gives rise to a (pre-)display of degree $1$ with $P_{0}=P$, $P_{1}=Q$, $F_{0}=F$ and $F_{1}=V^{-1}$. (Note that, by definition, $I_{R}P\subset Q$, there exists a direct sum decomposition of $W(R)$-modules $P=L\oplus T$ with $Q=L\oplus I_{R}T$ and $V^{-1}$ is an $\tensor[^F]{}{}$-linear isomorphism). 

In the `degenerate' case $(P,Q,F_{0},F_{1})$ with $Q=I_{R}P$ and $F_{1}$ bijective, we call $(P,Q,F_{0},F_{1})$ a multiplicative display (see \cite[\S6]{Mes07}). A special example is the unit display $P_{0}=W(R)$, $P_{1}=I_{R}$, $F_{0}=\tensor[^F]{}{}$ the Frobenius on $W(R)$ and $F_{1}=\tensor[^{V^{-1}}]{}{}$. A multiplicative display (and hence a unit display) has degree $0$. We can extend a multiplicative display to a display $(P_{i},\iota_{i},\alpha_{i},F_{i})$ by setting 
\[ P_{i}=\begin{cases} 
      P & \text{for }i=0 \\
      I_{R}P & \text{for }i\geq 1\,,
   \end{cases}
\]
\[ \iota_{i}=\begin{cases} 
      I_{R}P\hookrightarrow P & \text{for }i=1 \\
      I_{R}P\xrightarrow{p}I_{R}P & \text{for }i\geq 2\,,
   \end{cases}
\]
\[ \alpha_{i}=\begin{cases} 
      I_{R}P\xrightarrow{\mathrm{id}}I_{R}P & \text{for }i=0 \\
      I_{R}\otimes I_{R}P\rightarrow I_{R}P\,;\, \tensor[^V]{\xi}{}\otimes\tensor[^V]{\mu}{}x\mapsto\tensor[^V]{(\xi\mu)}{}x & \text{for }i\geq 1\,,
   \end{cases}
\]
and
\begin{equation*}
F_{i}=F_{1}\text{ for all }i\geq 1\,.
\end{equation*}
For $i\geq 1$, $\alpha_{i}$ is called Verj\"{u}ngung \cite[pp 155-156]{LZ07}. Since the Verj\"{u}ngung is surjective, an extension of a multiplicative display as above is still of degree $0$.

For a multiplicative display $(P_{0},P_{1},F,F_{1})=\mathcal{P}$ we can define the $(-1)$-fold Tate twist as a $1$-display by the data 
\begin{equation*}
\mathcal{P}(-1)=(P'_{i},\iota'_{i},\alpha'_{i}, F'_{i})
\end{equation*}
where for $i\geq 1$ $P'_{i}=P_{i-1}$, $\iota'_{i}=\iota_{i-1}$, $\alpha'_{i}=\alpha_{i-1}$ and $F'_{i}=F_{i-1}$, $P'_{0}=P_{0}=P'_{1}$, $F'_{0}=pF_{0}$, $\iota'_{0}=\mathrm{id}_{P_{0}}$ and $\alpha'_{0}=I_{R}\otimes P_{0}\rightarrow P_{0}$ is the multiplication map. The underlying data
\begin{equation*}
\mathcal{P}^{{\rm\acute{e}t}}=(P=P_{0},Q=P_{0},pF_{0},F_{0})
\end{equation*}
form an \'{e}tale display (for the definition of \'{e}tale displays see also \cite[\S6]{Mes07}). Note that by definition the map $F'_{2}$ in $\mathcal{P}(-1)$ is bijective. We can iterate the construction to define the $(-n)$-fold Tate twist $\mathcal{P}(-n)$ for any $n\geq 0$. It is a display of degree $n$.

The Hodge-Witt decomposition of $H_{\mathrm{cris}}^{m}(X/W(R))$ yields an alternative description as display given by standard data. Since the Frobenius on the de Rham-Witt complex $W\Omega_{X/R}^{\bullet}$ (the crystalline Frobenius) is defined by $p^{i}F$ on $W\Omega^{i}_{X/R}$ where $F$ is the Frobenius on $W\Omega^{i}_{X/R}$, we will consider the $(-i)$-fold Tate twist of $H^{m-i}(X,W\Omega_{X/R}^{i})$ which turns out to be a multiplicative display. In order to identify the display $H^{m}_{\mathrm{cris}}(X/W(R))$ as a direct sum of displays given by the standard data $H^{m-i}(X,W\Omega_{X/R}^{i})$, we tacitly apply an extension of the $(-i)$-fold Tate twist of $P_{0}=H^{m-i}(X,W\Omega_{X/R}^{i})$ (a display of degree $i$) by `adding' maps $\iota_{j}:P_{j}\xrightarrow{p}P_{j-1}$, $P_{j-1}=I_{R}P_{0}$, $F_{j}:P_{j}\rightarrow P_{0}$ given by $\tilde{F}:\tensor[^V]{\xi}{}x\mapsto \xi Fx$ and $\alpha_{j}$ the Verj\"{u}ngung, for $i+2\leq j\leq m$. In the cases where we prove the Hodge-Witt decomposition we will not explicitly mention this extension again, but only state the Hodge-Witt decomposition as a direct sum decomposition of Tate-twisted multiplicative displays.

\section{Constructing a map from Hodge to Hodge-Witt cohomology}

In this section we draw some consequences of the comparison between the Nygaard complex and the complex $\mathcal{F}^{r}\Omega_{X_{n}/W_{n}(R)}^{\bullet}$ (Theorem \ref{filtered comparison}) under the additional assumptions (A1) and (A2), and construct a map
\begin{equation*}
\alpha_{r}:H^{n-r}(X_{\bigcdot},\Omega_{X_{\bigcdot}/W_{\bigcdot}(R)}^{r})\rightarrow H^{n-r}(X,W_{\bigcdot}\Omega_{X/R}^{r})
\end{equation*}
which for $r<p$ maps $I_{R}H^{n-r}(X_{\bigcdot},\Omega_{X_{\bigcdot}/W_{\bigcdot}(R)}^{r})$ to $VH^{n-r}(X,W_{\bigcdot}\Omega_{X/R}^{r})\subset H^{n-r}(X,W_{\bigcdot}\Omega_{X/R}^{r})$ and in many cases turns out to be an isomorphism which induces the degeneracy of the Hodge-Witt spectral sequence. 

We consider the situation at the beginning of the proof of Theorem \ref{filtered comparison} in \cite{Lan18}: assume there exists a closed embedding $i_{n}:X_{n}\hookrightarrow Z_{n}$ such that $Z_{n}$ is a Witt lift of $Z=Z_{n}\times_{W_{n}(R)}R$. Let $D_{n}$ be the PD-envelope of the embedding $i_{n}$ and let $\mathcal{J}$ be the divided power ideal sheaf. By \cite[Theorem 7.2]{BO78} we have a quasi-isomorphism
\begin{equation*}
\Omega^{\geq r}_{X_{n}/W_{n}(R)}\simeq\left(\mathcal{J}^{[r]}\rightarrow\mathcal{J}^{[r-1]}\Omega^{1}_{D_{n}/W_{n}(R)}\rightarrow\cdots\rightarrow\Omega^{r}_{D_{n}/W_{n}(R)}\rightarrow\cdots\xrightarrow{d}\cdots\right)\,.
\end{equation*}
The Witt lift comes with a canonical map $\mathcal{O}_{Z_{n}}\xrightarrow{\mathfrak{S}}W_{n}\mathcal{O}_{X}$ inducing $\mathcal{O}_{D_{n}}\xrightarrow{\mathfrak{S}}W_{n}\mathcal{O}_{X}$ and hence a map, also denoted by $\mathfrak{S}$:
\begin{equation*}
\mathfrak{S}:\Omega^{\geq r}_{D_{n}/W_{n}(R)}\rightarrow W_{n}\Omega^{\geq r}_{X/R}\,.
\end{equation*}
We consider the composite map in cohomology
\begin{align}\label{alpha r}
\alpha_{r}:H^{n-r}(X_{\bigcdot},\Omega_{X_{\bigcdot}/W_{\bigcdot}(R)}^{r})\rightarrow \nonumber
& \mathbb{H}^{n-r}(X_{\bigcdot},\Omega^{\geq r}_{X_{\bigcdot}/W_{\bigcdot}(R)})\rightarrow\mathbb{H}^{n-r}(D_{\bigcdot},\Omega^{\geq r}_{D_{\bigcdot}/W_{\bigcdot}(R)}) \\
& \rightarrow\mathbb{H}^{n-r}(X,W_{\bigcdot}\Omega^{\geq r}_{X/R})\rightarrow H^{n-r}(X,W_{\bigcdot}\Omega^{r}_{X/R})\,.
\end{align}

In the following we construct a map 
\begin{equation}\label{IR map}
I_{R}H^{n-r}(X_{\bigcdot},\Omega^{r}_{X_{\bigcdot}/W_{\bigcdot}(R)})\rightarrow H^{n-r}(X,W_{\bigcdot}\Omega^{r}_{X/R})
\end{equation}
such that the image is contained in the image of $V$ and which is compatible with $\alpha_{r}$. Recall the following diagram of complexes from \cite[(1.3)]{Lan18} (we omit the subscript $D_{n}$ to keep notation light):
\begin{equation}
\label{big diagram}
\begin{adjustbox}{width=12cm}
\begin{tikzpicture}[descr/.style={fill=white,inner sep=1.5pt}]
        \matrix (m) [
            matrix of math nodes,
            row sep=1.5em,
            column sep=1em,
            text height=1.5ex, text depth=0.25ex
        ]
        {  I_{R,n}\mathcal{O} & \ & \ & \ & \ & \ & \ & \ \\
           I_{R,n}\mathcal{J} & I_{R,n}\Omega^1 & \ & \ & \ & \ & \ & \ \\
           \vdots & \vdots & \ddots & \ & \ & \ & \ & \ \\
           I_{R,n}\mathcal{J}^{[r-3]} & I_{R,n}\mathcal{J}^{[r-2]}\Omega^{1} & \cdots & I_{R,n}\Omega^{r-3} & \ & \ & \ & \ \\
           I_{R,n}\mathcal{J}^{[r-2]} & I_{R,n}\mathcal{J}^{[r-2]}\Omega^1 & \cdots & I_{R,n}\mathcal{J}\Omega^{r-3} & I_{R,n}\Omega^{r-2} & \ & \ & \ \\
          I_{R,n}\mathcal{J}^{[r-1]} & I_{R,n}\mathcal{J}^{[r-2]}\Omega^1 & \cdots & I_{R,n}\mathcal{J}^{[2]}\Omega^{r-3} & I_{R,n}\mathcal{J}\Omega^{r-2} & I_{R,n}\Omega^{r-1} & \ \\
          \mathcal{J}^{[r]} & \mathcal{J}^{[r-1]}\Omega^1 & \cdots & \mathcal{J}^{[3]}\Omega^{r-3} & \mathcal{J}^{[2]}\Omega^{r-2} & \mathcal{J}\Omega^{r-1} & \Omega^{r} & \cdots \\
        };

        \path[overlay,->, font=\scriptsize]
        (m-1-1) edge node [above right] {$pd$} (m-2-2)
        (m-2-2) edge node [above right] {$pd$} (m-3-3)
        (m-3-3) edge node [above right] {$pd$} (m-4-4)
        (m-4-4) edge node [above right] {$pd$} (m-5-5)
        (m-5-5) edge node [above right] {$pd$} (m-6-6)
        (m-6-6) edge node [above right] {$d$} (m-7-7);
        
        \path[overlay,->,font=\scriptsize]
        (m-2-1) edge (m-2-2)
        (m-4-1) edge node [above] {$d$} (m-4-2)
        (m-4-2) edge node [above] {$d$} (m-4-3)
        (m-4-3) edge node [above] {$d$} (m-4-4)
        (m-5-1) edge node [above] {$d$} (m-5-2)
        (m-5-2) edge node [above] {$d$} (m-5-3)
        (m-5-3) edge node [above] {$d$} (m-5-4)
        (m-5-4) edge node [above] {$d$} (m-5-5)
        (m-6-1) edge node [above] {$d$} (m-6-2)
        (m-6-2) edge node [above] {$d$} (m-6-3)
        (m-6-3) edge node [above] {$d$} (m-6-4)
        (m-6-4) edge node [above] {$d$} (m-6-5)
        (m-6-5) edge node [above] {$d$} (m-6-6)
        (m-7-1) edge node [above] {$d$} (m-7-2)
        (m-7-2) edge node [above] {$d$} (m-7-3)
        (m-7-3) edge node [above] {$d$} (m-7-4)
        (m-7-4) edge node [above] {$d$} (m-7-5)
        (m-7-5) edge node [above] {$d$} (m-7-6)
        (m-7-6) edge node [above] {$d$} (m-7-7)
        (m-7-7) edge node [above] {$d$} (m-7-8);       
       
\end{tikzpicture}
\end{adjustbox}
\end{equation}
As explained in the proof of Theorem 1.2 in \cite{Lan18}, the sum of the two lower horizontal sequences is quasi-isomorphic to 
\begin{equation*}
0\rightarrow 0\rightarrow\cdots\rightarrow I_{R,n}\Omega_{X_{n}/W_{n}(R)}^{r-1}\xrightarrow{d}\Omega_{X_{n}/W_{n}(R)}^{r}\xrightarrow{d}\Omega_{X_{n}/W_{n}(R)}^{r+1}\xrightarrow{d}\cdots\,.
\end{equation*}
For any $k\leq r$, the degree-wise sum of the $k+1$ lower horizontal sequences is quasi-isomorphic to the truncated complex 
\begin{equation*}
0\rightarrow 0\rightarrow\cdots\rightarrow I_{R,n}\Omega_{X_{n}/W_{n}(R)}^{r-k}\xrightarrow{pd}\cdots\xrightarrow{pd}I_{R,n}\Omega_{X_{n}/W_{n}(R)}^{r-1}\xrightarrow{d}\Omega_{X_{n}/W_{n}(R)}^{r+1}\xrightarrow{d}\cdots
\end{equation*}
(see \cite[(1.4)]{Lan18}). Summing up all horizontal sequences degree-wise yields a complex $\mathrm{Fil}^{r}\Omega_{D_{n}/W_{n}(R)}^{\bullet}$ which is quasi-isomorphic to $\mathcal{F}^{r}\Omega_{X_{n}/W_{n}(R)}^{\bullet}$.

The splitting of the map
\begin{align*}
\mathbb{H}^{n}(0\rightarrow\cdots\rightarrow I_{R,n}\Omega_{X_{n}/W_{n}(R)}^{r-k}\xrightarrow{pd} & \cdots\xrightarrow{pd}I_{R,n}\Omega_{X_{n}/W_{n}(R)}^{r-1}\xrightarrow{d}\Omega_{X_{n}/W_{n}(R)}^{r+1}\xrightarrow{d}\cdots) \\
& \rightarrow I_{R,n}H^{n-(r-k)}(X_{n},\Omega_{X_{n}/W_{n}(R)}^{r-k})
\end{align*}
induces an isomorphism
\begin{equation*}
I_{R,n}H^{n-(r-k)}(X_{n},\Omega_{X_{n}/W_{n}(R)}^{r-k})\simeq\mathbb{H}^{n}(I_{R,n}\mathcal{J}^{[r-k]}\rightarrow I_{R,n}\mathcal{J}^{[r-k-1]}\Omega^{1}_{D_{n}/W_{n}(R)}\rightarrow\cdots\rightarrow I_{R,n}\Omega_{D_{n}/W_{n}(R)}^{r-k})
\end{equation*}
which only depends on $r-k$, not on $r$ or $k$. The morphism of complexes
\begin{equation*}
\begin{tikzpicture}[descr/.style={fill=white,inner sep=1.5pt}]
        \matrix (m) [
            matrix of math nodes,
            row sep=1.5em,
            column sep=1em,
            text height=1.5ex, text depth=0.25ex
        ]
        {  I_{R,n}\mathcal{J}^{[r-k]} & \cdots & I_{R,n}\Omega_{D_{n}/W_{n}(R)}^{r-k} & \tensor*[^V]{\xi}{}\omega \\
\ & \ & W_{n-1}\Omega_{X/R}^{r-k} & \xi\tensor*[^F]{(\mathfrak{S}\omega)}{} \\
        };

        \path[overlay,->, font=\scriptsize]
        (m-1-1) edge (m-1-2)
        (m-1-2) edge (m-1-3)
        (m-1-3) edge (m-2-3);
        
        \path[overlay,|->, font=\scriptsize]
        (m-1-4) edge (m-2-4);               
       
\end{tikzpicture}
\end{equation*}
yields a canonical map 
\begin{equation}
\label{F alpha}
\tensor*[^F]{\alpha}{_{r-k}}:I_{R}H^{n-(r-k)}(X_{\bigcdot},\Omega_{X_{\bigcdot}/W_{\bigcdot}(R)}^{r-k})\rightarrow H^{n-(r-k)}(X,W_{\bigcdot}\Omega_{X/R}^{r-k})
\end{equation}
which after composing with $V$ gives the map
\begin{equation}\label{restriction}
\alpha_{r-k}:I_{R}H^{n-(r-k)}(X_{\bigcdot},\Omega_{X_{\bigcdot}/W_{\bigcdot}(R)}^{r-k})\rightarrow H^{n-(r-k)}(X,W_{\bigcdot}\Omega_{X/R}^{r-k})
\end{equation}
which is the restriction of $\alpha_{r-k}$ in \eqref{alpha r} and has image in $VH^{n-(r-k)}(X,W_{\bigcdot}\Omega_{X/R}^{r-k})$.

In the absence of a global Witt lift we proceed by simplicial methods using the quasi-isomorphisms of simplicial complexes of sheaves
\begin{equation*}
\mathcal{F}^{r}\Omega_{X_{n}^{\bigcdot}/W_{n}(R)}^{\bullet}\xleftarrow{\sim}\mathrm{Fil}^{r}\Omega^{\bullet}_{D_{n}^{\bigcdot}/W_{n}(R)}\xrightarrow{\sim}N^{r}W_{n}\Omega_{X^{\bigcdot}/R}^{\bullet}
\end{equation*}
at the end of the proof of Theorem 1.2 in \cite{Lan18} to construct the map $\alpha_{r}$ with the desired properties. We omit the details here.

By construction we have a commutative diagram for $s<r$
\begin{equation}
\label{comm square}
\begin{tikzpicture}[descr/.style={fill=white,inner sep=1.5pt}]
        \matrix (m) [
            matrix of math nodes,
            row sep=1.5em,
            column sep=1em,
            text height=1.5ex, text depth=0.25ex
        ]
        {  \mathbb{H}^{n-s}(X,N^{r}W_{\bigcdot}\Omega_{X/R}^{\geq s}) & H^{n-s}(X,W_{\bigcdot}\Omega^{s}_{X/R}) \\
\mathbb{H}^{n-s}(X_{\bigcdot},\mathcal{F}^{r}\Omega_{X_{\bigcdot}/W_{\bigcdot}(R)}^{\geq s}) & I_{R}H^{n-s}(X_{\bigcdot},\Omega_{X_{\bigcdot}/W_{\bigcdot}(R)}^{s}) \\
        };

        \path[overlay,->, font=\scriptsize]
        (m-1-1) edge (m-1-2)
        (m-2-1) edge (m-2-2)
        (m-2-1) edge (m-1-1)
        (m-2-2) edge node[right]{$\tensor*[^F]{\alpha}{_{s}}$} (m-1-2);            
       
\end{tikzpicture}
\end{equation}
where the left vertical arrow is induced by truncating the map $\Sigma:\mathrm{Fil}^{r}\Omega_{D_{n}/W_{n}(R)}^{\bullet}\rightarrow N^{r}W_{n}\Omega_{X/R}^{\bullet}$ constructed in \cite[(1.5)]{Lan18}. More precisely, we know that $\mathcal{F}^{r}\Omega_{X_{n}/W_{n}(R)}^{\geq s}$ is quasi-isomorphic to the degree-wise sum of the $(r-s+1)$ lower horizontal sequences in \eqref{big diagram}, denoted by $\mathrm{Fil}^{r}\Omega^{\geq s}_{D_{n}/W_{n}(R)}$, from which we have a canonical map to 
\begin{equation*}
I_{R,n}\Omega_{D_{n}/W_{n}(R)}^{s}\xrightarrow{pd}I_{R,n}\Omega_{D_{n}/W_{n}(R)}^{s-1}\xrightarrow{pd}\cdots\xrightarrow{pd}I_{R,n}\Omega_{D_{n}/W_{n}(R)}^{r-1}\xrightarrow{d}\Omega_{D_{n}/W_{n}(R)}^{r}\xrightarrow{d}\cdots\,.
\end{equation*}
The restriction of the map $\Sigma$ in \cite[(1.5)]{Lan18} to this complex (see \cite[(1.6.3) and (1.6.4)]{Lan18}) then yields by composition a canonical map $\mathcal{F}^{r}\Omega_{X_{\bigcdot}/W(R)}^{\geq s}\rightarrow N^{r}W_{\bigcdot}\Omega_{X/R}^{\geq s}$ which induces the left vertical arrow in the above diagram \eqref{comm square}.

We consider the following morphisms of complexes $N^{r}W_{\bigcdot}\Omega_{X/R}^{\bullet}\rightarrow W_{\bigcdot}\Omega^{\bullet}_{X/R}$ and $\mathcal{F}^{r}\Omega_{X_{\bigcdot}/W_{\bigcdot}(R)}^{\bullet}\rightarrow\Omega_{X_{\bigcdot}/W_{\bigcdot}(R)}^{\bullet}$ given as follows:
\begin{equation*}
\begin{tikzpicture}[descr/.style={fill=white,inner sep=1.5pt}]
        \matrix (m) [
            matrix of math nodes,
            row sep=2em,
            column sep=1.5em,
            text height=1.5ex, text depth=0.25ex
        ]
        {  W_{\bigcdot}\mathcal{O}_{X} & W_{\bigcdot}\Omega_{X/R}^{1} & \cdots & W_{\bigcdot}\Omega_{X/R}^{r-1} & W_{\bigcdot}\Omega_{X/R}^{r} & W_{\bigcdot}\Omega_{X/R}^{r+1} & \cdots \\
 W_{\bigcdot}\mathcal{O}_{X} & W_{\bigcdot}\Omega_{X/R}^{1} & \cdots & W_{\bigcdot}\Omega_{X/R}^{r-1} & W_{\bigcdot}\Omega_{X/R}^{r} & W_{\bigcdot}\Omega_{X/R}^{r+1} & \cdots \\
        };

        \path[overlay,->, font=\scriptsize]
        (m-1-1) edge node[above]{$d$} (m-1-2)
        (m-1-2) edge node[above]{$d$} (m-1-3)
        (m-1-3) edge node[above]{$d$} (m-1-4)
        (m-1-4) edge node[above]{$dV$} (m-1-5)
        (m-1-5) edge node[above]{$d$} (m-1-6)
        (m-1-6) edge node[above]{$d$} (m-1-7)
        (m-2-1) edge node[above]{$d$} (m-2-2)
        (m-2-2) edge node[above]{$d$} (m-2-3)
        (m-2-3) edge node[above]{$d$} (m-2-4)
        (m-2-4) edge node[above]{$d$} (m-2-5)
        (m-2-5) edge node[above]{$d$} (m-2-6)
        (m-2-6) edge node[above]{$d$} (m-2-7)
        (m-1-1) edge node[right]{$p^{r-1}V$}(m-2-1)
        (m-1-2) edge node[right]{$p^{r-2}V$} (m-2-2)
        (m-1-4) edge node[right]{$V$} (m-2-4)
        (m-1-5) edge node[right]{$=$} (m-2-5)
        (m-1-6) edge node[right]{$=$} (m-2-6);            
       
\end{tikzpicture}
\end{equation*}
(it is a morphism of complexes because $Vd=pdV$) and
\begin{equation*}
\begin{tikzpicture}[descr/.style={fill=white,inner sep=1.5pt}]
        \matrix (m) [
            matrix of math nodes,
            row sep=2em,
            column sep=1em,
            text height=1.5ex, text depth=0.25ex
        ]
        {  I_{R}\mathcal{O}_{X_{\bigcdot}} & I_{R}\Omega_{X_{\bigcdot}/W_{\bigcdot}(R)}^{1} & \cdots & I_{R}\Omega_{X_{\bigcdot}/W_{\bigcdot}(R)}^{r-1} & \Omega_{X_{\bigcdot}/W_{\bigcdot}(R)}^{r} & \Omega_{X_{\bigcdot}/W_{\bigcdot}(R)}^{r+1} & \cdots \\
 \mathcal{O}_{X_{\bigcdot}} & \Omega_{X_{\bigcdot}/W_{\bigcdot}(R)}^{1} & \cdots & \Omega_{X_{\bigcdot}/W_{\bigcdot}(R)}^{r-1} & \Omega_{X_{\bigcdot}/W_{\bigcdot}(R)}^{r} & \Omega_{X_{\bigcdot}/W_{\bigcdot}(R)}^{r+1} & \cdots \\
        };

        \path[overlay,->, font=\scriptsize]
        (m-1-1) edge node[above]{$pd$} (m-1-2)
        (m-1-2) edge node[above]{$pd$} (m-1-3)
        (m-1-3) edge node[above]{$pd$} (m-1-4)
        (m-1-4) edge node[above]{$d$} (m-1-5)
        (m-1-5) edge node[above]{$d$} (m-1-6)
        (m-1-6) edge node[above]{$d$} (m-1-7)
        (m-2-1) edge node[above]{$d$} (m-2-2)
        (m-2-2) edge node[above]{$d$} (m-2-3)
        (m-2-3) edge node[above]{$d$} (m-2-4)
        (m-2-4) edge node[above]{$d$} (m-2-5)
        (m-2-5) edge node[above]{$d$} (m-2-6)
        (m-2-6) edge node[above]{$d$} (m-2-7)
        (m-1-1) edge node[right]{$\times p^{r-1}$}(m-2-1)
        (m-1-2) edge node[right]{$\times p^{r-2}$} (m-2-2)
        (m-1-5) edge node[right]{$=$} (m-2-5)
        (m-1-6) edge node[right]{$=$} (m-2-6);

        \path[overlay, right hook->, font=scriptsize]
        (m-1-4) edge (m-2-4);        
       
\end{tikzpicture}
\end{equation*}
where the vertical map $\times p^{i}$ is the inclusion of $I_{R}\Omega_{X_{\bigcdot}/W_{\bigcdot}(R)}^{j}$ into $\Omega_{X_{\bigcdot}/W_{\bigcdot}(R)}^{j}$ multiplied by $p^{i}$.

The construction of the comparison isomorphism $\mathcal{F}^{r}\Omega_{X_{\bigcdot}/W_{\bigcdot}(R)}^{\bullet}\rightarrow N^{r}W_{\bigcdot}\Omega_{X/R}^{\bullet}$ via the map $\Sigma:\mathrm{Fil}^{r}\Omega_{D_{\bigcdot}/W_{\bigcdot}(R)}^{\bullet}\rightarrow N^{r}W_{\bigcdot}\Omega_{X/R}^{\bullet}$ in \cite[(1.5)]{Lan18} and the comparison between $\Omega_{X_{\bigcdot}/W_{\bigcdot}(R)}^{\bullet}$ and $W_{\bigcdot}\Omega_{X/R}^{\bullet}$ show that the induced homomorphisms on hypercohomology 
\begin{equation*}
\lambda_{1}:\mathbb{H}^{\ast}(X,N^{r}W_{\bigcdot}\Omega^{\bullet}_{X/R})\rightarrow\mathbb{H}^{\ast}(X,W_{\bigcdot}\Omega^{\bullet}_{X/R})=H_{\mathrm{cris}}^{\ast}(X/W_{\bigcdot}(R))
\end{equation*}
and 
\begin{equation*}
\lambda_{2}:\mathbb{H}^{\ast}(X_{\bigcdot},\mathcal{F}^{r}\Omega^{\bullet}_{X_{\bigcdot}/W_{\bigcdot}(R)})\rightarrow\mathbb{H}^{\ast}(X_{\bigcdot},\Omega^{\bullet}_{X_{\bigcdot}/W_{\bigcdot}(R)})=H_{\mathrm{dR}}^{\ast}(X_{\bigcdot}/W_{\bigcdot}(R))
\end{equation*}
agree. Then we shall use the following argument in various proofs by induction later on:

Assume the maps $\alpha_{s}:H^{i}(X_{\bigcdot},\Omega_{X_{\bigcdot}/W_{\bigcdot}(R)}^{s})\rightarrow H^{i}(X,W_{\bigcdot}\Omega_{X/R}^{s})$ are isomorphisms, the map $V$ is injective on $H^{i}(X,W_{\bigcdot}\Omega_{X/R}^{s})$ and hence the maps $\tensor[^F]{\alpha}{_s}$ in \eqref{F alpha} are bijective for $s<r$. Then truncation induces a commutative diagram
\begin{equation}
\label{comm square 2}
\begin{tikzpicture}[descr/.style={fill=white,inner sep=1.5pt}]
        \matrix (m) [
            matrix of math nodes,
            row sep=2.5em,
            column sep=1.5em,
            text height=1.5ex, text depth=0.25ex
        ]
        {  \mathbb{H}^{i}(X_{\bigcdot},\mathcal{F}^{r+1}\Omega_{X_{\bigcdot}/W_{\bigcdot}(R)}^{\geq r}) & \mathbb{H}^{i}(X,N^{r+1}W_{\bigcdot}\Omega^{\geq r}_{X/R}) \\
\mathbb{H}^{i}(X_{\bigcdot},\Omega_{X_{\bigcdot}/W_{\bigcdot}(R)}^{\geq r}) & \mathbb{H}^{i}(X,W_{\bigcdot}\Omega^{\geq r}_{X/R}) \\
        };

        \path[overlay,->, font=\scriptsize]
        (m-1-1) edge node[above]{$\simeq$}(m-1-2)
        (m-2-1) edge node[above]{$\simeq$}(m-2-2)
        (m-1-1) edge node[left]{restriction of $\lambda_{2}$}(m-2-1)
        (m-1-2) edge node[right]{restriction of $\lambda_{1}$} (m-2-2);            
       
\end{tikzpicture}
\end{equation}
with horizontal isomorphisms. Since the hypercohomology spectral sequences of $\mathcal{F}^{r+1}\Omega_{X_{\bigcdot}/W_{\bigcdot}(R)}^{\bullet}$ and $\Omega^{\bullet}_{X_{\bigcdot}/W_{\bigcdot}(R)}$ degenerate, the vertical maps in \eqref{comm square 2} are injective and the cokernels are isomorphic to $H^{i}(X,\Omega_{X/R}^{r})$.
\section{Hodge-Witt cohomology as multiplicative displays}

In this section we derive the proof of Theorem \ref{surfaces}. It relies on the following more general proposition which holds in all examples.

\begin{prop}\label{HW}
Fix a pair $(i,j)$, $0\leq i,j\leq d=\dim X$, where $X$ is any of the schemes in Theorems \ref{surfaces}, \ref{abelian} and \ref{general}.
\begin{enumerate}[(i)]
\item There is an exact sequence induced by the action of $V$ on $W\Omega_{X/R}^{j}$
\begin{equation*}
0\rightarrow H^{i}(X,W\Omega_{X/R}^{j})\xrightarrow{V}H^{i}(X,W\Omega_{X/R}^{j})\rightarrow H^{i}(X,\Omega_{X/R}^{j})\rightarrow 0\,.
\end{equation*}

\item Let $\mathfrak{X}$ be the ind-scheme over the ind-scheme $\mathrm{Spec}\,W_{\bigcdot}(R)$ arising from the compatible family of liftings $X_{n}$. Then there exists a multiplicative display $\mathcal{P}=(P,Q=I_{R}P,F,F_{1})$ over $R$ and a homomorphism $P\xrightarrow{\varsigma}H^{i}(X,W\Omega_{X/R}^{j})$ compatible with the action of Frobenius, where $F$ on the right is induced by the Frobenius on $W\Omega_{X/R}^{j}$, such that $\varsigma(I_{R}P)\subset VH^{i}(X,W\Omega_{X/R}^{j})$ and the induced map
\begin{equation*}
\overline{\varsigma}\,:\, P/I_{R}P\rightarrow H^{i}(X,W\Omega_{X/R}^{j})/\mathrm{im}\,V\cong H^{i}(X,\Omega_{X/R}^{j})
\end{equation*}
is an isomorphism of free $R$-modules.

\item The map $\varsigma$ is an isomorphism.

\end{enumerate}

\begin{rem}
The Frobenius-equivariant map $\varsigma$ in $(ii)$ fits into a commutative diagram
\begin{equation*}
\begin{tikzpicture}[descr/.style={fill=white,inner sep=1.5pt}]
        \matrix (m) [
            matrix of math nodes,
            row sep=4em,
            column sep=2em,
            text height=1.5ex, text depth=0.25ex
        ]
        { P & H^{i}(X,W\Omega_{X/R}^{j}) \\
        Q=I_{R}P & H^{i}(X,W\Omega_{X/R}^{j}) \\};

        \path[overlay,->, font=\scriptsize] 
        (m-1-1) edge node [above]{$\varsigma$} (m-1-2)
        (m-2-1) edge node [above]{$\varsigma$}        (m-2-2)
        (m-2-1) edge node [left]{$F_{1}$} (m-1-1)
        (m-1-2) edge node [right]{$V$} (m-2-2)
         ;
                        
\end{tikzpicture}
\end{equation*}
Indeed, 
\begin{equation*}
V(\varsigma(F_{1}(\tensor*[^V]{\xi}{}x)))=V(\varsigma(\xi Fx))=V\xi F\varsigma(x)=\tensor*[^V]{\xi}{}\varsigma(x)=\varsigma(\tensor*[^V]{\xi}{}x) \,.
\end{equation*}
So $\varsigma(I_{R}P)\subset VH^{i}(X,W\Omega_{X/R}^{j})$ and $\overline{\varsigma}$ is well-defined. Note that $F_{1}$ is a bijection.

In many cases $P$ is isomorphic to $H^{i}(\mathfrak{X},\Omega_{\mathfrak{X}/W(R)}^{j})$.
\end{rem}

In this section we will prove Proposition \ref{HW} for surfaces and the case $i\geq 0$, $j=0$ for abelian schemes.
\end{prop}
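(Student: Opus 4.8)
The plan is to run the same three-step mechanism (i)$\Rightarrow$(ii)$\Rightarrow$(iii) in both families, the decisive input being ordinarity of $X_k$, i.e. the vanishing $H^a(X_k,B\Omega^b_{X_k/k})=0$ for all $a\geq 0$ and $b>0$. First I would reduce everything to the closed fibre: by (A1) the Hodge groups $H^i(\mathfrak X,\Omega^j_{\mathfrak X/W(R)})$ are free over $W(R)$ and commute with base change, so the Hodge--Witt consequences of ordinarity over the perfect field $k$ (due to Illusie--Raynaud and Bloch--Kato, \cite{IR83, BK86}) can be propagated to $R$. I would also record the two structural simplifications special to a surface: the cases $j=0$ and $j=d$ are linked by Serre/Poincar\'e(--Cartier) duality, so it suffices to treat $j<d$, and $j=0$ is governed directly by the Witt-vector sheaf.

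For part (i), when $j=0$ the sheaf sequence $0\to W\mathcal O_X\xrightarrow{V}W\mathcal O_X\to\mathcal O_X\to 0$ is exact with no correction term, so the assertion reduces to the vanishing of the connecting maps in the long exact cohomology sequences of the finite-level sequences $0\to W_n\mathcal O_X\xrightarrow{V}W_{n+1}\mathcal O_X\to\mathcal O_X\to 0$; this is exactly the degeneration of the slope spectral sequence for the ordinary fibre, carried over to $R$ by base change. For $0<j<d$ the cokernel of $V$ on $W\Omega^j_{X/R}$ acquires the usual $dV$-correction, and here ordinarity does the work: the vanishing of $B\Omega^\bullet$-cohomology kills both the correction terms in $\mathrm{coker}\,V$ and the connecting homomorphisms, giving injectivity of $V$ on $H^i(X,W\Omega^j_{X/R})$ together with $\mathrm{coker}\,V\cong H^i(X,\Omega^j_{X/R})$. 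Equivalently, this says that the de Rham--Witt Frobenius $F$ is bijective on $H^i(X,W\Omega^j_{X/R})$, which is the unit-root/multiplicative condition.

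For parts (ii) and (iii) I would set $P:=H^i(\mathfrak X,\Omega^j_{\mathfrak X/W(R)})$, free over $W(R)$ by (A1), and take $\varsigma:=\alpha_j$ from \eqref{alpha r}. By \eqref{restriction} we have $\varsigma(I_RP)\subseteq VH^i(X,W\Omega^j_{X/R})$, and since $P/I_RP\cong H^i(X,\Omega^j_{X/R})$ by base change while $H^i(X,W\Omega^j_{X/R})/VH^i(X,W\Omega^j_{X/R})\cong H^i(X,\Omega^j_{X/R})$ by (i), the reduction $\overline\varsigma$ is an endomorphism of $H^i(X,\Omega^j_{X/R})$; the commutative diagrams \eqref{comm square} and \eqref{comm square 2}, combined with the degeneration in the Lemma, identify it with the base-changed de Rham comparison and hence show it is an isomorphism. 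The multiplicative display structure $(P,Q=I_RP,F,F_1)$ is the one transported from the Frobenius on $H^i(X,W\Omega^j_{X/R})$ through $\varsigma$, and bijectivity of $F_1$ is precisely the bijectivity of $F$ established in (i). For (iii) I would run a d\'evissage comparing the $I_R$-adic filtration on $P$ with the $V$-adic filtration on the target: the commutative square of the Remark shows $\varsigma$ is filtered, its degree-zero graded piece is $\overline\varsigma$ and hence an isomorphism, and injectivity of $V$ (part (i)) together with bijectivity of $F_1$ forces all higher graded pieces to be isomorphisms as well; since both modules are $V$-adically complete and separated (via the $\varprojlim_n$ presentations of $W(R)$ and of $H^i(X,W_n\Omega^j_{X/R})$), $\varsigma$ is an isomorphism.

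For the abelian case with $j=0$ I would instead identify $P$ with the Dieudonn\'e module (display) of the connected part of the $p$-divisible group of $\mathrm{Pic}$, via Berthelot--Breen--Messing \cite{BBM82}; ordinarity makes this part of multiplicative type, so $F_1$ is manifestly bijective. For surfaces this handles $j=0$ directly, $j=2$ by duality, and $j=1$ (the \'etale/unit-root part) by the middle-degree argument above. The hard part will be part (i) in the middle degree $0<j<d$: controlling the de Rham--Witt filtration precisely enough that ordinarity yields both $V$-injectivity and the clean cokernel, and --- more delicately --- propagating this from $X_k$ to the artinian base $R$, since base change for the cohomology of Witt sheaves is subtle. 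It is here that the freeness hypothesis (A1) and the filtered comparison of Theorem \ref{filtered comparison} are indispensable.
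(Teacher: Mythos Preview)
Your d\'evissage for (iii), comparing the $I_R$-adic filtration on $P$ with the $V$-adic filtration on the target, is exactly the paper's Lemma~\ref{(iii)} and is correct. The gaps are in (i) and (ii).

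For (i), the appeal to $H^a(X_k,B\Omega^b_{X_k/k})=0$ does not propagate to $R$: there is no base-change theorem for Witt-sheaf cohomology that lets you ``carry over'' degeneration of the slope spectral sequence from $k$ to an artinian thickening, and the cokernel of $V$ on $W\Omega^j_{X/R}$ for $j>0$ is not governed by $B\Omega^\bullet_{X_k/k}$. The paper never argues this way. Instead it compares, via Theorem~\ref{filtered comparison}, the map $(V,\mathrm{id}):\mathbb{H}^i(N^{j+1}W\Omega_{X/R}^{\geq j})\to\mathbb{H}^i(W\Omega_{X/R}^{\geq j})$ with its de Rham analogue $\mathbb{H}^i(\mathcal{F}^{j+1}\Omega_{\mathfrak X/W(R)}^{\geq j})\to\mathbb{H}^i(\Omega_{\mathfrak X/W(R)}^{\geq j})$, which is injective with cokernel $H^i(X,\Omega^j_{X/R})$ by (A1)--(A2); a diagram chase sandwiching this between identity maps on $\mathbb{H}^\ast(W\Omega^{\geq j+1})$ then gives injectivity of $V$ on $H^i(X,W\Omega^j_{X/R})$ and identifies its cokernel. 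This is the content of \eqref{comm square 2} and the inductive ladder used repeatedly in the surface and $n$-fold proofs. You invoke Theorem~\ref{filtered comparison} in your final sentence but never use it; your direct $B\Omega$ route does not close over $R$.

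For (ii), your construction is circular. You set $P=H^i(\mathfrak X,\Omega^j_{\mathfrak X/W(R)})$ and propose to equip it with a Frobenius ``transported from $H^i(X,W\Omega^j_{X/R})$ through $\varsigma$'', but the d\'evissage for (iii) requires $P$ to already be a multiplicative display with a bijective $F_1$ \emph{before} $\varsigma$ is known to be an isomorphism. The paper supplies this structure from an external source. For surfaces and for $j=0$ on abelian schemes, it comes from the Dieudonn\'e modules of the formal/$p$-divisible groups $\widehat{\mathrm{Pic}}$, $\widehat{\mathrm{Br}}$, the \'etale part of the extended Brauer group $\Phi_{X/R}$, and $\mathrm{Alb}$; ordinarity of $X_k$ makes the relevant pieces multiplicative or \'etale, and rigidity of $\hat{\mathbb G}_m$ passes this from $k$ to $R$. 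In the general $n$-fold case the source is Deligne's theory of ordinary Hodge $F$-crystals: the conjugate filtration $U_\bullet$ on $H^s_{\mathrm{cris}}$ of the versal family has unit-$F$-crystal graded pieces, and evaluating on $W(R)$ produces the multiplicative displays $P_i={U_i}_{W(R)}/{U_{i-1}}_{W(R)}$. Only \emph{after} $\varsigma$ is proved to be an isomorphism does one observe (a posteriori) that $P\cong H^i(\mathfrak X,\Omega^j_{\mathfrak X/W(R)})$ and that $\varsigma$ agrees with $\alpha_j$.
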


\begin{lemma}\label{(iii)}
Assume that properties (i) and (ii) in Proposition \ref{HW} hold for a fixed pair $(i,j)$. Then property (iii) holds.
\end{lemma}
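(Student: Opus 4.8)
The plan is to derive (iii) from (i) and (ii) by a snake-lemma argument comparing the multiplicative display $\mathcal{P}$ with the cohomology, followed by a topological-nilpotence/completeness argument that kills the resulting kernel and cokernel. Write $M:=H^{i}(X,W\Omega_{X/R}^{j})$ and $\overline{M}:=H^{i}(X,\Omega_{X/R}^{j})$. First I would record what the hypotheses give. By (i) the Verschiebung $V$ is injective on $M$ with cokernel $\overline{M}$. Since $\mathcal{P}=(P,Q=I_{R}P,F,F_{1})$ is a \emph{multiplicative} display, $F_{1}\colon I_{R}P\to P$ is a Frobenius-linear bijection, so $F_{1}^{-1}\colon P\xrightarrow{\sim}I_{R}P\hookrightarrow P$ is injective with cokernel $P/I_{R}P$. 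The Frobenius-equivariant square in the remark following Proposition \ref{HW} reads $V\circ\varsigma\circ F_{1}=\varsigma$ on $Q$; substituting $q=F_{1}^{-1}(y)$ this becomes
\[
\varsigma\circ F_{1}^{-1}=V\circ\varsigma\colon P\to M\,.
\]
Combined with $\varsigma(I_{R}P)\subset VM$ and the fact from (ii) that the induced map $\overline{\varsigma}\colon P/I_{R}P\to M/VM\cong\overline{M}$ is an isomorphism, this produces a morphism of short exact sequences
\[
(0\to P\xrightarrow{F_{1}^{-1}}P\to P/I_{R}P\to 0)\longrightarrow(0\to M\xrightarrow{V}M\to\overline{M}\to 0)
\]
with vertical maps $\varsigma$, $\varsigma$, $\overline{\varsigma}$, the right-hand one an isomorphism.

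Applying the snake lemma and using $\ker\overline{\varsigma}=\operatorname{coker}\overline{\varsigma}=0$, I obtain that $F_{1}^{-1}$ restricts to an automorphism of $K:=\ker\varsigma$ and that $V$ induces an automorphism of $C:=\operatorname{coker}\varsigma=M/\varsigma(P)$ (both descents being legitimate since $\varsigma F_{1}^{-1}=V\varsigma$). For injectivity I would exploit that $F_{1}^{-1}$ \emph{raises} the filtration $\operatorname{Fil}^{n}P:=(F_{1}^{-1})^{n}(P)$: from $K=F_{1}^{-1}(K)$ and $F_{1}^{-1}(\operatorname{Fil}^{n}P)=\operatorname{Fil}^{n+1}P$ one gets $K\subset\operatorname{Fil}^{n}P$ for all $n$, hence $K\subset\bigcap_{n}\operatorname{Fil}^{n}P=0$ once this filtration is known to be separated. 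For surjectivity, $V$ being onto on $C$ gives $M=\varsigma(P)+VM$, and iterating (using $V\varsigma(P)=\varsigma(F_{1}^{-1}P)\subset\varsigma(P)$) yields $M=\varsigma(P)+V^{n}M$ for every $n$. Writing $m=\sum_{k<n}V^{k}\varsigma(p_{k})+V^{n}m_{n}$ and using $V^{k}\varsigma=\varsigma(F_{1}^{-1})^{k}$, the partial sums equal $\varsigma(q_{n})$ with $q_{n}=\sum_{k<n}(F_{1}^{-1})^{k}p_{k}$ a Cauchy sequence for $\operatorname{Fil}^{\bullet}P$; completeness of $P$ gives a limit $q_{\infty}$, and continuity of $\varsigma$ together with $\bigcap_{n}V^{n}M=0$ forces $\varsigma(q_{\infty})=m$, so $\varsigma$ is surjective.

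The main obstacle is exactly the completeness and separatedness of the two filtrations feeding the successive-approximation step. On the display side this is harmless: $\operatorname{Fil}^{\bullet}P$ is the multiplicative-display analogue of the truncation ($V$-adic) filtration on the unit display $W(R)=\varprojlim_{n}W_{n}(R)$, for which $\bigcap_{n}V^{n}W(R)=0$ and $W(R)=\varprojlim_{n}W(R)/V^{n}W(R)$; since $P$ is a finite $W(R)$-module (indeed $P\cong H^{i}(\mathfrak{X},\Omega_{\mathfrak{X}/W(R)}^{j})$ in our cases) and each $\operatorname{Fil}^{n}P/\operatorname{Fil}^{n+1}P\cong P/I_{R}P$ is an $R$-module, $P$ is separated and complete for $\operatorname{Fil}^{\bullet}$. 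The substantive point is the $V$-adic separatedness and completeness of $M=H^{i}(X,W\Omega_{X/R}^{j})=\varprojlim_{n}H^{i}(X,W_{n}\Omega_{X/R}^{j})$. Here I would argue that property (i), asserting injectivity of $V$ with the correct cokernel in every degree, is precisely what makes the $V$-adic filtration on $M$ behave like the truncation filtration of the de Rham--Witt tower: the relation $\tensor*[^V]{a}{}\,m=V(a\,Fm)$ gives $I_{R}M\subset VM$, so the inverse-limit topology refines the $V$-adic one, and the pro-structure yields $\bigcap_{n}V^{n}M=0$ and $M=\varprojlim_{n}M/V^{n}M$.

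Granting these completeness properties, the snake-lemma output forces $K=0$ and $C=0$, so $\varsigma$ is simultaneously injective and surjective, i.e.\ an isomorphism, which is assertion (iii). I expect the only genuinely delicate verification to be the $V$-adic completeness of $M$; everything else is formal once the commutative ladder is in place.
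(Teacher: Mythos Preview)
Your argument is correct and runs parallel to the paper's, both hinging on the $V$-adic separatedness of $M=H^{i}(X,W\Omega_{X/R}^{j})$ (which the paper invokes as \cite[Lemma~39]{LZ19} rather than deriving from~(i)). The surjectivity halves are essentially identical: each builds a Cauchy sequence in $P$ via iterated application of $F_{1}^{-1}$ and passes to the limit using completeness of $P$ together with separatedness of $M$.

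The injectivity halves genuinely differ. You kill $K=\ker\varsigma$ directly: from the snake lemma $F_{1}^{-1}|_{K}$ is bijective, so $K=(F_{1}^{-1})^{n}(K)\subset(F_{1}^{-1})^{n}(P)=I_{n}P$ for all $n$, whence $K\subset\bigcap_{n}I_{n}P=0$. The paper instead first proves $\varsigma$ surjective, then shows that each graded piece $I_{n}P/I_{n+1}P$ and $V^{n}M/V^{n+1}M$ is, via the iterated bijections $F_{1}^{n}$ and $V^{n}$, isomorphic to $(P/I_{R}P)_{[F^{n}]}\cong(M/VM)_{[F^{n}]}$ as a noetherian $W_{n+1}(R)$-module; a surjection between isomorphic noetherian modules is an isomorphism, so the graded maps are bijective and $\varsigma$ is injective. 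Your route avoids this noetherian detour and is cleaner; the paper's route makes the structure of the graded pieces more explicit.

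One caution: your sketch for separatedness of $M$ via $I_{R}M\subset VM$ is not quite the right mechanism (and the direction ``inverse-limit topology refines the $V$-adic one'' is backwards for what you want). The direct argument is that $V^{n}M$ lies in the kernel of the projection $M\to H^{i}(X,W_{n}\Omega_{X/R}^{j})$, because $V^{n}$ shifts the de Rham--Witt level by $n$ and $W_{0}\Omega^{j}=0$; since $M$ is by definition the inverse limit over these levels, the intersection of those kernels is zero, hence $\bigcap_{n}V^{n}M=0$.
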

\begin{proof}
The proof is identical to the corresponding section in the proof of \cite[Lemma 47]{LZ19}. For completeness we include the argument. The property $\varsigma(I_{R}P)\subset VH^{i}(X,W\Omega_{X/R}^{j})$ implies $\varsigma(I_{n}P)\subset V^{n}H^{i}(X,W\Omega_{X/R}^{j})$ for $I_{n}:=\tensor*[^{V^{n}}]{W(R)}{}$. Then one has a commutative diagram 
\begin{equation*}
\begin{tikzpicture}[descr/.style={fill=white,inner sep=1.5pt}]
        \matrix (m) [
            matrix of math nodes,
            row sep=4em,
            column sep=2em,
            text height=1.5ex, text depth=0.25ex
        ]
        { I_{n-1}P & V^{n-1}H^{i}(X,W\Omega_{X/R}^{j}) \\
        I_{n}P & V^{n}H^{i}(X,W\Omega_{X/R}^{j}) \\};

        \path[overlay,->, font=\scriptsize] 
        (m-1-1) edge node [above]{$\varsigma$} (m-1-2)
        (m-2-1) edge node [above]{$\varsigma$}        (m-2-2)
        (m-2-1) edge node [left]{$F_{1}$} (m-1-1)
        (m-1-2) edge node [right]{$V$} (m-2-2)
         ;
                        
\end{tikzpicture}
\end{equation*}
where $F_{1}$ is bijective again.

We claim that the maps
\begin{equation}\label{map}
\overline{\varsigma}\,:\,\frac{I_{n}P}{I_{n+1}P}\rightarrow\frac{V^{n}H^{i}(X,W\Omega_{X/R}^{j})}{V^{n+1}H^{i}(X,W\Omega_{X/R}^{j})}
\end{equation}
are surjective. For $n=0$ this is property $(ii)$. Let $m\in H^{i}(X,W\Omega_{X/R}^{j})$. Find, by induction, elements $x\in I_{n-1}P$ and $m_{1}\in H^{i}(X,W\Omega_{X/R}^{j})$ such that $V^{n-1}m=\varsigma(x)+V^{n}m_{1}$. We write $x=F_{1}y$ for $y\in I_{n}P$. Then 
\begin{equation*}
V^{n}m=V\varsigma(F_{1}y)+V^{n+1}m_{1}=\varsigma(y)+V^{n+1}m_{1}
\end{equation*}
which ends the induction and proves the claim.

We know that $H^{i}(X,W\Omega_{X/R}^{j})$ is $V$-adically separated \cite[Lemma 39]{LZ19}. Therefore the surjectivity of \ref{map} implies that $P\xrightarrow{\varsigma}H^{i}(X,W\Omega_{X/R}^{j})$ is surjective and $H^{i}(X,W\Omega_{X/R}^{j})$ is $V$-adically complete. Since $V$ is injective by property $(i)$, $H^{i}(X,W\Omega_{X/R}^{j})$ is a reduced Cartier module.

Now consider \eqref{map} as a homomorphism of $W_{n+1}(R)$-modules. We claim that both sides of \eqref{map} are isomorphic as $W_{n+1}(R)$-modules and are noetherian. Since a surjective endomorphism of noetherian modules is an isomorphism, this implies that \eqref{map} is an isomorphism and therefore $\varsigma$ is an isomorphism as well. It therefore suffices to prove the claim:

Since 
\begin{align*}
F_{1}\,:\, 
& I_{n}P\rightarrow I_{n-1}P \\
& \tensor[^{V^{n}}]{\xi}{}x\mapsto \tensor[^{V^{n-1}}]{\xi}{}Fx
\end{align*}
is a bijection, we get a bijection
\begin{equation*}
F_{1}\,:\, I_{n}P/I_{n+1}P\rightarrow I_{n-1}P/I_{n}P\,.
\end{equation*}
Let ${P/I_{1}P}_{[F^{n}]}$ denote the $W_{n+1}(R)$-module given by restriction of scalars along $F^{n}:W_{n+1}(R)\rightarrow R$. Iterating $F_{1}$, we get an isomorphism of $W_{n+1}(R)$-modules
\begin{equation*}
F_{1}^{n}\,:\, I_{n}P/I_{n+1}P\rightarrow{P/I_{1}P}_{[F^{n}]}\,.
\end{equation*}
Since $R$ is $F$-finite, ${P/I_{1}P}_{[F^{n}]}$ is a noetherian $W_{n+1}(R)$-module.

Now for the reduced Cartier module $M=H^{i}(X,W\Omega_{X/R}^{j})$ we obtain, analogously, the isomorphism
\begin{equation*}
V^{n}:{M/VM}_{[F^{n}]}\xrightarrow{\simeq}V^{n}M/V^{n+1}M\,.
\end{equation*} 
Hence properties $(i)$ and $(ii)$ show the claim.
\end{proof}

\begin{lemma}
Proposition \ref{HW} holds for $j=0$, $i\geq 0$ if $X$ is an abelian scheme or a smooth proper surface with geometrically connected fibres, ordinary closed fibre and $k$ is algebraically closed.
\end{lemma}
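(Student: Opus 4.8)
The plan is to invoke Lemma \ref{(iii)}, which reduces the statement to proving parts (i) and (ii) of Proposition \ref{HW}; since $j=0$ the relevant object is the Witt structure sheaf $W\Omega_{X/R}^{0}=W\mathcal{O}_{X}$, and the whole argument takes place in degree $0$ of the de Rham--Witt complex, where $F$ and $V$ are honest operators on $W\mathcal{O}_{X}$ with $FV=VF=p$.

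For part (i) I would begin from the fundamental short exact sequence of sheaves
\begin{equation*}
0\rightarrow W_{n-1}\mathcal{O}_{X}\xrightarrow{V}W_{n}\mathcal{O}_{X}\rightarrow\mathcal{O}_{X}\rightarrow 0\,,
\end{equation*}
the cokernel of $V$ being identified with $\mathcal{O}_{X}$ via the $0$-th Witt component, and then take the associated long exact cohomology sequences. The goal is to show these split into short exact sequences compatibly in $n$, so that the projective limit computes $H^{i}(X,W\mathcal{O}_{X})=\varprojlim_{n}H^{i}(X,W_{n}\mathcal{O}_{X})$. The crucial input is ordinarity of $X_{k}$: by \cite[Th\'{e}or\`{e}me 4.13]{IR83} an ordinary variety admits a slope decomposition, and its slope-$0$ part $H^{i}(X_{k},W\mathcal{O}_{X_{k}})$ is a finite free $W(k)$-module on which $F$ is a $\sigma$-linear automorphism; hence $V=pF^{-1}$ is injective with cokernel $H^{i}(X_{k},\mathcal{O}_{X_{k}})$. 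Combining this with the freeness hypothesis (A1) and cohomology-and-base-change transports freeness and bijectivity of $F$ from the closed fibre to the family over $W_{n}(R)$, the surjectivity of the restriction maps kills the $\varprojlim^{1}$-terms by Mittag--Leffler, and one obtains
\begin{equation*}
0\rightarrow H^{i}(X,W\mathcal{O}_{X})\xrightarrow{V}H^{i}(X,W\mathcal{O}_{X})\rightarrow H^{i}(X,\mathcal{O}_{X})\rightarrow 0\,.
\end{equation*}

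For part (ii) the multiplicative display will be supplied by the slope-$0$ part of the relevant $p$-divisible formal group, which is of multiplicative type exactly because $X_{k}$ is ordinary. For an abelian scheme $A/R$ the connected part $\mathcal{G}^{0}\subset A[p^{\infty}]$ is a sub-$p$-divisible group which, by ordinarity of the fibre and rigidity, is of multiplicative (toric) type over $R$; its Zink display is multiplicative \cite{Zin02,BBM82}, with underlying module $P_{1}=H^{1}(A,W\mathcal{O}_{A})$. For general $i$ I set $P=\bigwedge^{i}P_{1}$, which is again multiplicative, and let $\varsigma$ be the canonical isomorphism $\bigwedge^{i}H^{1}(A,W\mathcal{O}_{A})\xrightarrow{\sim}H^{i}(A,W\mathcal{O}_{A})$. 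For a surface the modules $H^{1}(X,W\mathcal{O}_{X})$ and $H^{2}(X,W\mathcal{O}_{X})$ are the Dieudonn\'{e}--Cartier modules of the formal Picard group and the formal Brauer group, both of multiplicative type over $R$ by ordinarity, so they carry multiplicative display structures exactly as in \cite[\S5]{LZ19}; the case $i=0$ is the unit display $W(R)$, using that the fibres are geometrically connected. In each case $\varsigma$ is equivariant for the Witt-vector Frobenius, and the relation $V\circ\varsigma\circ F_{1}=\varsigma$ recorded in the Remark after the proposition shows $\varsigma(I_{R}P)\subset VH^{i}(X,W\mathcal{O}_{X})$.

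It then remains to verify that $\overline{\varsigma}$ is an isomorphism of free $R$-modules: here $P/I_{R}P\cong H^{i}(X,\mathcal{O}_{X})$ (reduction modulo $I_{R}={}^{V}W(R)$, using (A1)), while $H^{i}(X,W\mathcal{O}_{X})/\mathrm{im}\,V\cong H^{i}(X,\mathcal{O}_{X})$ by part (i), and tracing the identifications shows $\overline{\varsigma}$ is the identity on $H^{i}(X,\mathcal{O}_{X})$. With (i) and (ii) established, Lemma \ref{(iii)} yields (iii) and completes the proof. The main obstacle is the deformation-theoretic heart of (ii): showing that the multiplicative, slope-$0$ structure of the closed fibre extends over the artinian base $R$ with $F_{1}$ bijective, and that $H^{i}(X,W\mathcal{O}_{X})$ is genuinely the Dieudonn\'{e} module of a $p$-divisible formal group over $R$. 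For abelian schemes this relies on Zink/BBM display theory and for surfaces on the theory of the formal Brauer group over $R$ from \cite{LZ19}; establishing the corresponding bijectivity of $F$ over $R$ rather than only over $k$ in part (i) is the same difficulty viewed from the de Rham--Witt side.
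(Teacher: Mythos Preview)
Your overall strategy is right and matches the paper's: prove (i) and (ii) and then invoke Lemma~\ref{(iii)}. The identification of the relevant formal groups (formal Picard, formal Brauer) and the use of ordinarity plus rigidity of $\hat{\mathbb{G}}_{m}$ to lift multiplicativity from $k$ to $R$ is also the paper's line. But there is a genuine gap in your argument for part (i), and a circularity in part (ii) for abelian schemes.

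For (i), your step ``(A1) and cohomology-and-base-change transports freeness and bijectivity of $F$ from the closed fibre to the family over $W_{n}(R)$'' is not an argument. Hypothesis (A1) concerns $H^{j}(X_{n},\Omega^{i}_{X_{n}/W_{n}(R)})$, not $H^{i}(X,W_{n}\mathcal{O}_{X})$; there is no base-change theorem that lets you deduce bijectivity of $F$ on $H^{i}(X,W\mathcal{O}_{X})$ directly from the closed fibre. What the paper actually does is to use the \emph{lifting} $\mathfrak{X}$ (or $\mathcal{A}$) over $W(R)$: since de Rham and Hodge cohomology commute with base change, the map $H^{i}(\mathfrak{X},\mathcal{O}_{\mathfrak{X}})\rightarrow H^{i}(X,\mathcal{O}_{X})$ is surjective, and it factors through $H^{i}(X,W\mathcal{O}_{X})\rightarrow H^{i}(X,\mathcal{O}_{X})$ via the comparison $H^{i}_{\mathrm{dR}}(\mathfrak{X}/W(R))\cong H^{i}_{\mathrm{cris}}(X/W(R))$. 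This forces the long exact sequence attached to $0\rightarrow W\mathcal{O}_{X}\xrightarrow{V}W\mathcal{O}_{X}\rightarrow\mathcal{O}_{X}\rightarrow 0$ to break into short exact sequences. You have not supplied this bridge, and without it the splitting for $i\geq 2$ is unproved.

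For (ii) in the abelian case, you write ``let $\varsigma$ be the canonical isomorphism $\bigwedge^{i}H^{1}(A,W\mathcal{O}_{A})\xrightarrow{\sim}H^{i}(A,W\mathcal{O}_{A})$''. That this cup-product map is an isomorphism is exactly the content of (iii), so assuming it is circular. You only need $\varsigma$ to be the cup-product \emph{map}; the paper then checks that $\overline{\varsigma}$ becomes the known isomorphism $\bigwedge^{i}H^{1}(A,\mathcal{O}_{A})\xrightarrow{\sim}H^{i}(A,\mathcal{O}_{A})$ in Hodge cohomology (this is the exterior-algebra structure of de Rham cohomology of an abelian scheme), and Lemma~\ref{(iii)} does the rest. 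Your claim that ``$P/I_{R}P\cong H^{i}(X,\mathcal{O}_{X})$'' and ``$\overline{\varsigma}$ is the identity'' glosses over precisely this point: $P/I_{R}P$ is $\bigwedge^{i}H^{1}(A,\mathcal{O}_{A})$, not $H^{i}(A,\mathcal{O}_{A})$, and $\overline{\varsigma}$ is the cup product, not the identity.
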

\begin{proof}
For $i=j=0$, the exact sequence in question reads 
\begin{equation*}
0\rightarrow W(R)\xrightarrow{V}W(R)\rightarrow R\rightarrow 0
\end{equation*}
and there is nothing to prove. $H^{0}(X,W\mathcal{O}_{X})$ is isomorphic to the multiplicative (unit) display.

Now let $X=A$ be an abelian scheme. Then $H^{1}(A,W\mathcal{O}_{A})$ is the reduced Cartier module of the formal $p$-divisible group $\widehat{\mathrm{Pic}}_{A/R}$ with tangent space $H^{1}(A,\mathcal{O}_{A})$, and we have an exact sequence 
\begin{equation*}
0\rightarrow H^{1}(A,W\mathcal{O}_{A})\xrightarrow{V}H^{1}(A,W\mathcal{O}_{A})\rightarrow H^{1}(A,\mathcal{O}_{A})\rightarrow 0\,.
\end{equation*}
Since the closed fibre is ordinary the associated display is multiplicative, so 
\begin{equation*}
(P,Q,F,F_{1})=(H^{1}(A,W\mathcal{O}_{A}), I_{R}H^{1}(A,W\mathcal{O}_{A}), F, F_{1})\,.
\end{equation*}
The Grothendieck-Messing crystal $\mathbb{D}(\widehat{\mathrm{Pic}}_{A/R})$ evaluated at $W(R)\rightarrow R$ yields
\begin{equation*}
\mathbb{D}(\widehat{\mathrm{Pic}}_{A/R})_{W(R)}=\mathbb{D}(\hat{\mathbb{G}}_{m/R}^{d})_{W(R)}=H^{1}(A,W\mathcal{O}_{A})=H^{1}(\mathcal{A},\mathcal{O}_{\mathcal{A}})\,,
\end{equation*}
where $\mathcal{A}$ is a lifting as ind-scheme over $\mathrm{Spec}W_{\bigcdot}(R)$.

For $i\geq 2$ we have a commutative diagram
\\
\begin{equation*}
\begin{tikzpicture}[descr/.style={fill=white,inner sep=1.5pt}]
        \matrix (m) [
            matrix of math nodes,
            row sep=4em,
            column sep=2em,
            text height=1.5ex, text depth=0.25ex
        ]
        { \displaystyle\bigwedge\limits^{i}H^{1}(A,W\mathcal{O}_{A}) & H^{i}(A,W\mathcal{O}_{A}) & H^{i}(A,\mathcal{O}_{A}) \\
       \displaystyle\bigwedge\limits^{i}H^{1}(\mathcal{A},\mathcal{O}_{\mathcal{A}}) & H^{i}(\mathcal{A},\mathcal{O}_{\mathcal{A}}) & \ \\};

        \path[overlay,->, font=\scriptsize] 
        (m-1-1) edge (m-1-2)
        (m-2-1) edge node [above]{$\simeq$} (m-2-2)
        (m-2-1) edge node [left]{$\simeq$} (m-1-1)
        (m-1-2) edge (m-1-3)
        ;
        
        \path[overlay,->>, font=\scriptsize]
        (m-2-2) edge (m-1-3)
        ;
                        
\end{tikzpicture}
\end{equation*}
The right arrow $H^{i}(\mathcal{A},\mathcal{O}_{\mathcal{A}})\rightarrow H^{i}(A,\mathcal{O}_{A})$ is a surjection because de Rham cohomology and Hodge cohomology commute with base change \cite[Corollary 8.3]{Kat70} and we have a commutative diagram
\begin{equation*}
\begin{tikzpicture}[descr/.style={fill=white,inner sep=1.5pt}]
        \matrix (m) [
            matrix of math nodes,
            row sep=4em,
            column sep=2em,
            text height=1.5ex, text depth=0.25ex
        ]
        { H^{i}_{\mathrm{dR}}(\mathcal{A}/W(R)) & H^{i}(\mathcal{A},\mathcal{O}_{\mathcal{A}}) \\
      H^{i}_{\mathrm{dR}}(A/R)) & H^{i}(A,\mathcal{O}_{A}) \\};
        
        \path[overlay,->,font=\scriptsize]
        (m-1-2) edge (m-2-2);
        
        \path[overlay,->>, font=\scriptsize]
        (m-1-1) edge (m-1-2)
        (m-1-1) edge (m-2-1)
        (m-2-1) edge (m-2-2)
        ;
                        
\end{tikzpicture}\,.
\end{equation*}
Hence the exact sequence
\begin{equation*}
0\rightarrow W\mathcal{O}_{A}\xrightarrow{V}W\mathcal{O}_{A}\rightarrow\mathcal{O}_{A}\rightarrow 0
\end{equation*}
induces exact sequences for all $i$
\begin{equation*}
0\rightarrow H^{i}(A,W\mathcal{O}_{A})\xrightarrow{V}H^{i}(A,W\mathcal{O}_{A})\rightarrow H^{i}(A,\mathcal{O}_{A})\rightarrow 0\,.
\end{equation*}
Now, $P=\bigwedge^{i}H^{1}(A,W\mathcal{O}_{A})$ carries the exterior power structure of the multiplicative display $H^{1}(A,W\mathcal{O}_{A})$ and hence is again multiplicative:
\begin{equation*}
\mathcal{P}=(P,I_{R}P,F,F_{1})
\end{equation*}
where $F$ is defined by $\bigwedge^{i}F$ on $P$. The above diagram shows that the $F$-equivariant map $\varsigma\,:\,P\rightarrow H^{i}(A,W\mathcal{O}_{A})$ induces an isomorphism 
\begin{equation*}
\overline{\varsigma}\,:\,P/I_{R}P\xrightarrow{\simeq}H^{i}(A,W\mathcal{O}_{A})/\mathrm{im} V\cong H^{i}(A,\mathcal{O}_{A})\,.
\end{equation*}
We conclude property $(iii)$ by Lemma \ref{(iii)}. Hence the lemma holds for abelian schemes.

Now let $X$ be smooth projective scheme over $\mathrm{Spec}\,R$ of dimension $2$ satisfying assumptions (A1), (A2) with ordinary closed fibre. Recall that 
\begin{align*}
H^{0}(X_{k},B\Omega^{1}_{X_{k}/k})
& \cong\ker\left(C:H^{0}\left(X_{k},\left(\Omega^{1}_{X_{k}/k}\right)_{d=0}\right)\rightarrow H^{0}(X_{k},\Omega^{1}_{X_{k}/k})\right) \\
& \cong H^{1}((X_{k})_{\mathrm{fppf}},\alpha_{p}) \\
& \cong \mathrm{Hom}_{k-\mathrm{grp}}(\alpha_{p},\mathrm{Pic}_{X_{k}})
\end{align*}
by applying \cite[(2.1.11)]{Ill79}, \cite[Proposition 4.14]{Mil80} and \cite[Proposition 4.16]{Mil80} in turn (recall that the Cartier dual of $\alpha_{p}$ is itself). Since $X_{k}$ is ordinary, we conclude that the above groups are trivial. In particular, $(\mathrm{Pic}^{0}_{X_{k}})_{\mathrm{red}}$ is ordinary.  The formal Picard group $\widehat{Pic}_{X_{k}/k}$ of $X_{k}$ is the formal completion of $(\mathrm{Pic}^{0}_{X_{k}})_{\mathrm{red}}$ along the zero section \cite[Remark (1.9)(ii)]{AM77}, so we conclude that $\widehat{Pic}_{X_{k}/k}$ is multiplicative. Since $k$ is algebraically closed, we therefore have $\widehat{Pic}_{X_{k}/k}\simeq\hat{\mathbb{G}}_{m}^{g}$. By rigidity of $\hat{\mathbb{G}}_{m}$ we see that $\widehat{Pic}_{X/R}\simeq\hat{\mathbb{G}}_{m/R}^{g}$ and the case $i=1$ is covered by the same argument as in the above discussion.

By \cite[Corollary 3.3]{AM77} the Cartier module of the formal Brauer group $\widehat{\mathrm{Br}}_{X_{k}/k}$ is $H^{2}(X_{k},W\mathcal{O}_{X_{k}})$. Under the assumptions (A1) and (A2), the crystalline cohomology of $X_{k}$ is torsion-free. Since $X_{k}$ is ordinary, the Newton and Hodge polygons of $X_{k}$ coincide \cite[Proposition 7.3]{BK86}. Since $H^{2}(X_{k},W\mathcal{O}_{X_{k}})$ is the slope $0$ part of the crystalline cohomology, its rank (i.e. the height of $\widehat{\mathrm{Br}}_{X_{k}/k}$) equals the dimension of $H^{2}(X,\mathcal{O}_{X})$, and hence $\widehat{\mathrm{Br}}_{X_{k}/k}$ is multiplicative. Since $k$ is algebraically closed, we see that $\widehat{\mathrm{Br}}_{X_{k}/k}\simeq\hat{\mathbb{G}}_{m}^{h}$, and therefore $\widehat{\mathrm{Br}}_{X/R}\simeq\hat{\mathbb{G}}_{m/R}^{h}$ by rigidity. Then it is clear that 
\begin{equation*}
0\rightarrow H^{2}(X,W\mathcal{O}_{X})\xrightarrow{V}H^{2}(X,W\mathcal{O}_{X})\rightarrow H^{2}(X,\mathcal{O}_{X})\rightarrow 0
\end{equation*}
is exact and 
\begin{equation*}
\mathbb{D}(\widehat{\mathrm{Br}}_{X/R})_{W(R)}=\mathbb{D}(\hat{\mathbb{G}}_{m/R}^{h})_{W(R)}=H^{2}(\mathfrak{X},\mathcal{O}_{\mathfrak{X}})=\mathrm{Lie}\,\widehat{\mathrm{Br}}_{\mathfrak{X}/W(R)}=H^{2}(X,W\mathcal{O}_{X})\,.
\end{equation*}
Since $H^{i}(X,W\mathcal{O}_{X})=0$ for $i>2$, the lemma is proved in the case of surfaces as well. 
\end{proof}

In the following we will prove Proposition \ref{HW} for $i=0,j=1$ for surfaces and abelian schemes.

Using the quasi-isomorphism $N^{2}W\Omega_{X/R}^{\bullet}\cong\mathcal{F}^{2}\Omega_{\mathfrak{X}/W(R)}^{\bullet}$ and the fact that the $E_{1}$-hypercohomology spectral sequence associated to $\mathcal{F}^{2}\Omega_{\mathfrak{X}/W(R)}^{\bullet}$ degenerates, we compute the cohomology of the Nygaard complex (compare \cite[Remark 42]{LZ19}):
\\
\begin{itemize}
\item [--] $\mathbb{H}^{0}(N^{2}W\Omega_{X/R}^{\bullet})\cong I_{R}H^{0}(\mathfrak{X},\mathcal{O}_{\mathfrak{X}})=I_{R}W(R)$, \\
\item [--] $\mathbb{H}^{1}(N^{2}W\Omega_{X/R}^{\bullet})\cong I_{R}H^{1}(\mathfrak{X},\mathcal{O}_{\mathfrak{X}})\oplus I_{R}H^{0}(\mathfrak{X},\Omega_{\mathfrak{X}/W(R)}^{1})$, \\
\item [--] $\mathbb{H}^{2}(N^{2}W\Omega_{X/R}^{\bullet})\cong I_{R}H^{2}(\mathfrak{X},\mathcal{O}_{\mathfrak{X}})\oplus I_{R}H^{1}(\mathfrak{X},\Omega_{\mathfrak{X}/W(R)}^{1})\oplus H^{0}(\mathfrak{X},\Omega_{\mathfrak{X}/W(R)}^{2})$, \\
\item [--] $\mathbb{H}^{3}(N^{2}W\Omega_{X/R}^{\bullet})\cong I_{R}H^{3}(\mathfrak{X},\mathcal{O}_{\mathfrak{X}})\oplus I_{R}H^{2}(\mathfrak{X},\Omega_{\mathfrak{X}/W(R)}^{1})\oplus H^{1}(\mathfrak{X},\Omega_{\mathfrak{X}/W(R)}^{2})$ \\ \text{ \ \ \ \ \ } \hspace{21mm} $H^{0}(\mathfrak{X},\Omega_{\mathfrak{X}/W(R)}^{3})$, \\
\item [--] $\mathbb{H}^{4}(N^{2}W\Omega_{X/R}^{\bullet})\cong I_{R}H^{4}(\mathfrak{X},\mathcal{O}_{\mathfrak{X}})\oplus I_{R}H^{3}(\mathfrak{X},\Omega_{\mathfrak{X}/W(R)}^{1})\oplus H^{2}(\mathfrak{X},\Omega_{\mathfrak{X}/W(R)}^{2})$ \\ \text{ \ \ \ \ \ } \hspace{21mm} $\oplus H^{1}(\mathfrak{X},\Omega_{\mathfrak{X}/W(R)}^{3})\oplus H^{0}(\mathfrak{X},\Omega_{\mathfrak{X}/W(R)}^{4})$.
\end{itemize}

Since the map $\partial:H^{0}(X,W\mathcal{O}_{X})\cong W(R)\rightarrow\mathbb{H}^{0}(W\Omega_{X/R}^{1}\xrightarrow{dV}W\Omega_{X/R}^{2})$ is induced by the differential $d$ which vanishes on $W(R)$, $\partial$ is the zero map. Then we have the following commutative diagram
\begin{equation*}
\begin{tikzpicture}[descr/.style={fill=white,inner sep=1.5pt}]
        \matrix (m) [
            matrix of math nodes,
            row sep=4em,
            column sep=2em,
            text height=1.5ex, text depth=0.25ex
        ]
        { \mathbb{H}^{0}(W\Omega^{1}\xrightarrow{d}W\Omega^{2}) & H^{1}_{\rm{cris}}(X/W(R)) & H^{1}(X,W\mathcal{O}_{X}) \\
       \mathbb{H}^{0}(W\Omega^{1}\xrightarrow{dV}W\Omega^{2}) & \mathbb{H}^{1}(N^{2}W\Omega_{X/R}^{\bullet}) & H^{1}(X,W\mathcal{O}_{X})  \\};

        \path[overlay,->, font=\scriptsize] 
        (m-2-1) edge node [right]{$(V,\mathrm{id})$} (m-1-1)
        (m-2-3) edge node [right] {$pV$} (m-1-3)
        (m-2-2) edge (m-1-2)
        ;
        
        \path[overlay, ->>, font=scriptsize]
        (m-1-2) edge (m-1-3)
        (m-2-2) edge (m-2-3)
        ;
        
        \path[overlay, right hook->, font=scriptsize]
        (m-1-1) edge (m-1-2)
        (m-2-1) edge (m-2-2);
                        
\end{tikzpicture}
\end{equation*}
We will see below that $H^{1}(X,W\mathcal{O}_{X})=\mathbb{D}(\widehat{\mathrm{Pic}}_{X/R})$ is a direct summand of $H_{\rm{cris}}^{1}(X/W(R))$, hence the upper right arrow is a surjection. It is easy to see that the composite map
\begin{equation*}
I_{R}H^{1}(\mathfrak{X},\mathcal{O}_{\mathfrak{X}})\rightarrow\mathbb{H}^{1}(\mathcal{F}^{2}\Omega_{\mathfrak{X}/W(R)}^{\bullet})\cong\mathbb{H}^{1}(N^{2}W\Omega_{X/R}^{\bullet})\rightarrow H^{1}(X,W\mathcal{O}_{X})\xrightarrow{V}VH^{1}(X,W\mathcal{O}_{X})
\end{equation*}
can be identified with the isomorphism $I_{R}H^{1}(\mathfrak{X},\mathcal{O}_{\mathfrak{X}})\xrightarrow{\sim}VH^{1}(X,W\mathcal{O}_{X})$ constructed earlier. The lower right arrow in the diagram can then be identified with the surjective map $\mathbb{H}^{1}(\mathcal{F}^{2}\Omega_{\mathfrak{X}/W(R)}^{\bullet})\rightarrow I_{R}H^{1}(\mathfrak{X},\mathcal{O}_{\mathfrak{X}})$.

Since $V$ is injective on $H^{1}(X,W\mathcal{O}_{X})$ and $VH^{1}(X,W\mathcal{O}_{X})\cong I_{R}H^{1}(\mathfrak{X},\mathcal{O}_{\mathfrak{X}})$, the lower right arrow is surjective too. The left vertical arrow can be identified with the map (compare \eqref{comm square 2})
\begin{equation*}
\mathbb{H}^{0}(I_{R}\Omega_{\mathfrak{X}/W(R)}^{1}\xrightarrow{d}\Omega_{\mathfrak{X}/W(R)}^{2})=I_{R}H^{0}(\mathfrak{X},\Omega_{\mathfrak{X}}^{1})\rightarrow H^{0}(\mathfrak{X},\Omega_{\mathfrak{X}}^{1}) 
\end{equation*} 
which is injective and has cokernel $H^{0}(X,\Omega_{X/R}^{1})$. Then the commutative diagram
\\
\begin{equation}
\label{diagram}
\begin{adjustbox}{width=12cm}
\begin{tikzpicture}[descr/.style={fill=white,inner sep=1.5pt}]
        \matrix (m) [
            matrix of math nodes,
            row sep=4em,
            column sep=2em,
            text height=1.5ex, text depth=0.25ex
        ]
        { \mathbb{H}^{0}(W\Omega^{1}\xrightarrow{d}W\Omega^{2}) & H^{0}(X,W\Omega^{1}) & H^{0}(X,W\Omega^{2}) & \mathbb{H}^{1}(W\Omega^{1}\xrightarrow{d}W\Omega^{2}) \\
       \mathbb{H}^{0}(W\Omega^{1}\xrightarrow{dV}W\Omega^{2}) & H^{0}(X,W\Omega^{1}) & H^{0}(X,W\Omega^{2}) & \mathbb{H}^{1}(W\Omega^{1}\xrightarrow{dV}W\Omega^{2}) \\};

        \path[overlay,->, font=\scriptsize] 
        (m-2-1) edge node [right]{$(V,\mathrm{id})$} (m-1-1)
        (m-2-3) edge node [right] {$=$} (m-1-3)
        (m-2-2) edge node[right]{$V$} (m-1-2)
        (m-1-2) edge (m-1-3)
        (m-2-2) edge (m-2-3)
        (m-1-3) edge (m-1-4)
        (m-2-3) edge (m-2-4)
        (m-2-4) edge node [right]{$(V,\mathrm{id})$} (m-1-4)
        ;

        \path[overlay, right hook->, font=scriptsize]
        (m-1-1) edge (m-1-2)
        (m-2-1) edge (m-2-2);
                        
\end{tikzpicture}
\end{adjustbox}
\end{equation}
together with the injectivity of the map $(V,\mathrm{id})$ in \cite[Lemma 44]{LZ19} shows that $V$ is injective on $H^{0}(X,W\Omega_{X/R}^{1})$ and the cokernel is $H^{0}(X,\Omega_{X/R}^{1})$.

Let us now derive the Hodge-Witt decomposition of $H_{\rm{cris}}^{1}(X/W(R))$. Consider the canonical map
\begin{equation*}
H^{1}_{\mathrm{cris}}(\mathrm{Alb}_{X/R}/W(R))\rightarrow H^{1}_{\mathrm{cris}}(X/W(R))
\end{equation*}
induced by the Albanese morphism $X\rightarrow\mathrm{Alb}_{X/R}$. The first crystalline cohomology of an abelian scheme is the Dieudonn\'{e} crystal of the $p$-divisible group of the dual abelian scheme by \cite[Ch. 2]{MM74} (see also \cite[Th\'{e}orem\`{e} 2.5.6]{BBM82}), so we have a direct sum decomposition
\begin{equation}\label{direct sum bbm}
H_{\rm{cris}}^{1}(\mathrm{Alb}_{X/R}/W(R))\simeq\mathbb{D}(\mathrm{Pic}_{X/R}(p))=\mathbb{D}(\widehat{\mathrm{Pic}}_{X/R})\oplus\mathbb{D}(\mathrm{Pic}_{X/R}(p)^{\mathrm{\acute{e}t}})
\end{equation}
into a direct sum of Dieudonn\'{e} modules associated to the connected and \'{e}tale part of the $p$-divisible group associated to $\mathrm{Pic}_{X/R}=\left(\mathrm{Alb}_{X/R}\right)^{\vee}$. The induced map
\begin{equation*}
\mathbb{D}(\widehat{\mathrm{Pic}}_{X/R})\rightarrow H^{1}_{\mathrm{cris}}(X/W(R))\rightarrow H^{1}(X,W\mathcal{O}_{X})
\end{equation*}
is a map of Dieudonn\'{e} modules. The induced map 
\begin{equation*}
\mathbb{D}(\widehat{\mathrm{Pic}}_{X/R})/I_{R}\mathbb{D}(\widehat{\mathrm{Pic}}_{X/R})\rightarrow H^{1}(X,W\mathcal{O}_{X})/\mathrm{im}V\simeq H^{1}(X,\mathcal{O}_{X})
\end{equation*}
is a homomorphism of free $R$-modules of rank $=\dim\mathrm{Pic}_{X/R}$, and it is an isomorphism because it is an isomorphism after base change along $R\rightarrow k$ since $X_{k}$ is ordinary and using \cite[Remarque 3.11.2]{Ill79}. Hence $\mathbb{D}(\widehat{\mathrm{Pic}}_{X/R})\rightarrow H^{1}(X,W\mathcal{O}_{X})$ is an isomorphism by Lemma \ref{(iii)}. Since $\mathbb{D}(\widehat{\mathrm{Pic}}_{X/R})$ carries the structure of a multiplicative display (because $\widehat{\mathrm{Pic}}_{X/R}$ is multiplicative, since $X_{k}$ is ordinary), the isomorphism imposes the structure of a multiplicative display on $H^{1}(X,W\mathcal{O}_{X})$. It identifies $H^{1}(X,W\mathcal{O}_{X})$ as a direct summand of $H_{\mathrm{cris}}^{1}(X/W(R))$.

Now consider the induced map
\begin{equation*}
\eta:\mathcal{P}^{\mathrm{\acute{e}t}}=\mathbb{D}(\mathrm{Pic}_{X/R}(p)^{\mathrm{\acute{e}t}})\rightarrow\mathbb{H}^{0}(X,W\Omega_{X/R}^{\geq 1})\rightarrow H^{0}(X,W\Omega_{X/R}^{1})
\end{equation*}
which is compatible with the Frobenius on the left and $pF$ on $H^{0}(X,W\Omega_{X/R}^{1})$. We already have a commutative diagram
\begin{equation*}
\begin{tikzpicture}[descr/.style={fill=white,inner sep=1.5pt}]
        \matrix (m) [
            matrix of math nodes,
            row sep=4em,
            column sep=2em,
            text height=1.5ex, text depth=0.25ex
        ]
        { \mathcal{P}^{\mathrm{\acute{e}t}} & H^{0}(X,W\Omega_{X/R}^{1})  \\
       \mathcal{P}^{\mathrm{\acute{e}t}} & H^{0}(X,W\Omega_{X/R}^{1}) \\};

        \path[overlay,->, font=\scriptsize] 
        (m-1-1) edge node [above]{$\eta$} (m-1-2)
        (m-2-1) edge node [above]{$\eta$} (m-2-2)
        (m-1-1) edge node [left]{$F_{1}$} (m-2-1)
        (m-1-2) edge node [right]{$F$} (m-2-2)
        ;
                                
\end{tikzpicture}
\end{equation*}
Since $\mathcal{P}^{\mathrm{\acute{e}t}}$ is an \'{e}tale display, $F_{1}$ is defined on the the whole of $\mathcal{P}^{\mathrm{\acute{e}t}}$. On the versal deformation of $X$ the diagram commutes because it commutes after multiplication by $p$ and $p$ is injective on the versal deformation. Then the diagram
\begin{equation*}
\begin{tikzpicture}[descr/.style={fill=white,inner sep=1.5pt}]
        \matrix (m) [
            matrix of math nodes,
            row sep=4em,
            column sep=2em,
            text height=1.5ex, text depth=0.25ex
        ]
        { \mathcal{P}^{\mathrm{\acute{e}t}} & H^{0}(X,W\Omega_{X/R}^{1})  \\
       I_{R}\mathcal{P}^{\mathrm{\acute{e}t}} & H^{0}(X,W\Omega_{X/R}^{1}) \\};

        \path[overlay,->, font=\scriptsize] 
        (m-1-1) edge node [above]{$\eta$} (m-1-2)
        (m-2-1) edge node [above]{$\eta$} (m-2-2)
        (m-2-1) edge node [left]{$F_{2}$} (m-1-1)
        (m-1-2) edge node [right]{$V$} (m-2-2)
        ;
                                
\end{tikzpicture}
\end{equation*}
with $F_{2}(\tensor[^V]{\xi}{}x)=\xi F_{1}x$ commutes, hence we get an induced map of free $R$-modules
\begin{equation*}
\overline{\eta}:\mathcal{P}^{\mathrm{\acute{e}t}}/I_{R}\mathcal{P}^{\mathrm{\acute{e}t}}\rightarrow H^{0}(X,W\Omega_{X/R}^{1})/\mathrm{im}\,V\cong H^{0}(X,\Omega_{X/R}^{1})
\end{equation*}
of rank $=\dim\mathrm{Pic}^{0}_{X/R}$. It is enough to show that $\overline{\eta}$ is surjective to show that it is an isomorphism. We show this after base change along $R\rightarrow k$. Over $k$ it is known that $\mathbb{D}(\mathrm{Pic}_{X_{k}}(p)^{\mathrm{\acute{e}t}})$ is the slope $p$-part of in $H_{\rm{cris}}^{1}(X_{k}/W(k))$, hence is isomorphic to $H^{0}(X_{k},W\Omega_{X_{k}/k}^{1})$ because we are in the ordinary case. Since the map $H^{0}(X_{k},W\Omega_{X_{k}/k}^{1})\rightarrow H^{0}(X_{k},\Omega_{X_{k}/k}^{1})$ is surjective, $\overline{\eta}$ is surjective and hence an isomorphism. Applying Lemma \ref{(iii)} shows that $\eta:\mathbb{D}(\mathrm{Pic}_{X/R}(p)^{\mathrm{\acute{e}t}})\rightarrow H^{0}(X,W\Omega_{X/R}^{1})$ is an isomorphism. One consequence of the case $i=0$, $j=1$ is that $\mathbb{H}^{0}(W\Omega^{1}_{X/R}\xrightarrow{d}W\Omega_{X/R}^{2})=H^{0}(X,W\Omega_{X/R}^{1})$ in diagram \eqref{diagram}. Indeed, the isomorphism $\eta$ induces a surjection $\mathbb{H}^{0}(X,W\Omega_{X/R}^{\geq 1})\rightarrow H^{0}(X,W\Omega_{X/R}^{1})$ so the map $\mathbb{H}^{0}(W\Omega_{X/R}^{1}\xrightarrow{d}W\Omega_{X/R}^{2})\rightarrow H^{0}(X,W\Omega_{X/R}^{1})$ is surjective too, and hence the identity. The commutativity of the diagram
\begin{equation*}
\begin{tikzpicture}[descr/.style={fill=white,inner sep=1.5pt}]
        \matrix (m) [
            matrix of math nodes,
            row sep=4em,
            column sep=2em,
            text height=1.5ex, text depth=0.25ex
        ]
        { H_{\mathrm{cris}}^{1}(X/W(R)) & H^{1}(X,W\mathcal{O}_{X})  \\
       H_{\mathrm{dR}}^{1}(\mathfrak{X}/W(R)) & H^{1}(\mathfrak{X},\mathcal{O}_{\mathfrak{X}}) \\};

        \path[overlay,->, font=\scriptsize] 
        (m-1-1) edge node [left]{$\cong$} (m-2-1)
        (m-1-2) edge node [right]{$\cong$} (m-2-2)
        ;
        
        \path[overlay,->>, font=\scriptsize]
        (m-1-1) edge (m-1-2)
        (m-2-1) edge (m-2-2)
        ;
                                
\end{tikzpicture}
\end{equation*}
implies the isomorphism $H^{0}(X,W\Omega^{1}_{X/R})\cong H^{0}(\mathfrak{X},\Omega_{\mathfrak{X}/W(R)}^{1})$. (Note that $H^{1}(\mathfrak{X},\mathcal{O}_{\mathfrak{X}})$ is the tangent space of $\widehat{\mathrm{Pic}}_{\mathfrak{X}/W(R)}$, hence is isomorphic to the value of the Dieudonn\'{e} crystal of $\widehat{Pic}_{\mathfrak{X}/W(R)}$ at $W(R)$, which is $H^{1}(X,W\mathcal{O}_{X})$ by rigidity of $\widehat{Pic}$). The decomposition \eqref{direct sum bbm} then reflects the Hodge-Witt decomposition in degree one. We have isomorphisms
\begin{equation*}
H^{1}_{\mathrm{cris}}(\mathrm{Alb}_{X/R}/W(R))\simeq H^{1}_{\mathrm{cris}}(X/W(R)\simeq\mathbb{D}(\widehat{\mathrm{Pic}}_{X/R})\oplus\mathbb{D}(\mathrm{Pic}_{X/R}(p)^{\mathrm{\acute{e}t}})\,.
\end{equation*}
The display structure on $H_{\rm{cris}}^{1}(X/W(R))$ arising from the Nygaard complex $N^{1}W\Omega_{X/R}^{\bullet}$ has been analysed in \cite[3.4]{LZ04}. We have $P=H_{\rm{cris}}^{1}(X/W(R))$, $Q=\ker(H_{\rm{cris}}^{1}(X/W(R))\rightarrow H^{1}(X,\mathcal{O}_{X})$, $F$ is the crystalline Frobenius, and $F_{1}$ is defined on the Nygaard filtration $Q\cong\mathbb{H}^{1}(X,N^{1}W\Omega_{X/R}^{\bullet})$. We see then that
\begin{equation*}
Q=I_{R}\mathbb{D}(\widehat{\mathrm{Pic}}_{X/R})\oplus\mathbb{D}(\mathrm{Pic}_{X/R}(p)^{\mathrm{\acute{e}t}})
\end{equation*}
because $H^{1}(X,\mathcal{O}_{X})$ is the tangent space of $\widehat{\mathrm{Pic}}_{X/R}$ and we have an exact sequence 
\begin{equation*}
0\rightarrow I_{R}\mathbb{D}(\widehat{\mathrm{Pic}}_{X/R})\rightarrow\mathbb{D}(\widehat{\mathrm{Pic}}_{X/R})\rightarrow H^{1}(X,\mathcal{O}_{X})\rightarrow 0\,.
\end{equation*}
On $I_{R}\mathbb{D}(\widehat{\mathrm{Pic}}_{X/R})$ the map $F_{1}$ is defined by $F_{1}(\tensor[^V]{\xi}{}\alpha)=\xi F\alpha$. Since $\mathbb{D}(\mathrm{Pic}_{X/R}(p)^{\mathrm{\acute{e}t}})$ is an \'{e}tale display, $F_{1}$ is defined on $\mathbb{D}(\mathrm{Pic}_{X/R}(p)^{\mathrm{\acute{e}t}})$ given that for a display of an \'{e}tale group one has ``$P=Q$''. We conclude that the above decomposition is a direct sum of displays. 

Before we can finish the proof of Theorem \ref{surfaces} we must prove Proposition \ref{HW} in the cases $i=1,2$, $j=1$, $\dim X=2$. For $i=1$ the proof is very similar as for \cite[Lemma 46]{LZ19}.

Then diagram (92) in \cite{LZ19} holds verbatim for general smooth projective surfaces:
\\
\begin{equation}
\label{diagram from LZ19}
\begin{adjustbox}{width=12cm}
\begin{tikzpicture}[descr/.style={fill=white,inner sep=1.5pt}]
        \matrix (m) [
            matrix of math nodes,
            row sep=4em,
            column sep=2em,
            text height=1.5ex, text depth=0.25ex
        ]
        { H^{0}(X,W\Omega^{2}) & \mathbb{H}^{1}(X,W\Omega^{1}\xrightarrow{d} W\Omega^{2}) & H^{1}(X,W\Omega^{1}) & H^{1}(X,W\Omega^{2}) \\
       H^{0}(X,W\Omega^{2}) & \mathbb{H}
        ^{1}(X,W\Omega^{1}\xrightarrow{dV}W\Omega^{2}) & H^{1}(X,W\Omega^{1}) & H^{1}(X,W\Omega^{2}) \\
       };

        \path[overlay,->, font=\scriptsize] 
        (m-2-1) edge node [right]{$=$} (m-1-1)
        (m-2-3) edge node [right] {$V$} (m-1-3)
        (m-2-2) edge node[right]{$\hat{\alpha}$} (m-1-2)
        (m-1-2) edge (m-1-3)
        (m-2-2) edge (m-2-3)
        (m-1-3) edge (m-1-4)
        (m-2-3) edge (m-2-4)
        (m-2-4) edge node [right]{$=$} (m-1-4)
        ;

        \path[overlay, right hook->, font=scriptsize]
        (m-1-1) edge (m-1-2)
        (m-2-1) edge (m-2-2);
                        
\end{tikzpicture}
\end{adjustbox}
\end{equation}
As in cohomological degree $1$ one sees that the composite map
\begin{equation*}
I_{R}H^{2}(\mathfrak{X},\mathcal{O}_{\mathfrak{X}})\rightarrow\mathbb{H}^{2}(\mathcal{F}^{2}\Omega_{\mathfrak{X}/W(R)}^{\bullet})\cong\mathbb{H}^{2}(N^{2}W\Omega_{X/R}^{\bullet})\rightarrow H^{2}(X,W\mathcal{O}_{X})\xrightarrow{V}VH^{2}(X,W\mathcal{O}_{X})
\end{equation*}
agrees with the isomorphism constructed earlier. Hence $\mathbb{H}^{2}(N^{2}W\Omega_{X/R}^{\bullet})\rightarrow H^{2}(X,W\mathcal{O}_{X})$ is surjective. 
Then we have a commutative diagram of isomorphisms
\begin{equation*}
\begin{tikzpicture}[descr/.style={fill=white,inner sep=1.5pt}]
        \matrix (m) [
            matrix of math nodes,
            row sep=4em,
            column sep=2em,
            text height=1.5ex, text depth=0.25ex
        ]
        { 0 & \mathbb{H}^{2}(X,W\Omega_{X/R}^{1}\xrightarrow{d}W\Omega_{X/R}^{2}) & H_{\rm{cris}}^{3}(X/W(R))  \\
      0 & \mathbb{H}^{2}(X,W\Omega_{X/R}^{1}\xrightarrow{dV}W\Omega_{X/R}^{2}) & \mathbb{H}^{3}(X,N^{2}W\Omega_{X/R}^{\bullet})  \\};

        \path[overlay,->, font=\scriptsize] 
        (m-1-2) edge node[above]{$\cong$}(m-1-3)
        (m-2-2) edge node[above]{$\cong$}(m-2-3)
        (m-2-2) edge node [left]{$(V,\mathrm{id})$} (m-1-2)
        (m-2-3) edge (m-1-3)
        (m-1-1) edge (m-1-2)
        (m-2-1) edge (m-2-2)
        ;
                                
\end{tikzpicture}
\end{equation*}
and the right vertical map can be identified under the isomorphism between crystalline and de Rham cohomology with the injection
\begin{equation*}
I_{R}H^{2}(\mathfrak{X},\Omega_{\mathfrak{X}/W(R)}^{1})\oplus H^{1}(\mathfrak{X},\Omega_{\mathfrak{X}/W(R)}^{2})\rightarrow H^{2}(\mathfrak{X},\Omega_{\mathfrak{X}/W(R)}^{1})\oplus H^{1}(\mathfrak{X},\Omega_{\mathfrak{X}/W(R)}^{2})\,,
\end{equation*}
hence $(V,\mathrm{id})$ is injective in the above diagram. This fact together with the injectivity of $\hat{\alpha}$ in diagram \eqref{diagram from LZ19} imply that $V$ is injective on $H^{1}(X,W\Omega_{X/R}^{1})$ and $\mathrm{coker} V\cong H^{1}(X,\Omega_{X/R}^{1})$. To finish the proof of Proposition \ref{HW} in the case $i=1$, $j=1$, we can apply \cite[Lemma 47]{LZ19}. Note that we also have $(P,Q,F,F_{1})=\mathbb{D}(\Phi_{X/R}^{\acute{e}t})$, the display of the \'{e}tale part of the extended formal Brauer group $\Phi_{X/R}$, and $\mathbb{D}(\Phi_{X/R}^{\acute{e}t})\xrightarrow{\simeq} H^{1}(X,W\Omega_{X/R}^{1})$ where the map is given by the composite map
\begin{equation*}
\mathbb{D}(\Phi_{X/R}^{\acute{e}t})\rightarrow\mathbb{H}^{2}(X,W\Omega_{X/R}^{\geq 1})\rightarrow H^{1}(X,W\Omega_{X/R}^{1})\,.
\end{equation*}
\cite[Lemma 47]{LZ19} implies that $\mathbb{H}^{2}(X,W\Omega_{X/R}^{\geq 1})\rightarrow H^{1}(X,W\Omega_{X/R}^{1})$ is surjective, hence we get a commutative diagram
\begin{equation*}
\begin{adjustbox}{width=12.5cm}
\begin{tikzpicture}[descr/.style={fill=white,inner sep=1.5pt}]
        \matrix (m) [
            matrix of math nodes,
            row sep=4em,
            column sep=1.5em,
            text height=1.5ex, text depth=0.25ex
        ]
        { H^{1}(X,W\Omega^{2}) & \mathbb{H}^{2}(X,W\Omega^{1}\xrightarrow{d} W\Omega^{2}) & H^{2}(X,W\Omega^{1}) & H^{2}(X,W\Omega^{2})  & \mathbb{H}^{3}(X,W\Omega^{1}\xrightarrow{d} W\Omega^{2}) \\
       H^{1}(X,W\Omega^{2}) & \mathbb{H}^{2}(X,W\Omega^{1}\xrightarrow{dV} W\Omega^{2}) & H^{2}(X,W\Omega^{1}) & H^{2}(X,W\Omega^{2})  & \mathbb{H}^{3}(X,W\Omega^{1}\xrightarrow{dV} W\Omega^{2}) \\
       };

        \path[overlay,->, font=\scriptsize] 
        (m-2-1) edge node [right]{$=$} (m-1-1)
        (m-2-3) edge node [right] {$V$} (m-1-3)
        (m-2-2) edge node[right]{$(V,\mathrm{id})$} (m-1-2)
        (m-1-2) edge (m-1-3)
        (m-2-2) edge (m-2-3)
        (m-1-3) edge (m-1-4)
        (m-2-3) edge (m-2-4)
        (m-2-4) edge node [right]{$=$} (m-1-4)
        (m-2-5) edge node [right]{$(V,\mathrm{id})$} (m-1-5)
        (m-1-4) edge (m-1-5)
        (m-2-4) edge (m-2-5)
        ;

        \path[overlay, right hook->, font=scriptsize]
        (m-1-1) edge (m-1-2)
        (m-2-1) edge (m-2-2);
                        
\end{tikzpicture}
\end{adjustbox}
\end{equation*}
We have already seen that $(V,\mathrm{id})$ is injective in cohomological degree $2$; a similar argument shows that $(V,\mathrm{id})$ is injective in cohomological degree $3$ as well. Then the above diagram implies that $V$ is injective on $H^{2}(X,W\Omega_{X/R}^{1})$ and its cokernel is $H^{2}(X,\Omega_{X/R}^{1})$.

Let $\mathcal{A}lb(p)^{0}(1)$ be the (twisted) connected component of the $p$-divisible group associated to the Albanese scheme. It is known that this is the Cartier dual of the \'{e}tale $p$-divisible group $\mathrm{Pic}_{X/R}(p)^{\mathrm{\acute{e}t}}$. Let $\mathbb{D}(\mathcal{A}lb(p)^{0})(1)$ be the associated display. Under the Poincar\'{e} duality pairing
\begin{equation*}
H_{\rm{cris}}^{1}(X/W(R))\times H_{\rm{cris}}^{3}(X/W(R))(2)\rightarrow W(R)
\end{equation*}
the dual of the map 
\begin{equation*}
\mathbb{D}(\mathrm{Pic}_{X/R}(p)^{\mathrm{\acute{e}t}})\rightarrow H_{\rm{cris}}^{1}(X/W(R))
\end{equation*}
is the map
\begin{equation*}
H_{\rm{cris}}^{3}(X/W(R))(2)\rightarrow\mathbb{D}(\mathcal{A}lb(p)^{0})(1)
\end{equation*}
hence $\mathbb{D}(\mathcal{A}lb(p)^{0})(-1)$ is a direct summand of 
\begin{equation*}
H_{\rm{cris}}^{3}(X/W(R))\cong\mathbb{H}^{2}(X,W\Omega_{X/R}^{1}\xrightarrow{d}W\Omega_{X/R}^{2})\,.
\end{equation*}
It induces a map
\begin{equation*}
\mathbb{D}(\mathcal{A}lb(p)^{0})(-1)\rightarrow H^{2}(X,W\Omega_{X/R}^{1})
\end{equation*}
where the Frobenius on the right is induced by the crystalline Frobenius, hence by $pF$, where $F$ is the Frobenius map on $W\Omega_{X/R}^{1}$. By an analogous argument as for $H^{i}(X,W\Omega_{X/R}^{1})$ ($i=0,1$) we get a homomorphism
\begin{equation*}
\varsigma\,:\,\mathbb{D}(\mathcal{A}lb(p)^{0})\rightarrow H^{2}(X,W\Omega_{X/R}^{1})
\end{equation*}
where $\mathbb{D}(\mathcal{A}lb(p)^{0})$ is the multiplicative display associated to $\mathcal{A}lb(p)^{0}$ and the Frobenius on $H^{2}(X,W\Omega_{X/R}^{1})$ is the one induced by $F$ on $W\Omega_{X/R}^{1}$. The induced map
\begin{equation*}
\mathbb{D}(\mathcal{A}lb(p)^{0})/I_{R}\mathbb{D}(\mathcal{A}lb(p)^{0})\rightarrow H^{2}(X,W\Omega_{X/R}^{1})/\mathrm{im}\,V\cong H^{2}(X,\Omega_{X/R}^{1})
\end{equation*}
is an isomorphism because it is so after base change along $R\rightarrow k$, since we are in the ordinary case. Hence $\varsigma$ is an isomorphism by Lemma \ref{(iii)}.

Now we can complete the proof of Theorem \ref{surfaces}:

Under the duality of $H_{\rm{cris}}^{1}$ and $H_{\rm{cris}}^{3}$ we get a direct summand decomposition
\begin{align}
H_{\rm{cris}}^{3}(X/W(R))
& =H^{2}(X,W\Omega_{X/R}^{1})\oplus\mathbb{D}(\mathcal{A}lb(p)^{\mathrm{\acute{e}t}})(-1)\\
& =\mathbb{D}(\mathcal{A}lb(p)^{0})(-1)\oplus\mathbb{D}(\mathcal{A}lb(p)^{\mathrm{\acute{e}t}}(-1)) 
\end{align}
of Dieudonn\'{e} modules, where $\mathcal{A}lb(p)^{\mathrm{\acute{e}t}}(1)$ is the Cartier dual of $\mathrm{Pic}_{X/R}^{0}(p)^{0}$. Since the cup product of $H^{1}(X,W\mathcal{O}_{X})$ with $H^{2}(X,W\Omega_{X/R}^{1})$ vanishes, we get an induced map
\begin{equation*}
\mathbb{D}(\mathcal{A}lb(p)^{\mathrm{\acute{e}t}})(-1)\rightarrow H^{1}(X,W\Omega_{X/R}^{2})
\end{equation*}
where the Frobenius on the right is induced by $p^{2}F$. It is clear that this map is an isomorphism too.

For $H_{\rm{cris}}^{2}$ we can follow the argument in the proof of \cite[Theorem 40]{LZ19} to get the Hodge-Witt decomposition 
\begin{equation*}
H_{\rm{cris}}^{2}(X/W(R))=H^{2}(X,W\mathcal{O}_{X})\oplus H^{1}(X,W\Omega_{X/R}^{1})\oplus H^{0}(X,W\Omega_{X/R}^{2})
\end{equation*} 
into a direct sum of displays associated to $\widehat{\mathrm{Br}}_{X/R}$, $\Phi_{X/R}^{\acute{e}t}$ and the twisted dual $\widehat{\mathrm{Br}}_{X/R}^{\ast}(-1)$. This finishes the proof of Theorem \ref{surfaces}.

\section{Hodge-Witt decomposition for abelian schemes}

For abelian schemes we reformulate Proposition \ref{HW} as follows:

\begin{prop}
Let $A$ be an abelian scheme over $\mathrm{Spec}\,R$, with $d=\dim A<p$, such that the closed fibre is ordinary. Fix a pair $(i,j)$ with $0\leq i,j\leq d$. Then we have
\begin{enumerate}[(i)]
\item There is an exact sequence
\begin{equation*}
0\rightarrow H^{i}(A,W\Omega_{A/R}^{j})\xrightarrow{V}H^{i}(A,W\Omega_{A/R}^{j})\rightarrow H^{i}(A,\Omega_{A/R}^{j})\rightarrow 0
\end{equation*}
induced by the action of $V$ on $W\Omega_{A/R}^{j}$.

\item We have canonical isomorphisms
\begin{equation*}
\bigwedge^{j}(H^{0}(A,W\Omega_{A/R}^{1}))\otimes\bigwedge^{i}(H^{1}(A,W\mathcal{O}_{A}))\xrightarrow{\varsigma}H^{i}(A,W\Omega_{A/R}^{j})
\end{equation*}
compatible with the Frobenius action $\bigwedge^{j}F\otimes\bigwedge^{i}F$ on the left and the Frobenius induced by $F$ on $W\Omega_{A/R}^{j}$ on the right, such that there are isomorphisms induced by the maps $\alpha_{r}$ in \eqref{alpha r}
\begin{equation*}
H^{i}(A,W\Omega_{A/R}^{j})\cong H^{i}(\mathcal{A},\Omega_{\mathcal{A}/W(R)}^{j})
\end{equation*} 
with isomorphisms 
\begin{equation*}
VH^{i}(A,W\Omega_{A/R}^{j})\cong I_{R}H^{i}(\mathcal{A},\Omega_{\mathcal{A}/W(R)}^{j})\,.
\end{equation*} 
In particular,
\begin{equation*}
(P,Q,F,F_{1})=(H^{i}(A,W\Omega_{A/R}^{j}),VH^{i}(A,W\Omega_{A/R}^{j}),F,V^{-1})
\end{equation*}
is a multiplicative display.
\end{enumerate} 
\end{prop}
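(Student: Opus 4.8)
The plan is to exploit the feature special to abelian schemes that the whole de Rham--Witt cohomology is generated in cohomological and differential degree one by the cup product, so that everything reduces to the two building blocks $M:=H^{1}(A,W\mathcal{O}_{A})$ and $E:=H^{0}(A,W\Omega^{1}_{A/R})$. Both have already been treated: $M=\mathbb{D}(\widehat{\mathrm{Pic}}_{A/R})$ is a multiplicative display with respect to $F$ (the case $j=0$, $i=1$), while $E$ is a multiplicative display with respect to $F$ which is identified via $\eta$ with the étale summand $\mathbb{D}(\mathrm{Pic}_{A/R}(p)^{\mathrm{\acute{e}t}})$ of the decomposition \eqref{direct sum bbm} (the case $i=0$, $j=1$, handled exactly as in the surface case). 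The homomorphism $\varsigma$ will be the cup product
\[
\varsigma:\ \textstyle\bigwedge^{j}E\otimes\bigwedge^{i}M\longrightarrow H^{i}(A,W\Omega^{j}_{A/R}).
\]
Because $F$ is multiplicative on $W\Omega^{\bullet}_{A/R}$, the map $\varsigma$ automatically intertwines $\bigwedge^{j}F\otimes\bigwedge^{i}F$ with the Frobenius $F$ induced on the right, and graded-commutativity of the cup product lets it factor through the exterior powers.

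I would run the argument as an induction on $j$, simultaneously in all $i$, proving (i) then (ii) and then deducing the isomorphy of $\varsigma$ from Lemma \ref{(iii)}; the base case $j=0$ is the already-established multiplicativity of $H^{i}(A,W\mathcal{O}_{A})=\bigwedge^{i}M$. For the inductive step the first and most substantial task is the short exact sequence (i). By the inductive hypothesis the maps $\alpha_{s}$ are isomorphisms and $V$ is injective on $H^{i}(A,W\Omega^{s}_{A/R})$ for $s<j$, so the divided maps $\tensor*[^F]{\alpha}{_{s}}$ of \eqref{F alpha} are bijective and diagram \eqref{comm square 2} applies with $r=j$ (legitimate because $j<d$ forces $j+1\le d<p$, while the remaining case $j=d$ is handled directly since $\Omega^{d+1}_{A/R}=0$). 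It shows that the comparison maps $\mathbb{H}^{i}(A,N^{j+1}W_{\bigcdot}\Omega^{\geq j}_{A/R})\to\mathbb{H}^{i}(A,W_{\bigcdot}\Omega^{\geq j}_{A/R})$ and their de Rham analogues are injective with cokernel $H^{i}(A,\Omega^{j}_{A/R})$. Feeding this into the comparison diagrams relating $\mathbb{H}^{\bullet}(W\Omega^{\geq j})$, $H^{i}(W\Omega^{j})$ and $H^{i}(W\Omega^{j+1})$ — precisely as in \eqref{diagram} and \eqref{diagram from LZ19} for surfaces — together with the injectivity of the relevant $(V,\mathrm{id})$-maps, forces $V$ to be injective on $H^{i}(A,W\Omega^{j}_{A/R})$ with cokernel $H^{i}(A,\Omega^{j}_{A/R})$, which is (i).

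With (i) available I would then verify the hypotheses of Lemma \ref{(iii)} for $\varsigma$. The inclusion $\varsigma(I_{R}P)\subseteq VH^{i}(A,W\Omega^{j}_{A/R})$ follows from the identity $V(\varsigma(F_{1}x))=\varsigma(x)$ established in the remark accompanying Proposition \ref{HW}. The induced map $\overline{\varsigma}$ on $P/I_{R}P\cong H^{i}(A,\Omega^{j}_{A/R})$ reduces, under the Hodge-level identifications $M/I_{R}M\cong H^{1}(A,\mathcal{O}_{A})$ and $E/I_{R}E\cong H^{0}(A,\Omega^{1}_{A/R})$, to the cup product $\bigwedge^{j}H^{0}(A,\Omega^{1}_{A/R})\otimes\bigwedge^{i}H^{1}(A,\mathcal{O}_{A})\to H^{i}(A,\Omega^{j}_{A/R})$. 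This last map is an isomorphism of free $R$-modules: since $\Omega^{1}_{A/R}$ is the pullback of the free sheaf $\omega_{A/R}$ and $H^{\bullet}(A,\mathcal{O}_{A})$ is the exterior algebra on $H^{1}(A,\mathcal{O}_{A})$, the projection formula gives the standard exterior-algebra structure of the Hodge cohomology of an abelian scheme, valid over $R$ by (A1) and \cite[Proposition 2.5.2]{BBM82}. Hence (i) and (ii) both hold and Lemma \ref{(iii)} upgrades $\varsigma$ to an isomorphism, endowing $H^{i}(A,W\Omega^{j}_{A/R})$ with the multiplicative display $(P,Q=VP,F,F_{1}=V^{-1})$. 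Combining the building-block comparisons $M\cong H^{1}(\mathcal{A},\mathcal{O}_{\mathcal{A}})$ and $E\cong H^{0}(\mathcal{A},\Omega^{1}_{\mathcal{A}/W(R)})$ with the exterior-algebra structure of $H^{\bullet}(\mathcal{A},\Omega^{\bullet}_{\mathcal{A}/W(R)})$ and the compatibility of $\varsigma$ with the maps $\alpha_{r}$ of \eqref{alpha r} then yields $H^{i}(A,W\Omega^{j}_{A/R})\cong H^{i}(\mathcal{A},\Omega^{j}_{\mathcal{A}/W(R)})$ and $VH^{i}(A,W\Omega^{j}_{A/R})\cong I_{R}H^{i}(\mathcal{A},\Omega^{j}_{\mathcal{A}/W(R)})$.

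The hard part is the exact sequence (i) — concretely, the injectivity of $V$ on $H^{i}(A,W\Omega^{j}_{A/R})$ in the inductive step. The Hodge-level isomorphism is classical, but injectivity of $V$ must be propagated through the tower of Nygaard complexes $N^{r}W_{\bigcdot}\Omega^{\bullet}_{A/R}$ while controlling the connecting homomorphisms; because $V$ interacts with the cup product only through the twisted rule $V(x\cdot Fy)=Vx\cdot y$, no purely formal Künneth argument is available, and one is forced through the diagram chases of Section 3. A secondary point requiring care is the verification that $\varsigma$ is compatible with the comparison maps $\alpha_{r}$ and with the display operations $\iota_{r},\alpha_{r}$, so that the resulting decomposition is genuinely one of displays rather than merely of $W(R)$-modules.
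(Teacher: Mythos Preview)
Your proposal is correct and follows essentially the same approach as the paper: induction on $j$ using the Nygaard comparison \eqref{comm square 2} (with its implicit inner induction on $s$ peeling off one degree at a time via the bijections $\tensor[^F]{\alpha}{_s}$) to get the injectivity of $(V,\mathrm{id})$ and hence the exact sequence (i), then defining $\varsigma$ as the cup product, identifying $\overline{\varsigma}$ with the classical exterior-algebra isomorphism on Hodge cohomology of an abelian scheme, and concluding by Lemma \ref{(iii)}. Your handling of the edge case $j=d$ and your remark that the compatibility of $\varsigma$ with the maps $\alpha_{r}$ follows from the multiplicativity of the comparison under tensor products are both points the paper makes only implicitly or in passing.
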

\begin{proof}
We prove this by induction on $j$, the case $j=0$ having already been covered. Assume that the proposition holds for all $j<r$ and all $i$. Consider the exact sequence
\begin{equation*}
\begin{tikzpicture}[descr/.style={fill=white,inner sep=1.5pt}]
        \matrix (m) [
            matrix of math nodes,
            row sep=4em,
            column sep=1.5em,
            text height=1.5ex, text depth=0.25ex
        ]
        { 0 & \mathbb{H}^{i-1}(A,N^{r+1}W\Omega_{A/R}^{\geq 1}) & \mathbb{H}^{i}(A,N^{r+1}W\Omega_{A/R}^{\bullet}) & H^{i}(A,W\mathcal{O}_{A}) & 0 \\
       \ & \ & \mathbb{H}^{i}(\mathcal{A},\mathcal{F}^{r+1}\Omega_{\mathcal{A}/W(R)}^{\bullet}) & I_{R}H^{i}(\mathcal{A},\mathcal{O}_{\mathcal{A}}) & \ \\
       };

        \path[overlay,->, font=\scriptsize] 
        (m-1-1) edge (m-1-2)
        (m-2-3) edge node [right] {$\cong$} (m-1-3)
        (m-1-2) edge (m-1-3)
        (m-1-3) edge (m-1-4)
        (m-2-3) edge (m-2-4)
        (m-2-4) edge node[left]{$\cong$} node [right]{$\tensor[^F]{\alpha}{_0}$} (m-1-4)
        (m-1-4) edge (m-1-5)
        ;     
                                
\end{tikzpicture}
\end{equation*}
where the last map is surjective because the $E_{1}$-hypercohomology spectral sequence associated to $\mathcal{F}^{r+1}\Omega_{\mathcal{A}/W(R)}^{\bullet}$ degenerates. The same argument applies to the previous cohomological degree $i-1$, hence the first map is injective, and we have
\begin{equation*}
\mathbb{H}^{i-1}(A,N^{r+1}W\Omega_{A/R}^{\geq 1})\cong \mathbb{H}^{i-1}(\mathcal{A},\mathcal{F}^{r+1}\Omega_{\mathcal{A}/W(R)}^{\geq 1})\,.
\end{equation*}
By induction one proves that for all $s\leq r$ we have
\begin{equation*}
\mathbb{H}^{i-s}(A,N^{r+1}W\Omega_{A/R}^{\geq s})\cong\mathbb{H}^{i-s}(\mathcal{A},\mathcal{F}^{r+1}\Omega_{\mathcal{A}/W(R)}^{\geq s})\,.
\end{equation*}
Indeed, we have an exact sequence
\begin{equation*}
\begin{adjustbox}{width=12.5cm}
\begin{tikzpicture}[descr/.style={fill=white,inner sep=1.5pt}]
        \matrix (m) [
            matrix of math nodes,
            row sep=4em,
            column sep=1.5em,
            text height=1.5ex, text depth=0.25ex
        ]
        { 0 & \mathbb{H}^{i-(s+1)}(A,N^{r+1}W\Omega_{A/R}^{\geq s+1}) & \mathbb{H}^{i-s}(A,N^{r+1}W\Omega_{A/R}^{\geq s}) & H^{i-s}(A,W\Omega_{A/R}^{s}) & 0 \\
       \ & \ & \mathbb{H}^{i-s}(\mathcal{A},\mathcal{F}^{r+1}\Omega_{\mathcal{A}/W(R)}^{\geq s}) & I_{R}H^{i-s}(\mathcal{A},\Omega_{\mathcal{A}/W(R)}^{s}) & \ \\
       };

        \path[overlay,->, font=\scriptsize] 
        (m-1-1) edge (m-1-2)
        (m-2-3) edge node [right] {$\cong$} (m-1-3)
        (m-1-2) edge (m-1-3)
        (m-1-3) edge (m-1-4)
        (m-2-3) edge (m-2-4)
        (m-2-4) edge node[left]{$\cong$} node [right]{$\tensor[^F]{\alpha}{_s}$} (m-1-4)
        (m-1-4) edge (m-1-5)
        ;     
                                
\end{tikzpicture}
\end{adjustbox}
\end{equation*}
and hence we get
\begin{equation*}
\mathbb{H}^{i-(s+1)}(A,N^{r+1}W\Omega_{A/R}^{\geq s+1})\cong\mathbb{H}^{i-(s+1)}(\mathcal{A},\mathcal{F}^{r+1}\Omega_{\mathcal{A}/W(R)}^{\geq s+1})\,.
\end{equation*}
We conclude under this isomorphism the map 
\begin{equation*}
\mathbb{H}^{i}(A,N^{r+1}W\Omega_{A/R}^{\geq r})\xrightarrow{(V,\mathrm{id})}\mathbb{H}^{i}(A,W\Omega_{A/R}^{\geq r})
\end{equation*}
corresponds to 
\begin{equation*}
\mathbb{H}^{i}(\mathcal{A},\mathcal{F}^{r+1}\Omega_{\mathcal{A}/W(R)}^{\geq r})\rightarrow\mathbb{H}^{i}(\mathcal{A},\Omega_{\mathcal{A}/W(R)}^{\geq r})
\end{equation*}
and hence is injective and the cokernel is isomorphic to $H^{i}(A,\Omega_{A/R}^{r})$ (see \eqref{comm square 2}).

Now consider the diagram
\begin{equation*}
\begin{tikzpicture}[descr/.style={fill=white,inner sep=1.5pt}]
        \matrix (m) [
            matrix of math nodes,
            row sep=4em,
            column sep=2em,
            text height=1.5ex, text depth=0.25ex
        ]
        { \mathbb{H}^{i}(W\Omega^{\geq r+1}) & \mathbb{H}^{i}(W\Omega^{\geq r}) & H^{i}(W\Omega^{r}) & \mathbb{H}^{i+1}(W\Omega^{\geq r+1}) & \ \\
      \mathbb{H}^{i}(W\Omega^{\geq r+1}) & \mathbb{H}^{i}(N^{r+1}W\Omega^{\geq r}) & H^{i}(W\Omega^{r}) & \mathbb{H}^{i+1}(W\Omega^{\geq r+1}) & \  \\
       };

        \path[overlay,->, font=\scriptsize] 
        (m-2-1) edge node [left]{$=$} node [right]{$\mathrm{id}$} (m-1-1)
        (m-2-3) edge node [right] {$V$} (m-1-3)
        (m-2-2) edge node[right]{$(V,\mathrm{id})$} (m-1-2)
        (m-1-2) edge (m-1-3)
        (m-2-2) edge (m-2-3)
        (m-1-3) edge (m-1-4)
        (m-2-3) edge (m-2-4)
        (m-2-4) edge node [right]{$\mathrm{id}$} node [left]{$=$} (m-1-4)
        (m-1-4) edge (m-1-5)
        (m-2-4) edge (m-2-5)
        (m-1-1) edge (m-1-2)
        (m-2-1) edge (m-2-2);
                        
\end{tikzpicture}
\end{equation*}
The injectivity of $(V,\mathrm{id})$ in cohomological degrees $i$ and $i+1$ implies that $V$ is injective on $H^{i}(A,W\Omega_{A/R}^{r})$ and has cokernel $H^{i}(A,\Omega_{A/R}^{r})$ as desired.

For the second part of the proposition, we have a commutative diagram (compare the cases $j=0$ and $i=0,j=1$)
\begin{equation*}
\begin{tikzpicture}[descr/.style={fill=white,inner sep=1.5pt}]
        \matrix (m) [
            matrix of math nodes,
            row sep=4em,
            column sep=2em,
            text height=1.5ex, text depth=0.25ex
        ]
        { \displaystyle\bigwedge\limits^{j}H^{0}(A,W\Omega_{A/R}^{1})\otimes\displaystyle\bigwedge\limits^{i}H^{1}(A,W\mathcal{O}_{A}) & H^{i}(A,W\Omega_{A/R}^{j}) & H^{i}(A,\Omega_{A/R}^{j}) \\
       \displaystyle\bigwedge\limits^{j}H^{0}(\mathcal{A},\Omega_{\mathcal{A}/W(R)}^{1})\otimes\displaystyle\bigwedge\limits^{i}H^{1}(\mathcal{A},\mathcal{O}_{\mathcal{A}}) & H^{i}(\mathcal{A},\Omega_{\mathcal{A}}^{j}) & \ \\};

        \path[overlay,->, font=\scriptsize] 
        (m-1-1) edge node [above]{$\varsigma$} (m-1-2)
        (m-2-1) edge node [above]{$\simeq$} (m-2-2)
        (m-2-1) edge node [left]{$\simeq$} (m-1-1)
        (m-1-2) edge (m-1-3)
        ;
        
        \path[overlay,->>, font=\scriptsize]
        (m-2-2) edge (m-1-3)
        ;
                        
\end{tikzpicture}
\end{equation*}
where the horizontal maps are cup products in cohomology.

We define $(P,Q=I_{R}P,F,F_{1})$ to be the multiplicative display given by setting $P=\displaystyle\bigwedge\limits^{j}H^{0}(A,W\Omega_{A/R}^{1})\otimes\bigwedge^{i}H^{1}(A,W\mathcal{O}_{A})$. Since $V\circ\varsigma\circ F_{1}=\varsigma|_{I_{R}P}$ we get the induced homomorphism of free $R$-modules of rank $h^{j,i}$
\begin{equation*}
\overline{\varsigma}\,:\, P/I_{R}P\rightarrow H^{i}(A,W\Omega_{A/R}^{j})/\mathrm{im}\,V\cong H^{i}(A,\Omega_{A/R}^{j})
\end{equation*}
which coincides with the cup product map
\begin{equation*}
\displaystyle\bigwedge\limits^{j}H^{0}(A,\Omega_{A/R}^{1})\otimes\bigwedge^{i}H^{1}(A,\mathcal{O}_{A})\rightarrow H^{i}(A,\Omega_{A/R}^{j})\,.
\end{equation*}
Since for abelian varieties we have $H_{\mathrm{dR}}^{s}(A/R)=\bigwedge^{s}H_{\mathrm{dR}}^{1}(A/R)$ the map $\overline{\varsigma}$ is an isomorphism. Therefore $\varsigma$ is an isomorphism by Lemma \ref{(iii)}.

Since the constructions of the maps $\alpha_{r}$ and $\tensor[^F]{\alpha}{_r}$ in section 3 are compatible with taking tensor products of complexes, the isomorphism $H^{i}(\mathcal{A},\Omega_{\mathcal{A}/W(R)}^{j})\xrightarrow{\sim}H^{i}(A,W\Omega_{A/R}^{j})$ obtained above coincides with the canonical map $\alpha_{j}$. 
\end{proof}

This finishes the proof of the Hodge-Witt decomposition of $H_{\rm{cris}}^{s}(A/W(R))$, i.e. Theorem \ref{abelian} holds. Note that the isomorphism
\begin{equation*}
\alpha_{j}:H^{i}(\mathcal{A},\Omega_{\mathcal{A}/W(R)}^{j})\rightarrow\mathbb{H}^{i}(\mathcal{A},\Omega_{\mathcal{A}/W(R)}^{\geq j})\rightarrow\mathbb{H}^{i}(A,W\Omega^{\geq j}_{A/R})\rightarrow H^{i}(A,W\Omega_{A/R}^{j})
\end{equation*}
splits the surjection $\mathbb{H}^{i}(A,W\Omega^{\geq j}_{A/R})\twoheadrightarrow H^{i}(A,W\Omega_{A/R}^{j})$ for all $j$ and hence induces the Hodge-Witt decomposition on crystalline cohomology.

\section{Hodge-Witt decomposition for $n$-folds}\label{n-fold section}

Let $\mathcal{S}$ be a smooth formal scheme over $\mathrm{Spf}\,W(k)$, and let $f\,:\,\mathcal{X}\rightarrow\mathcal{S}$ be a smooth and proper family of $n$-folds, where $n<p$. Suppose that the conditions (B1) and (B2) from the introduction are satisfied. In this section we shall prove the following theorem (Theorem \ref{general} from the introduction):

\begin{thm}\label{deformation}
Let $X_{k}:=\mathcal{X}\times_{\mathcal{S}}\mathrm{Spec}\,k$ be the fibre over an ordinary $k$-point $\mathrm{Spec}\,k\rightarrow\mathcal{S}$. Then for any commutative diagram
\begin{equation*}
\begin{tikzpicture}[descr/.style={fill=white,inner sep=1.5pt}]
        \matrix (m) [
            matrix of math nodes,
            row sep=2.5em,
            column sep=2.5em,
            text height=1.5ex, text depth=0.25ex
        ]
        { \mathrm{Spec}\,R & \mathcal{S}  \\
       \mathrm{Spec}\,k  & \ \\};

        \path[overlay,->, font=\scriptsize] 
        (m-1-1) edge (m-1-2)
        (m-2-1) edge (m-1-2)
        ;
        
        \path[overlay, right hook->, font=\scriptsize]
        (m-2-1) edge (m-1-1)
        ;
                                        
\end{tikzpicture}
\end{equation*}
where $R$ is an artinian local ring with residue field $k$, the deformation $X:=\mathcal{X}\times_{\mathcal{S}}\mathrm{Spec}\,R$ of $X_{k}$ admits a Hodge-Witt decomposition of $H_{\mathrm{cris}}^{s}(X/W(R))$ as displays in all degrees $0\leq s\leq 2n$.
\end{thm}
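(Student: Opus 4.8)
The plan is to reduce Theorem \ref{deformation} to Proposition \ref{HW} for every pair $(i,j)$ with $i+j=s$, exactly as in the proofs of Theorems \ref{surfaces} and \ref{abelian}, and then to assemble the decomposition from the splittings $\alpha_j$. First I would produce the compatible system of liftings needed to enter the framework of Sections 2 and 3. Since $\mathcal{S}$ is smooth over $\mathrm{Spf}\,W(k)$ and each truncation $W_{n}(R)\twoheadrightarrow R$ has nilpotent kernel $\tensor*[^V]{W_{n-1}(R)}{}$ (as $R$ is Artinian), the infinitesimal lifting criterion lets me lift $\mathrm{Spec}\,R\to\mathcal{S}$ to a compatible tower $\mathrm{Spec}\,W_{n}(R)\to\mathcal{S}$. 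Pulling back $f:\mathcal{X}\to\mathcal{S}$ along these maps gives smooth proper liftings $X_{n}/W_{n}(R)$ of $X$. Because $R^{j}f_{\ast}\Omega^{i}_{\mathcal{X}/\mathcal{S}}$ is locally free (B1) and commutes with base change, its pullback computes $H^{j}(X_{n},\Omega^{i}_{X_{n}/W_{n}(R)})$ and is $W_{n}(R)$-free, giving (A1); the degeneration (B2) pulls back to (A2). Thus the whole apparatus of Section 3 -- the maps $\alpha_{r}$ and $\tensor*[^F]{\alpha}{_{r}}$, the comparison $N^{r}W_{n}\Omega^{\bullet}_{X/R}\cong\mathcal{F}^{r}\Omega_{X_{n}/W_{n}(R)}^{\bullet}$ of Theorem \ref{filtered comparison}, and the commutative diagram \eqref{comm square 2} -- becomes available.

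Next I would prove Proposition \ref{HW} for all $(i,j)$ by induction on the form degree $j$, the case $j=0$ being treated as for $H^{i}(X,W\mathcal{O}_{X})$. Part (i) at level $r$ follows verbatim from the abelian argument: assuming (i)--(iii) for all $s<r$, the maps $\tensor*[^F]{\alpha}{_{s}}$ are bijective for $s<r$, so diagram \eqref{comm square 2} gives $\mathbb{H}^{i-s}(X,N^{r+1}W\Omega^{\geq s}_{X/R})\cong\mathbb{H}^{i-s}(\mathcal{X},\mathcal{F}^{r+1}\Omega_{\mathcal{X}/W(R)}^{\geq s})$ for $s\leq r$; the degeneration of both hypercohomology spectral sequences then forces $(V,\mathrm{id})$ to be injective in cohomological degrees $i$ and $i+1$, whence $V$ is injective on $H^{i}(X,W\Omega^{r}_{X/R})$ with cokernel $H^{i}(X,\Omega^{r}_{X/R})$. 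This step uses only (A1), (A2) and is independent of any special geometry.

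The crux, and the main obstacle, is part (ii): exhibiting a multiplicative display $\mathcal{P}=(P,I_{R}P,F,F_{1})$ together with a Frobenius-equivariant $\varsigma:P\to H^{i}(X,W\Omega^{j}_{X/R})$ inducing an isomorphism on cokernels of $V$. Here the devices of the surface and abelian cases (formal Picard/Brauer groups, exterior powers) are unavailable, and I would use ordinariness of $X_{k}$ directly. By \cite[Th\'{e}or\`{e}me 4.13]{IR83} the ordinary fibre has a slope decomposition, and the slope-$j$ part of $H^{i+j}_{\mathrm{cris}}(X_{k}/W(k))$, namely $H^{i}(X_{k},W\Omega^{j}_{X_{k}/k})$, is a multiplicative display (after the $(-j)$-twist the Frobenius is a unit). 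The display $H^{i+j}_{\mathrm{cris}}(X/W(R))$ of Section 2 reduces to this over $k$, and I would take $\mathcal{P}$ to be the lift of its slope-$j$ graded piece to $R$. The essential input is that multiplicative displays are rigid under Artinian deformation -- generalizing the rigidity of $\hat{\mathbb{G}}_{m}$ used in the surface case -- so $\mathcal{P}$ exists and is unique, and the slope filtration on the crystalline display supplies the Frobenius-compatible map $\varsigma$. That $\overline{\varsigma}:P/I_{R}P\to H^{i}(X,\Omega^{j}_{X/R})$ is an isomorphism is checked after base change along $R\to k$, where it is the identification of the slope-$j$ part of $H^{i+j}_{\mathrm{cris}}(X_{k}/W(k))$ with $H^{i}(X_{k},\Omega^{j}_{X_{k}/k})$ afforded by ordinariness. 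Granting (i) and (ii), Lemma \ref{(iii)} yields (iii), so $\varsigma$ is an isomorphism and in particular $\alpha_{j}:H^{i}(\mathcal{X},\Omega^{j}_{\mathcal{X}/W(R)})\xrightarrow{\sim}H^{i}(X,W\Omega^{j}_{X/R})$.

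Finally I would assemble the decomposition. As in the abelian case, $\alpha_{j}$ factors as $H^{i}(\mathcal{X},\Omega^{j}_{\mathcal{X}/W(R)})\to\mathbb{H}^{i}(\mathcal{X},\Omega^{\geq j}_{\mathcal{X}/W(R)})\cong\mathbb{H}^{i}(X,W\Omega^{\geq j}_{X/R})\to H^{i}(X,W\Omega^{j}_{X/R})$ and hence splits the edge surjection $\mathbb{H}^{i}(X,W\Omega^{\geq j}_{X/R})\twoheadrightarrow H^{i}(X,W\Omega^{j}_{X/R})$ of the relative Hodge--Witt spectral sequence \eqref{ss}. The existence of such splittings for every $j$ forces \eqref{ss} to degenerate at $E_{1}$ and yields the direct sum decomposition $H^{s}_{\mathrm{cris}}(X/W(R))=\bigoplus_{i+j=s}H^{i}(X,W\Omega^{j}_{X/R})$, in which each summand carries, by (ii)--(iii), the $(-j)$-fold Tate twist of a multiplicative display and on which the crystalline Frobenius restricts to $p^{j}F$. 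Since $j\leq n<p$ throughout, the filtration levels remain in the range where the comparison of Theorem \ref{filtered comparison} and the display structure of Section 2 apply, so the conclusion holds in all degrees $0\leq s\leq 2n$.
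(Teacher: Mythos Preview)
Your overall architecture matches the paper: reduce to Proposition~\ref{HW} for each $(i,j)$, prove part~(i) by the same induction on $j$ via the comparison of $N^{r+1}W\Omega^{\geq\bullet}$ with $\mathcal{F}^{r+1}\Omega^{\geq\bullet}$ and diagram~\eqref{comm square 2}, invoke Lemma~\ref{(iii)} for (iii), and assemble the splitting via the $\alpha_j$. The liftings $X_n/W_n(R)$ and conditions (A1)--(A2) are obtained as you say (the paper does this in Remark~\ref{assumptions} via the canonical Teichm\"uller map $A\to W(A)\to W_n(R)$ rather than the lifting criterion, but either works).

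The genuine gap is in your treatment of part~(ii). You propose to take $\mathcal{P}$ as ``the lift of the slope-$j$ graded piece to $R$'' using rigidity of multiplicative displays, with $\varsigma$ supplied by ``the slope filtration on the crystalline display''. But rigidity only furnishes an abstract multiplicative display over $R$ lifting the one over $k$; it does not produce a sub-display of $H^s_{\mathrm{cris}}(X/W(R))$, nor a Frobenius-equivariant map from $\mathcal{P}$ into it. You are tacitly assuming that a slope filtration with multiplicative graded pieces exists on the display over $W(R)$, which is precisely the point requiring proof. The paper fills this by invoking Deligne's theory of ordinary Hodge $F$-crystals \cite{Del81a}: over the versal base $A=W(k)\llbracket T_1,\dots,T_r\rrbracket$ the $F$-crystal $H^s_{\mathrm{cris}}(\mathcal{X}_0/A)$ is ordinary in Deligne's sense (Newton $=$ Hodge at the closed point), so \cite[Prop.~1.3.2]{Del81a} supplies a canonical conjugate filtration $U_\bullet$ by sub-$F$-crystals with $U_i/U_{i-1}$ a $(-i)$-twisted unit crystal, and \cite[Prop.~1.3.6]{Del81a} shows $U_\bullet$ is opposite to the Hodge filtration, whence $U_i/U_{i-1}\cong\mathrm{gr}^iH\cong H^{s-i}(\mathfrak{X},\Omega^i_{\mathfrak{X}/W(R)})$ after evaluating at $W(A)$ and base-changing to $W(R)$. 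This simultaneously gives the multiplicative display $P_i={U_i}_{W(R)}/{U_{i-1}}_{W(R)}$, its realisation as a direct summand of $H^s_{\mathrm{cris}}(X/W(R))$, and the map $\varsigma$ into $H^{s-i}(X,W\Omega^i_{X/R})$.

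There is a second subtlety you do not address: for $j\geq 1$ the Frobenius on $H^{s-j}(X,W\Omega^j_{X/R})$ relevant to the multiplicative structure is the de Rham--Witt $F$, not the crystalline $p^jF$, so one must check that the untwisted square with $\varsigma$ commutes. Over $R$ one cannot simply divide by $p^j$; the paper handles this by first verifying commutativity over the versal base $A$, where $p$ acts injectively, and then base-changing (see the argument for $\eta$ in the surface case and the $P_1$ diagram in Section~6). Your proposal should route through the versal family for the same reason.
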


\begin{rem}\label{assumptions}
The standard assumptions (B1) and (B2) ensure that the relative Hodge filtration of the family is a filtration by locally direct factors, and commutes with arbitrary base change $\mathcal{S}\rightarrow\mathcal{T}$ \cite[Corollary 8.3]{Kat70}. Notice that (B1) and (B2) ensures that the deformation $X/\mathrm{Spec}\,R$ satisfies (A1) and (A2). Indeed, by completing at a closed point, we may assume that $\mathcal{S}=\mathrm{Spf}\,A$ where $A=W(k)\llbracket T_{1},\ldots,T_{r}\rrbracket$. There is a canonical map $A\rightarrow W(A)$ mapping the $T_{i}$ to their Teichm\"{u}ller representatives in $W(A)$. The homomorphism $A\rightarrow R$ induces a map $W(A)\rightarrow W(R)$, hence $A\rightarrow R$ factors through the composition $A\rightarrow W(A)\rightarrow W(R)\rightarrow W_{n}(R)$ for each $n\in\mathbb{N}$. Then the $X_{n}:=\mathcal{X}\times_{\mathcal{S}}\mathrm{Spec}\,W_{n}(R)$ form a compatible system of smooth liftings of $X=\mathcal{X}\times_{\mathcal{S}}\mathrm{Spec}\,R$ which satisfies (A1) and (A2) by base change.
\end{rem}

\begin{rem}
Since our techniques are crystalline in nature, the smoothness of $\mathcal{S}$ seems to be indispensable in our approach. It would be interesting to understand relative Hodge-Witt decompositions for varieties with obstructed deformations. 
\end{rem}

Before giving the proof of Theorem \ref{deformation}, we shall recall the theory of ordinary Hodge $F$-crystals from \cite{Del81a}. Let $A=W(k)\llbracket T_{1},\ldots, T_{r}\rrbracket$ and $A_{0}=k\llbracket T_{1},\ldots, T_{r}\rrbracket$, for some $r\geq 0$. A \emph{crystal over $A_{0}$} is a finitely generated free $A$-module $H$ together with an integrable and topologically nilpotent connection
\begin{equation*}
\nabla\,:\,H\rightarrow H\otimes_{A}\Omega_{A/W(k)}^{1}\,.
\end{equation*}

A ring endomorphism $\phi:A\rightarrow A$ which restricts to the Frobenius on $W(k)$ is called a \emph{lift of Frobenius} if it reduces modulo $p$ to the Frobenius endomorphism $\sigma:A_{0}\rightarrow A_{0}$ which sends $x$ to $x^{p}$. A crystal $(H,\nabla)$ over $A_{0}$ is called an \emph{$F$-crystal over $A_{0}$} if for every lift of Frobenius $\phi:A\rightarrow A$, there is a given $A$-module homomorphism $F(\phi):\phi^{\ast}H\rightarrow H$ which is horizontal for $\nabla$, i.e. the square 
\begin{equation*}
\begin{tikzpicture}[descr/.style={fill=white,inner sep=1.5pt}]
        \matrix (m) [
            matrix of math nodes,
            row sep=2.5em,
            column sep=2.5em,
            text height=1.5ex, text depth=0.25ex
        ]
        { \phi^{\ast}H & \phi^{\ast}H\otimes_{A}\Omega_{A/W(k)}^{1}  \\
       H  & H\otimes_{A}\Omega_{A/W(k)}^{1} \\};

        \path[overlay,->, font=\scriptsize] 
        (m-1-1) edge node [above]{$\phi^{\ast}\nabla$} (m-1-2)
        (m-1-1) edge node [left] {$F(\phi)$}(m-2-1)
        (m-1-2) edge node [right] {$F(\phi)\otimes\mathrm{id}$} (m-2-2)
        (m-2-1) edge node [above]{$\nabla$} (m-2-2)
        ;        
                                        
\end{tikzpicture}
\end{equation*}
commutes, and such that $F(\phi)\otimes\mathbb{Q}_{p}$ is an isomorphism.  For any two liftings of Frobenius $\phi,\psi:A\rightarrow A$, we also require that $F(\psi)\circ\chi(\phi,\psi)=F(\phi)$, where $\chi(\phi,\psi):\phi^{\ast}H\xrightarrow{\sim}\psi^{\ast}H$ is the usual isomorphism of $A$-modules coming from parallel transport with respect to the connection $\nabla$:
\begin{align*}
& \chi(\phi,\psi):\phi^{\ast}H\rightarrow\psi^{\ast}H \\
& x\mapsto\sum_{m_{1},\ldots,m_{r}\geq 0}\prod_{j=1}^{r}\frac{p^{m_{j}}}{m_{j}}\left(\frac{\phi(T_{j})-\psi(T_{j})}{p}\right)^{m_{j}}\psi^{\ast}(D_{1}^{m_{1}}\circ\cdots\circ D_{n}^{m_{n}}(x)).
\end{align*}
Here the operator $D_{j}:H\rightarrow H$ denotes $(\frac{d}{dT_{j}}\otimes  1)\circ\nabla$. An $F$-crystal $(H,\nabla,F)$ is said to be a \emph{unit $F$-crystal} if $F(\phi)$ is an isomorphism for some (hence any) lift of Frobenius $\phi:A\rightarrow A$.

Let $H$ be an $F$-crystal over $A$ (we henceforth drop the $\nabla$ and $F$ from the notation), and write $H_{0}:=H\otimes_{A}A_{0}$. Given a lift of Frobenius $\phi:H\rightarrow H$, define a decreasing filtration $\mathrm{Fil}^{\bullet}H_{0}$ and an increasing filtration $\mathrm{Fil}^{\bullet}H_{0}$ of $H_{0}$ by $A_{0}$-submodules as follows:
\begin{align*}
& \mathrm{Fil}^{i}H_{0}:=\{x\in H_{0} \,:\, \exists y\in H\text{ with }y\equiv x\,\mathrm{mod}\,p\text{ and }F(\phi)\phi^{\ast}y\in p^{i}H\} \\
& \mathrm{Fil}_{i}H_{0}:=\{x\in H_{0} \,:\, \exists y\in H\text{ with }y\equiv x\,\mathrm{mod}\,p\text{ and }p^{i}y\in\mathrm{im}\,F(\phi)\}\,.
\end{align*} 
These are the \emph{Hodge} and \emph{conjugate filtrations} of $H_{0}$, respectively. It is clear that they are finite, separated and exhaustive, and that they are independent of the choice of lift of Frobenius $\phi$ \cite[\S1.3]{Del81a}. Let $\nabla_{0}:=\nabla\mod p$ be the connection on $H_{0}$ induced by $\nabla$. Then the Hodge filtration satisfies Griffiths transversality
\begin{equation*}
\nabla_{0}\mathrm{Fil}^{i}H_{0}\subset\mathrm{Fil}^{i-1}H_{0}\otimes_{A_{0}}\Omega_{A_{0}/k}^{1}
\end{equation*}
and the conjugate filtration is horizontal for $\nabla_{0}$
\begin{equation*}
\nabla_{0}\mathrm{Fil}_{i}H_{0}\subset\mathrm{Fil}_{i}H_{0}\otimes_{A_{0}}\Omega_{A_{0}/k}^{1}\,.
\end{equation*}
An $F$-crystal $H$ over $A_{0}$ is called \emph{ordinary} if the graded $A_{0}$-module $\mathrm{gr}^{\bullet}H_{0}$ associated to the Hodge filtration (equivalently the graded $A_{0}$-module $\mathrm{gr}_{\bullet}H_{0}$ associated to the conjugate filtration) is free, and the Hodge and conjugate filtrations are opposite, that is if 
\begin{equation*}
H_{0}=\mathrm{Fil}_{i}H_{0}\oplus\mathrm{Fil}^{i+1}H_{0}
\end{equation*}
for every $i$. It is shown in \cite[Prop. 1.3.2]{Del81a} that $H$ is ordinary if and only there exists a (unique) increasing filtration $U_{\bullet}$ (the \emph{conjugate filtration}) of $H$ by sub-$F$-crystals such that
\begin{equation*}
U_{i}\otimes_{A}A_{0}=\mathrm{Fil}_{i}H_{0}
\end{equation*}
for every $i$, and such that
\begin{equation*}
\mathrm{gr}_{i}H:=U_{i}/U_{i-1}\simeq V_{i}(-i)
\end{equation*}
is the $(-i)$-fold Tate twist of a unit $F$-crystal.

A \emph{Hodge $F$-crystal over $A_{0}$} is an $F$-crystal $H$ over $A_{0}$ together with a finite decreasing filtration $\mathrm{Fil}^{\bullet}H$ (the \emph{Hodge filtration}) of $H$ by free $A$-submodules which lifts the Hodge filtration on $H_{0}$, and satisfies Griffiths tranvsersality:
\begin{align*}
&\mathrm{Fil}^{i}H\otimes_{A}A_{0}=\mathrm{Fil}^{i}H_{0}\,,
&\nabla\mathrm{Fil}^{i}H\subset\mathrm{Fil}^{i-1}H\otimes_{A}\Omega_{A/W(k)}^{1}\,.
\end{align*}
A Hodge $F$-crystal $(H,\mathrm{Fil}^{\bullet}H)$ is said to be \emph{ordinary} if its underlying $F$-crystal $H$ is ordinary. It is shown in \cite[Prop. 1.3.6]{Del81a} that the conjugate and Hodge filtrations of an ordinary Hodge $F$-crystal $(H,\mathrm{Fil}^{\bullet}H)$ are opposite, that is
\begin{equation*}
H=U_{i}\oplus\mathrm{Fil}^{i+1}H
\end{equation*}
for every $i$, and one has a Hodge decomposition \cite[(1.3.6.1)]{Del81a}
\begin{equation*}
H=\bigoplus_{i}H^{i}
\end{equation*}
with $H^{i}=U_{i}\cap\mathrm{Fil}^{i}H$.
\begin{rem}\label{inclusion iso}
The induced inclusions $H^{i}\subset U_{i}/U_{i-1}$ and $H^{i}\subset\mathrm{Fil}^{i}H/\mathrm{Fil}^{i+1}H$ must then be isomorphisms, so
\begin{equation*}
H^{i}\cong U_{i}/U_{i-1}\cong\mathrm{gr}^{i}H\,.
\end{equation*} 
\end{rem}

We now prove Theorem \ref{deformation}:
\begin{proof}
After possibly taking the formal completion of $\mathcal{S}$ at a closed point, we may and do assume that $\mathcal{S}$ is $\mathcal{S}=\mathrm{Spf}\,A$ where $A=W(k)\llbracket T_{1},\ldots, T_{r}\rrbracket$. Let $\mathcal{X}_{0}:=\mathcal{X}\times_{A}A_{0}$.

The Gauss-Manin connection of the family $f:\mathcal{X}\rightarrow\mathcal{S}$ 
\begin{equation*}
\nabla:H_{\mathrm{dR}}^{s}(\mathcal{X}/A)\rightarrow H_{\mathrm{dR}}^{s}(\mathcal{X}/A)\otimes_{A}\Omega_{A/W(k)}^{1}
\end{equation*}
gives the crystalline cohomology $H_{\mathrm{cris}}^{s}(\mathcal{X}_{0}/A)\simeq H_{\mathrm{dR}}^{s}(\mathcal{X}/A)$, together with its crystalline Frobenius, the structure of an $F$-crystal over $A_{0}$. The Hodge filtration $\mathrm{Fil}^{\bullet}H_{\mathrm{dR}}^{s}(\mathcal{X}/A)$ satisfies Griffiths transversality and thus the pair $(H_{\mathrm{dR}}^{s}(\mathcal{X}/A),\mathrm{Fil}^{\bullet}H_{\mathrm{dR}}^{s}(\mathcal{X}/A))$ is a Hodge $F$-crystal over $A_{0}$. Let $e_{0}:A_{0}\rightarrow k$ denote the augmentation map. Since the closed fibre $X_{k}$ is ordinary, the Newton and Hodge polygons of $e_{0}^{\ast}H_{\mathrm{dR}}^{s}(\mathcal{X}/A))\simeq H_{\mathrm{cris}}^{s}(X_{k}/W(k))$ coincide. Therefore $(H_{\mathrm{dR}}^{s}(\mathcal{X}/A),\mathrm{Fil}^{\bullet}H_{\mathrm{dR}}^{s}(\mathcal{X}/A))$ is an ordinary Hodge $F$-crystal by \cite[Prop. 1.3.2]{Del81a}, and hence we have the conjugate filtration $U_{\bullet}$ of $H_{\mathrm{cris}}^{s}(\mathcal{X}_{0}/A)$ lifting the conjugate filtration $\mathrm{Fil}_{\bullet}H_{\mathrm{cris}}^{s}(\mathcal{X}_{0}/A_{0})$, and such that $U_{i}/U_{i-1}\simeq V_{i}(-i)$ is the $(-i)$-fold Tate twist of a unit $F$-crystal. Moreover, the filtration $U_{\bullet}$ is opposite to the Hodge filtration $\mathrm{Fil}^{\bullet}H_{\mathrm{dR}}^{s}(\mathcal{X}/A)$.

Now we evaluate this filtration of $F$-crystals on $W(A)$ and get a filtration of $W(A)$-modules 
\begin{equation*}
0\subset {U_{0}}_{W(A)}\subset {U_{1}}_{W(A)}\subset\ldots\subset {U_{n}}_{W(A)}=H_{\rm{cris}}^{s}(\mathcal{X}_{0}/W(A))\,.
\end{equation*}
Note that the Frobenius on $W(A)$ is a lifting of the Frobenius on $A_{0}=W(A)/\langle\tensor[^V]{W(A)}{},p\rangle$. Then ${U_{i}}_{W(A)}/{U_{i-1}}_{W(A)}$ is a free $W(A)$-module such that the crystalline Frobenius induces the $(-i)$-fold Tate twist of the multiplicative (unit-) crystal evaluated at $W(A)$ on which $F$ acts as an $F$-linear isomorphism. If $U$ is a unit $F$-crystal over $A_{0}$ and we evaluate it at $W(A)$, then $F:W(A)\otimes_{\sigma,W(A)}U_{W(A)}\rightarrow U_{W(A)}$ is an isomorphism. By definition,
\begin{equation*}
(P_{0}=U_{W(A)},P_{1}=\tensor[^V]{W(A)}{}P_{0},F,F_{1}:P_{1}\rightarrow P_{0})
\end{equation*}
with $F_{1}:\tensor[^V]{\xi}{}\alpha\mapsto\xi F\alpha$ is then a multiplicative display. If $U$ is the Tate-twist of a unit $F$-crystal then $U_{W(A)}$ is the Tate-twist of a multiplicative display. We can now take the base change of $F$-crystals, respectively of displays, with respect to the map $A\rightarrow R$ to get a filtration
\begin{equation*}
0\subset {U_{0}}_{W(R)}\subset {U_{1}}_{W(R)}\subset\ldots\subset {U_{n}}_{W(R)}=H_{\rm{cris}}^{s}(X/W(R))
\end{equation*}
of $F$-crystals evaluated at $W(R)$, such that the successive quotients are Tate-twists of multiplicative displays.

We already know that (by Remark \ref{inclusion iso})
\begin{equation*}
{U_{0}}_{W(R)}\simeq H^{s}(\mathfrak{X},\mathcal{O}_{\mathfrak{X}})
\end{equation*}
and we claim that the composite map 
\begin{equation*}
\varsigma:{U_{0}}_{W(R)}\rightarrow H_{\mathrm{cris}}^{s}(X/W(R))\rightarrow H^{s}(X,W\mathcal{O}_{X})
\end{equation*}
is an isomorphism. Note that the diagram 
\begin{equation*}
\begin{tikzpicture}[descr/.style={fill=white,inner sep=1.5pt}]
        \matrix (m) [
            matrix of math nodes,
            row sep=4em,
            column sep=2em,
            text height=1.5ex, text depth=0.25ex
        ]
        { {U_{0}}_{W(R)} & H^{s}(X,W\mathcal{O}_{X})  \\
       {U_{0}}_{W(R)} & H^{s}(X,W\mathcal{O}_{X}) \\};

        \path[overlay,->, font=\scriptsize] 
        (m-1-1) edge node [above]{$\varsigma$} (m-1-2)
        (m-2-1) edge node [above]{$\varsigma$} (m-2-2)
        (m-1-1) edge node [left]{$F$} (m-2-1)
        (m-1-2) edge node [right]{$F$} (m-2-2)
        ;
                                
\end{tikzpicture}
\end{equation*}
commutes by construction. To see the claim, let $\mathfrak{X}$ denote the ind-scheme over $\mathrm{Spec}\,W_{\bullet}(R)$ arising from the compatible family of liftings $X_{n}/\mathrm{Spec}\,W_{n}(R)$ constructed using the canonical map $A\rightarrow W(A)\rightarrow W_{n}(R)$ as in Remark \ref{assumptions}. Then for each $i$ we have maps
\begin{equation*}
H^{i}_{\mathrm{cris}}(X/W(R))\cong H_{\mathrm{dR}}^{i}(\mathfrak{X}/W(R))\twoheadrightarrow H^{i}(\mathfrak{X},\mathcal{O}_{\mathfrak{X}})\twoheadrightarrow H^{i}(X,\mathcal{O}_{X})
\end{equation*}  
where the final arrow is surjective because de Rham cohomology and Hodge cohomology commute with base change. The composition factors through the map $H^{i}(X,W\mathcal{O}_{X})\rightarrow H^{i}(X,\mathcal{O}_{X})$ induced by $W\mathcal{O}_{X}\rightarrow\mathcal{O}_{X}$. Hence the cohomology sequence coming from the short exact sequence 
\begin{equation*}
0\rightarrow W\mathcal{O}_{X}\xrightarrow{V}W\mathcal{O}_{X}\rightarrow\mathcal{O}_{X}\rightarrow 0
\end{equation*}
splits into short exact sequences
\begin{equation*}
0\rightarrow H^{i}(X,W\mathcal{O}_{X})\xrightarrow{V}H^{i}(X,W\mathcal{O}_{X})\rightarrow H^{i}(X,\mathcal{O}_{X})\rightarrow 0
\end{equation*}
for each $i$. In particular, we see that the map $\varsigma$ reduces modulo $I_{R}$ to the map
\begin{equation*}
\overline{\varsigma}:{U_{0}}_{W(R)}/I_{R}{U_{0}}_{W(R)}\rightarrow H^{s}(X,W\mathcal{O}_{X})/\mathrm{im}\,V\simeq H^{s}(X,\mathcal{O}_{X})\,.
\end{equation*}
As in the previous sections, we see that $\overline{\varsigma}$ is an isomorphism by reducing to the case $R=k$, where $H^{s}(X_{k},W\mathcal{O}_{X_{k}})$ is the slope $0$ part in $H^{s}_{\mathrm{cris}}(X_{k}/W(k))$. By Lemma \ref{(iii)} we conclude that $\varsigma$ is an isomorphism, hence $\zeta$ imposes a multiplicative display structure on $H^{s}(X,W\mathcal{O}_{X})$. It is clear that under the isomorphism $U_{0}\cong H^{s}(\mathfrak{X},\mathcal{O}_{\mathfrak{X}})$, the composite map
\begin{equation*}
H^{s}(\mathfrak{X},\mathcal{O}_{\mathfrak{X}})\rightarrow H^{s}_{\mathrm{dR}}(\mathfrak{X}/W(R))\simeq H_{\mathrm{cris}}^{s}(X/W(R))\rightarrow H^{s}(X,W\mathcal{O}_{X})
\end{equation*}
agrees with the map $\alpha_{s}$.

Since ${U_{0}}_{W(R)}$ is a direct summand of $H_{\rm{cris}}^{s}(X/W(R))$ we conclude that ${U_{1}}_{W(R)}/{U_{0}}_{W(R)}$ is a direct summand too. Since $H^{s}(X,W\mathcal{O}_{X})$ is a direct summand of $H^{s}_{\mathrm{cris}}(X/W(R))$, we get an induced map
\begin{equation*}
\begin{adjustbox}{width=12.5cm}
$\varsigma:P_{1}={U_{1}}_{W(R)}/{U_{0}}_{W(R)}\rightarrow\mathbb{H}^{s}(X,0\rightarrow W\Omega_{X/R}^{1}\xrightarrow{d}\cdots\xrightarrow{d}W\Omega_{X/R}^{s})\rightarrow H^{s-1}(X,W\Omega_{X/R}^{1})\,.$
\end{adjustbox}
\end{equation*}
The diagram
\begin{equation*}
\begin{tikzpicture}[descr/.style={fill=white,inner sep=1.5pt}]
        \matrix (m) [
            matrix of math nodes,
            row sep=4em,
            column sep=2em,
            text height=1.5ex, text depth=0.25ex
        ]
        { P_{1} & H^{s-1}(X,W\Omega_{X/R}^{1})  \\
       P_{1} & H^{s-1}(X,W\Omega_{X/R}^{1}) \\};

        \path[overlay,->, font=\scriptsize] 
        (m-1-1) edge node [above]{$\varsigma$} (m-1-2)
        (m-2-1) edge node [above]{$\varsigma$} (m-2-2)
        (m-1-1) edge node [left]{$F$} (m-2-1)
        (m-1-2) edge node [right]{$F$} (m-2-2)
        ;
                                
\end{tikzpicture}
\end{equation*}
is commutative because it already commutes when $X$ is replaced with the versal deformation $\mathcal{X}$. The $F$ on the left, which is the untwist of the crystalline Frobenius, is an $F$-linear isomorphism because it is defined on a multiplicative (unit-) display.

We claim that we have exact sequences 
\begin{equation*}
0\rightarrow H^{i}(X,W\Omega_{X/R}^{1})\xrightarrow{V}H^{i}(X,W\Omega_{X/R}^{1})\rightarrow H^{i}(X,\Omega_{X/R}^{1})\rightarrow 0
\end{equation*}
induced by the action of $V$ on $W\Omega_{X/R}^{1}$, for all $i$. Indeed, consider the commutative diagram
\begin{equation*}
\begin{tikzpicture}[descr/.style={fill=white,inner sep=1.5pt}]
        \matrix (m) [
            matrix of math nodes,
            row sep=4em,
            column sep=1.8em,
            text height=1.5ex, text depth=0.25ex
        ]
        { 0 & \mathbb{H}^{i}(X,N^{2}W\Omega_{X/R}^{\geq 1}) & \mathbb{H}^{i+1}(X,N^{2}W\Omega_{X/R}^{\bullet}) &  H^{i+1}(X,W\mathcal{O}_{X}) & 0 \\
       \ & \ & \mathbb{H}^{i+1}(\mathfrak{X},\mathcal{F}^{2}\Omega_{\mathfrak{X}/W(R)}^{\bullet}) &  I_{R}H^{i+1}(\mathfrak{X},\mathcal{O}_{\mathfrak{X}}) & \ \\};

        \path[overlay,->, font=\scriptsize] 
        (m-1-1) edge (m-1-2)
        (m-1-2) edge (m-1-3)
        (m-1-3) edge (m-1-4)
        (m-1-4) edge (m-1-5)
        (m-2-3) edge (m-2-4)
        (m-2-3) edge node[right]{$\cong$} (m-1-3)
        (m-2-4) edge node[left]{$\cong$} node[right]{$\tensor[^F]{\alpha}{_0}$} (m-1-4);
                                
\end{tikzpicture}
\end{equation*}
The last map is surjective because the $E_{1}$-hypercohomology spectral sequence associated to $\mathcal{F}^{2}\Omega_{\mathfrak{X}/W(R)}^{\bullet}$ degenerates. Applying the same argument in cohomological degree $i$, we see that the first map is injective. Therefore
\begin{equation*}
\mathbb{H}^{i}(X,N^{2}W\Omega_{X/R}^{\geq 1})\cong \mathbb{H}^{i}(\mathfrak{X},\mathcal{F}^{2}\Omega_{\mathfrak{X}/W(R)}^{\geq 1})\,.
\end{equation*}
Hence the composite map
\begin{equation*}
\mathbb{H}^{i}(X,N^{2}W\Omega_{X/R}^{\geq 1})\xrightarrow{(V,\mathrm{id})}\mathbb{H}^{i}(X,W\Omega_{X/R}^{\geq 1})\rightarrow\mathbb{H}^{i}(X,W\Omega_{X/R}^{\bullet})
\end{equation*}
can be identified under the comparison between de Rham-Witt and de Rham cohomology with the composite map (see \eqref{comm square 2})
\begin{equation*}
\mathbb{H}^{i}(\mathfrak{X},\mathcal{F}^{2}W\Omega_{\mathfrak{X}/W(R)}^{\geq 1})\rightarrow\mathbb{H}^{i}(\mathfrak{X},\Omega_{\mathfrak{X}/W(R)}^{\geq 1})\rightarrow H^{i}_{\mathrm{dR}}(\mathfrak{X}/W(R))
\end{equation*}
which is injective due to the degeneracy of the $E_{1}$-hypercohomology spectral sequences. We conclude that the map
\begin{equation*}
\mathbb{H}^{i}(X,N^{2}W\Omega_{X/R}^{\geq 1})\xrightarrow{(V,\mathrm{id})}\mathbb{H}^{i}(X,W\Omega_{X/R}^{\geq 1})
\end{equation*}
is injective and has cokernel $H^{i}(X,\Omega_{X/R}^{1})$. Now consider the diagram
\begin{equation*}
\begin{tikzpicture}[descr/.style={fill=white,inner sep=1.5pt}]
        \matrix (m) [
            matrix of math nodes,
            row sep=4em,
            column sep=2em,
            text height=1.5ex, text depth=0.25ex
        ]
        { \mathbb{H}^{i}(W\Omega^{\geq 2}) & \mathbb{H}^{i}(W\Omega^{\geq 1}) & H^{i}(W\Omega^{1}) & \mathbb{H}^{i+1}(W\Omega^{\geq 2}) & \ \\
      \mathbb{H}^{i}(W\Omega^{\geq 2}) & \mathbb{H}^{i}(N^{2}W\Omega^{\geq 1}) & H^{i}(W\Omega^{1}) & \mathbb{H}^{i+1}(W\Omega^{\geq 2}) & \  \\
       };

        \path[overlay,->, font=\scriptsize] 
        (m-2-1) edge node [left]{$=$} node [right]{$\mathrm{id}$} (m-1-1)
        (m-2-3) edge node [right] {$V$} (m-1-3)
        (m-2-2) edge node[right]{$(V,\mathrm{id})$} (m-1-2)
        (m-1-2) edge (m-1-3)
        (m-2-2) edge (m-2-3)
        (m-1-3) edge (m-1-4)
        (m-2-3) edge (m-2-4)
        (m-2-4) edge node [right]{$\mathrm{id}$} node [left]{$=$} (m-1-4)
        (m-1-4) edge (m-1-5)
        (m-2-4) edge (m-2-5)
        (m-1-1) edge (m-1-2)
        (m-2-1) edge (m-2-2);
                        
\end{tikzpicture}
\end{equation*}
The injectivity of $(V,\mathrm{id})$ in cohomological degrees $i$ and $i+1$ implies that $V$ is injective on $H^{i}(X,W\Omega_{X/R}^{1})$ and has cokernel $H^{i}(X,\Omega_{X/R}^{1})$ as desired.

As before, we see that the induced map 
\begin{equation*}
\overline{\varsigma}:P_{1}/I_{R}P_{1}\rightarrow H^{s-1}(X,W\Omega_{X/R}^{1})/\mathrm{im}\, V\simeq H^{s-1}(X,\Omega_{X/R}^{1})
\end{equation*}
is an isomorphism by reducing to the case $R=k$, where $H^{s-1}(X_{k},W\Omega_{X_{k}/k}^{1})$ is the slope $1$ part in $H_{\mathrm{cris}}^{s}(X_{k}/W(k))$. By Lemma \ref{(iii)} we conclude that $\varsigma$ is an isomorphism and that $P_{1}\cong H^{s-1}(\mathfrak{X},\Omega_{\mathfrak{X}/W(R)}^{1})$ (see Remark \ref{inclusion iso}). It is clear that the map $\zeta$ agrees with the map
\begin{equation*}
\alpha_1:H^{s-1}(\mathfrak{X},\Omega_{\mathfrak{X}/W(R)}^{1})\rightarrow H^{s-1}(X,W\Omega_{X/R}
^{1})
\end{equation*} 
and 
\begin{align*}
\tensor[^F]{\alpha}{_1}: \ 
& I_{R}H^{s-1}(\mathfrak{X},\Omega_{\mathfrak{X}/W(R)}^{1})\rightarrow H^{s-1}(X,W\Omega_{X/R}
^{1}) \\
& \ \ \ \ \ \ \ z\mapsto F(\alpha_{1}(z))
\end{align*}
is a bijection.

Now consider the composite map 
\begin{equation*}
\begin{adjustbox}{width=12.5cm}
$\varsigma:P_{2}={U_{2}}_{W(R)}/{U_{1}}_{W(R)}\rightarrow\mathbb{H}^{s}(X,0\rightarrow 0\rightarrow W\Omega_{X/R}^{2}\xrightarrow{d}\cdots\xrightarrow{d}W\Omega_{X/R}^{s})\rightarrow H^{s-2}(X,W\Omega_{X/R}^{2})\,.$
\end{adjustbox}
\end{equation*}
The same argument as before shows that the square
\begin{equation*}
\begin{tikzpicture}[descr/.style={fill=white,inner sep=1.5pt}]
        \matrix (m) [
            matrix of math nodes,
            row sep=4em,
            column sep=2em,
            text height=1.5ex, text depth=0.25ex
        ]
        { P_{2} & H^{s-2}(X,W\Omega_{X/R}^{2})  \\
       P_{2} & H^{s-2}(X,W\Omega_{X/R}^{2}) \\};

        \path[overlay,->, font=\scriptsize] 
        (m-1-1) edge node [above]{$\varsigma$} (m-1-2)
        (m-2-1) edge node [above]{$\varsigma$} (m-2-2)
        (m-1-1) edge node [left]{$F$} (m-2-1)
        (m-1-2) edge node [right]{$F$} (m-2-2)
        ;
                                
\end{tikzpicture}
\end{equation*}
is commutative, where $F$ on the left is again the untwist of the crystalline Frobenius.

We claim that we have exact sequences 
\begin{equation*}
0\rightarrow H^{i}(X,W\Omega_{X/R}^{2})\xrightarrow{V}H^{i}(X,W\Omega_{X/R}^{2})\rightarrow H^{i}(X,\Omega_{X/R}^{2})\rightarrow 0
\end{equation*}
induced by the action of $V$ on $W\Omega_{X/R}^{2}$, for all $i$. We proceed in a similar manner as before: Consider the commutative diagram
\begin{equation*}
\begin{tikzpicture}[descr/.style={fill=white,inner sep=1.5pt}]
        \matrix (m) [
            matrix of math nodes,
            row sep=4em,
            column sep=1.8em,
            text height=1.5ex, text depth=0.25ex
        ]
        { 0 & \mathbb{H}^{i}(X,N^{3}W\Omega_{X/R}^{\geq 1}) & \mathbb{H}^{i+1}(X,N^{3}W\Omega_{X/R}^{\bullet}) &  H^{i+1}(X,W\mathcal{O}_{X}) & 0 \\
       \ & \ & \mathbb{H}^{i+1}(\mathfrak{X},\mathcal{F}^{3}\Omega_{\mathfrak{X}/W(R)}^{\bullet}) &  I_{R}H^{i+1}(\mathfrak{X},\mathcal{O}_{\mathfrak{X}}) & \ \\};

        \path[overlay,->, font=\scriptsize] 
        (m-1-1) edge (m-1-2)
        (m-1-2) edge (m-1-3)
        (m-1-3) edge (m-1-4)
        (m-1-4) edge (m-1-5)
        (m-2-3) edge (m-2-4)
        (m-2-3) edge node[right]{$\cong$} (m-1-3)
        (m-2-4) edge node[left]{$\cong$} node[right]{$\tensor[^F]{\alpha}{_0}$} (m-1-4);
                                
\end{tikzpicture}
\end{equation*}
The last map is surjective because the $E_{1}$-hypercohomology spectral sequence associated to $\mathcal{F}^{3}\Omega_{\mathfrak{X}/W(R)}^{\bullet}$ degenerates, and the same argument in cohomological degree $i$, shows the first map is injective. Therefore
\begin{equation*}
\mathbb{H}^{i}(X,N^{3}W\Omega_{X/R}^{\geq 1})\cong \mathbb{H}^{i}(\mathfrak{X},\mathcal{F}^{3}\Omega_{\mathfrak{X}/W(R)}^{\geq 1})
\end{equation*}
for each $i$.

Considering the commutative diagram 
\begin{equation*}
\begin{tikzpicture}[descr/.style={fill=white,inner sep=1.5pt}]
        \matrix (m) [
            matrix of math nodes,
            row sep=4em,
            column sep=1.8em,
            text height=1.5ex, text depth=0.25ex
        ]
        { 0 & \mathbb{H}^{i}(X,N^{3}W\Omega_{X/R}^{\geq 2}) & \mathbb{H}^{i+1}(X,N^{3}W\Omega_{X/R}^{\geq 1}) &  H^{i+1}(X,W\Omega_{X/R}^{1}) & 0 \\
       \ & \ & \mathbb{H}^{i+1}(\mathfrak{X},\mathcal{F}^{3}\Omega_{\mathfrak{X}/W(R)}^{\geq 1}) &  I_{R}H^{i+1}(\mathfrak{X},\Omega_{\mathfrak{X}/W(R)}^{1}) & \ \\};

        \path[overlay,->, font=\scriptsize] 
        (m-1-1) edge (m-1-2)
        (m-1-2) edge (m-1-3)
        (m-1-3) edge (m-1-4)
        (m-1-4) edge (m-1-5)
        (m-2-3) edge (m-2-4)
        (m-2-3) edge node[right]{$\cong$} (m-1-3)
        (m-2-4) edge node[left]{$\cong$} node[right]{$\tensor[^F]{\alpha}{_1}$} (m-1-4);
                                
\end{tikzpicture}
\end{equation*}
then shows that
\begin{equation*}
\mathbb{H}^{i}(X,N^{3}W\Omega_{X/R}^{\geq 2})\cong \mathbb{H}^{i}(\mathfrak{X},\mathcal{F}^{3}\Omega_{\mathfrak{X}/W(R)}^{\geq 2})
\end{equation*}
for each $i$, as well. Finally, the diagram
\begin{equation*}
\begin{tikzpicture}[descr/.style={fill=white,inner sep=1.5pt}]
        \matrix (m) [
            matrix of math nodes,
            row sep=4em,
            column sep=2em,
            text height=1.5ex, text depth=0.25ex
        ]
        { \mathbb{H}^{i}(W\Omega^{\geq 3}) & \mathbb{H}^{i}(W\Omega^{\geq 2}) & H^{i}(W\Omega^{2}) & \mathbb{H}^{i+1}(W\Omega^{\geq 3}) & \ \\
      \mathbb{H}^{i}(W\Omega^{\geq 3}) & \mathbb{H}^{i}(N^{3}W\Omega^{\geq 2}) & H^{i}(W\Omega^{2}) & \mathbb{H}^{i+1}(W\Omega^{\geq 3}) & \  \\
       };

        \path[overlay,->, font=\scriptsize] 
        (m-2-1) edge node [left]{$=$} node [right]{$\mathrm{id}$} (m-1-1)
        (m-2-3) edge node [right] {$V$} (m-1-3)
        (m-2-2) edge node[right]{$(V,\mathrm{id})$} (m-1-2)
        (m-1-2) edge (m-1-3)
        (m-2-2) edge (m-2-3)
        (m-1-3) edge (m-1-4)
        (m-2-3) edge (m-2-4)
        (m-2-4) edge node [right]{$\mathrm{id}$} node [left]{$=$} (m-1-4)
        (m-1-4) edge (m-1-5)
        (m-2-4) edge (m-2-5)
        (m-1-1) edge (m-1-2)
        (m-2-1) edge (m-2-2);
                        
\end{tikzpicture}
\end{equation*}
and injectivity of $(V,\mathrm{id})$ in cohomological degrees $i$ and $i+1$ (compare \eqref{comm square 2}) implies that $V$ is injective on $H^{i}(X,W\Omega_{X/R}^{2})$ and has cokernel $H^{i}(X,\Omega_{X/R}^{2})$ as desired.

By the same argument as before, we see that the induced map 
\begin{equation*}
\overline{\varsigma}:P_{2}/I_{R}P_{2}\rightarrow H^{s-2}(X,W\Omega_{X/R}^{2})/\mathrm{im}\, V\simeq H^{s-2}(X,\Omega_{X/R}^{2})
\end{equation*}
is an isomorphism by reducing to the case $R=k$, where $H^{s-2}(X_{k},W\Omega_{X_{k}/k}^{2})$ is the slope $2$ part in $H_{\mathrm{cris}}^{s}(X_{k}/W(k))$. By Lemma \ref{(iii)} we conclude that $\varsigma$ is an isomorphism and that $P_{2}\cong H^{s-2}(\mathfrak{X},\Omega_{\mathfrak{X}/W(R)}^{2})$ (by Remark \ref{inclusion iso}). 

By an induction argument as in the case of abelian schemes one derives an exact sequence for all $i$ and $j$
\begin{equation*}
0\rightarrow H^{i}(X,W\Omega_{X/R}^{j})\xrightarrow{V}H^{i}(X,W\Omega_{X/R}^{j})\rightarrow H^{i}(X,\Omega_{X/R}^{j})\rightarrow 0
\end{equation*} 
and that the map
\begin{equation*}
P_{i}:={U_{i}}_{W(R)}/{U_{i-1}}_{W(R)}\simeq H^{s-i}(\mathfrak{X},\Omega_{\mathfrak{X}}^{i})\xrightarrow{\varsigma} H^{s-i}(X,W\Omega_{X/R}^{i})
\end{equation*}
is an isomorphism, and hence imposes a multiplicative display structure on $H^{s-i}(X,W\Omega_{X/R}^{i})$ and identifies this Hodge-Witt cohomology group as a direct summand in crystalline cohomology. This concludes the proof of Theorem \ref{deformation}.
\end{proof}


\begin{thebibliography}{9}

\bibitem[AZ19]{AZ19}
P. Achinger, M. Zdanowicz, \emph{Serre-Tate theory for Calabi-Yau varieties}, arXiv:1807.11295v2 (2019).

\bibitem[AM77]{AM77} M. Artin, B. Mazur, \emph{Formal groups arising from algebraic varieties}, Ann. Sci. \'{E}cole Norm. Sup. (4) {\bf{10}}, no. 1, (1977), 87--131.

\bibitem[BBM82]{BBM82} P. Berthelot, L. Breen, W. Messing, \emph{Th\'{e}orie de Dieudonn\'{e} cristalline. II.}, Lecture Notes in Math., {\bf{930}}. Springer-Verlag, Berlin, (1982).

\bibitem[BO78]{BO78}
P. Berthelot, A. Ogus, \emph{Notes on crystalline cohomology}, Princeton University Press (1978)

\bibitem[BK86]{BK86}
S. Bloch, K. Kato, \emph{$p$-adic \'{e}tale cohomology}, Inst. Hautes \'{E}tudes Sci. Publ. Math. No. 63 (1986), 107--152.

\bibitem[Del73]{Del73} P. Deligne, \emph{Cohomologie des intersections compl\`{e}tes}, in: SGA7, Groupes de monodromie en geometri\'{e} alg\'{e}brique, Springer LN {\bf{340}}, (1973).

\bibitem[Del81a]{Del81a}
P. Deligne, \emph{Cristaux ordinaires et coordonn\'{e}es canoniques}, Lecture Notes in Math., {\bf{868}}, Algebraic Surfaces, 80-137, Springer (1981).

\bibitem[Del81b]{Del81b}
P. Deligne, \emph{Rel\`{e}vement des surfaces $K3$ en cract\'{e}ristique nulle}, Lecture Notes in Math., {\bf{868}}, Algebraic Surfaces, 58-79, Springer (1981).

\bibitem[ESB05]{ESB05}
T. Ekedahl, N. Shepherd-Barron, \emph{Tangent lifting of deformations in mixed characteristic}, J. Algebra {\bf{291}} (2005), no. 1, 108--128.

\bibitem[vdGK03]{vdGK03}, G. van der Geer, T. Katsura, \emph{On the height of Calabi-Yau varieties in positive characteristic}, Doc. Math. {\bf{8}}, (2003), 97--113. 

\bibitem[G\"{o}t90]{Got90}
L. G\"{o}ttsche, \emph{The Betti numbers of the Hilbert scheme of points on a smooth projective surface}, Math. Ann. {\bf{286}} (1990), no. 1-3, 193--207.

\bibitem[GL21]{GL20}
O. Gregory, A. Langer, \emph{Higher displays arising from filtered de Rham-Witt complexes}, Arithmetic $L$-Functions and Differential Geometric Methods, 121--140, Progr. Math., 338, Birkh\"{a}user/Springer, 2021. 

\bibitem[Har10]{Har10}
R. Hartshorne, \emph{Deformation theory}, Graduate Texts in Mathematics, 257. Springer, New York, 2010.

\bibitem[Hir99]{Hir99}
M. Hirokado, \emph{A non-liftable Calabi-Yau threefold in characteristic 3}, Tohoku Math. J. (2) {\bf{51}} (1999), no. 4, 479--487. 

\bibitem[Ill79]{Ill79}
L. Illusie, \emph{Complexe de de Rham-Witt et cohomologie cristalline}, Ann. Sci. \'{E}cole. Norm. Sup. (4) 12 (1979), 501--661.

\bibitem[Ill90]{Ill90} L. Illusie, \emph{Ordinarit\'{e} des intersections compl\`{e}tes g\'{e}n\'{e}rales}, The Grothendieck Festschrift, Vol. II, 376--405, 
Progr. Math., 87, Birkh\"{a}user Boston, Boston, MA, (1990).

\bibitem[IR83]{IR83}
L. Illusie, M. Raynaud, \emph{Les suites spectrales associ\'{e}es au complexe de de Rham-Witt}, Inst. Hautes \'{E}tudes Sci. Publ. Math. No. 57 (1983), 73--212.

\bibitem[Kat70]{Kat70}
N. Katz, \emph{Nilpotent connections and the monodromy theorem: Applications of a result of Turrittin}, Inst. Hautes \'{E}tudes Sci. Publ. Math. No. 39 (1970), 175--232.

\bibitem[Lan18]{Lan18} A. Langer, \emph{$p$-adic deformation of motivic Chow groups}, Doc. Math. {\bf{23}} (2018), 1863--1894.

\bibitem[LZ04]{LZ04}
A. Langer and T. Zink, \emph{De Rham-Witt cohomology for a proper and smooth morphism}, J. Inst. Math. Jussieu 3, (2004), 231--314. 

\bibitem[LZ07]{LZ07}
A. Langer and T. Zink, \emph{De Rham-Witt cohomology and displays}, Documenta Mathematica {\bf{12}}, (2007), 147--191.

\bibitem[LZ19]{LZ19}
A. Langer, T. Zink, \emph{Grothendieck-Messing deformation theory for varieties of $K3$-type}, Tunis. J. Math. 1, no. 4, (2019), 455--517. 

\bibitem[MM74]{MM74}
B. Mazur, W. Messing, \emph{Universal extensions and one dimensional crystalline cohomology}, Lecture Notes in Mathematics, Vol. 370. Springer-Verlag, Berlin-New York, 1974.

\bibitem[Mes07]{Mes07}
W. Messing, \emph{Travaux de Zink}, S\'{e}minaire Bourbaki. Vol. 2005/2006. Ast\'{e}risque No. 311 (2007), Exp. No. 964, ix, 341--364.

\bibitem[Mil80]{Mil80}
J. Milne, \emph{\'{E}tale cohomology}, Princeton Mathematical Series, No. 33. Princeton University Press, Princeton, N.J., 1980.

\bibitem[Mum70]{Mum70}
D. Mumford, \emph{Abelian varieties}, Tata Institute of Fundamental Research Studies in Mathematics, Oxford University Press, London 1970

\bibitem[Nak97]{Nak97}
Y. Nakkajima, \emph{On infinitesimal liftings and degenerations of Hodge-de Rham spectral sequences}, J. Number Theory 64 (1997), no. 1, 1-12.

\bibitem[Nyg83]{Nyg83}
N. Nygaard, \emph{The Tate conjecture for ordinary $K3$ surfaces over finite fields}, Invent. Math. {\bf{74}} (1983), no. 2, 213--237.

\bibitem[Ogu94]{Ogu94}
A. Ogus, \emph{$F$-crystals, Griffiths transversality, and the Hodge decomposition}, Ast\'{e}risque No. 221 (1994).

\bibitem[Sch03]{Sch03}
S. Schr\"{o}er, \emph{The $T^{1}$-lifting theorem in positive characteristic}, J. Algebraic Geom. {\bf{12}} (2003), no. 4, 699--714.

\bibitem[Sch04]{Sch04}
S. Schr\"{o}er, \emph{Some Calabi-Yau threefolds with obstructed deformations over the Witt vectors}, Compos. Math. {\bf{140}} (2004), no. 6, 1579--1592. 

\bibitem[Suw93]{Suw93}
N. Suwa, \emph{Hodge-Witt cohomology of complete intersections}, J. Math. Soc. Japan 45 (1993), no. 2, 295--300.

\bibitem[Tok96]{Tok96}
K. Toki, \emph{Fermat varieties of Hodge-Witt type}, J. Algebra 180 (1996), no. 1, 136--155.

\bibitem[Yu09]{Yu09} J.-D. Yu, \emph{Variation of the unit root along the Dwork family of Calabi-Yau varieties}, Math. Ann. {\bf{343}} (2009), no. 1, 53--78. 

\bibitem[Zin02]{Zin02} T. Zink, \emph{The display of a formal $p$-divisible group}, in: Cohomologie $p$-adique et applications arithmetiques, Asterisque 278, 127--248 (2002).

\end{thebibliography}
\end{document}